\def\ps@pprintTitle{%
	\let\@oddhead\@empty
	\let\@evenhead\@empty
	\def\@oddfoot{}%
	\let\@evenfoot\@oddfoot}
\pgfplotsset{compat=newest}
\newtheorem{thm}{Theorem}
\newtheorem{prop}[thm]{Proposition}
\newtheorem{remark}{Remark}
\begin{document}
	
\begin{frontmatter}
\title{Efficient and certified solution of parametrized one-way coupled problems through DEIM-based data projection across non-conforming interfaces}
		
\author[1]{Elena Zappon} %\corref{cor1}}
\ead{elena.zappon@polimi.com}
\author[1]{Andrea Manzoni}
\ead{andrea1.manzoni@polimi.com}
\author[1,2]{Alfio Quarteroni}
\ead{alfio.quarteroni@polimi.it}
		
%\cortext[cor1]{Corresponding author}
		
\address[1]{MOX - Dipartimento di Matematica, Politecnico di Milano, P.zza Leonardo da Vinci 32, I-20133 Milano, Italy}
		
\address[2]{Institute of Mathematics, Ecole Polytechnique Federale de Lausanne, Station 8,  CH-1015 Lausanne, Switzerland (Professor Emeritus)}
		
\begin{abstract}
One of the major challenges of coupled problems is to manage nonconforming meshes at the interface between two models and/or domains, due to different numerical schemes or domains discretizations employed.
Moreover, very often complex submodels depend on (e.g., physical or geometrical) parameters. Understanding how outputs of interest are affected by parameter variations thus plays a key role to gain useful insights on the problem's physics; however, expensive repeated solutions of the problem using high-fidelity, full-order models are often unaffordable. 
In this paper, we propose a parametric reduced order modeling (ROM) technique for parametrized one-way coupled problems made by a first independent model, the \emph{master model}, and a second model, the \emph{slave model}, that depends on the master model through Dirichlet interface conditions. We combine a reduced basis (RB) method, applied to each subproblems, with the discretized empirical interpolation method (DEIM) to efficiently interpolate or project Dirichlet data across conforming and non-conforming meshes at the domains interface, building a low-dimensional representation of the overall coupled problem. The proposed technique is then numerically verified by considering a series of test cases involving both steady and unsteady problems, and deriving \emph{a-posteriori} error estimates on the solution of the coupled problem in both cases. 
This work arises from the need to solve staggered cardiac electrophysiological models and represents the first step towards the setting of ROM techniques for the more general two-way Dirichlet-Neumann coupled problems solved with domain decomposition sub-structuring methods, when interface non-conformity is involved.\end{abstract}
\begin{keyword}
Coupled models, Reduced order models, Discrete empirical interpolation, Interface non-conformity, \emph{a-posteriori} error estimates
\end{keyword}
\end{frontmatter}

\section{Introduction}

Fast simulation techniques for multi-scale and multi-physics problems are nowadays of key relevance in many field of applied science and engineering (such as, e.g., in biomedicine, naval and aeronautical engineering, ranging from fluid-structure interaction (FSI) problems to electro-mechanical (EM) couplings \cite{Bazilevs2013,Discacciati2009,Korvink2005,Piersanti2021,QuarteroniDede2019,Wong2013,zhao2018computational}). Often, these problems are characterized by two (or more) nonlinear partial differential equations (PDEs) representing detailed parametric physical systems interacting through their boundaries. When a full-order model (FOM) based, e.g., on the finite element (FE) method, is used, solving accurately such coupled systems becomes computationally demanding, especially when strong constraints on spatial mesh sizes or on time steps must be imposed to deal with steep fronts solutions or fast dynamics.

In several cases, such coupled problems describe complex phenomena through physical and geometrical parameters dependency. Very often, gaining useful insights of the prescribed physics relies upon expensive repeated solutions of the model at hand, to assess how outputs of interest are affected by the parameters variation \cite{Bonomi2017,FortiRozza2014,Fresca2021,Geneser2008,Pacciarini2015,Pagani2018,Swenson2011}. In this respect, numerical simulations carried out by high-fidelity FOMs may easily become out of reach. 
In addition, when dealing with nonconforming meshes at the interface, special techniques, \emph{e.g} MORTAR method and INTERNODES \cite{Bernardi2005,Chan1996,Deparis2016,Gervasio2019,Hesch2014, QuarteroniValli1999}, need to be employed, making the corresponding FOM even more computationally demanding. 

In these contexts, the application of efficient reduced order models (ROMs) can be successfully applied to decrease the overall computational costs. Preliminary studies on reduced coupled problems, especially in a FSI context, were carried out in \cite{Ripepi2018,Amsallem2010,Ballarin2017,Lassila2012,Lassila2013a}, where ROMs have been applied to one or both subproblems, considering splitting schemes to handle the model coupling, while in \cite{Ballarin2016} POD is used to reduced an FSI problem solved with a monolithic FE scheme. However, geometrical and numerical interface conformity between the fluid and solid domains is always necessary to set up the numerical schemes.

Domain decomposition techniques coupled with reduced basis (RB) methods \cite{Benner2017, Hesthaven2016,QuarteroniManzoniNegri2016}, instead, have been explored in several works, \emph{e.g} \cite{Lovgren2006,IAPICHINO201263, IAPICHINO2016408,PEGOLOTTI2021113762}, trying to solve expensive models set on involved  domains by splitting the considered geometries in generic building blocks, often exploiting their topological similarity, and applying the reduction locally, in the context of each building block. The final global approximation is then computed gluing together the local solutions through different techniques, \emph{e.g.} Lagrangian multipliers or by Fourier basis functions. Moreover, RB-DD methods have been considered to construct efficient preconditioners \cite{santo2018multi} or to perform static condensation \cite{Eftang2014,Huynh2013}.  

In this paper, we propose a RB method to solve efficiently one-way coupled problems. Precisely, we consider two parametric second order elliptic and/or parabolic models defined on two domains with a common interface. The first model, the \emph{master model}, is solved as an independent model, imposing homogeneous Neumann boundary conditions at the interface. 
Instead, the second model, the \emph{slave model}, is dependent from the master model through Dirichlet interface conditions, \emph{i.e.} the slave solution at the interface is equal to the master solution.
Note that the Dirichlet interface conditions inherit naturally the parameters dependency from the master model. As high-fidelity FOM, we consider the finite element method involving either conforming or non-conforming discretizations at the interface domains. In particular, considering the high fidelity FE discretization, the slave solution at the interface corresponds to the master one in the conforming case while, in the non-conforming one, the slave interface solution must be computed with an interpolation method \emph{e.g.} relying on Radial Basis Functions. 

Then, we implement RB methods to reduce completely the parametric one-way coupled problem, including the interface conditions. To this end, we consider a modular approach: the two sub-problems are reduced independently through a POD-Galerkin approach, while the interface data -- and, therefore, the coupling -- are handled by setting a further efficient interpolation -- or projection -- stage relying on the discrete empirical interpolation method (DEIM) \cite{Barrault2004,Chaturantabut2009,Chaturantabut2010,Grepl2007,Maday2008,Negri2015}. In particular, we show the possibility to use this reduction paradigm to transfer Dirichlet data across both conforming and non-conforming domain interface, effectively replacing in the reduced problem any high fidelity interpolation technique with the DEIM. 

Differently from previous works, this new approach is able to decrease the overall computational costs of solving parametric one-way coupled problems through RB methods, including the possibility to consider interface grid non-conformity. The interface Dirichlet data can be easily transferred between the two problems without implementing other techniques that can increase the algebraic system dimensions, \emph{e.g.} this is the case of the Lagrange multipliers. Furthermore, several test cases and the derived \emph{a-posteriori} error estimate show that the modular approach allows to have full control of the solution accuracy in each reduction step, and to tailor the corresponding reduced order model on every sub-problems. The proposed technique can be seen as a reduced form of the DD method applied to one-way coupled problems, \emph{i.e.} the two involved models are handled as independent models and solved accordingly in sequence. 

The present work arises from the need of solving staggered cardiac electrophysiological models, \emph{i.e.} bidomain-torso models \cite{Boulakia2010,Boulakia2011}, and is a preliminary investigation toward the solution of a more general (and challenging) parametrized two-way coupled problem obtained through Dirichlet-Neumann interface conditions. In particular, the DEIM based interface reduction can be extended also to the Neumann conditions. Therefore, the described modular ROM can be used when domain decomposition sub-structuring methods \cite{QuarteroniValli1999,Bjorstad2018} are applied to steady or unsteady two-way coupled problems, in case of both conforming or non-conforming interface grids, and will be the focus of a future manuscript.
 
The structure of the paper is as follows: the formulation of the parametrized one-way coupled problem is summarized in Section$~\ref{Sub_FOM_coupled}$. In Section $\ref{Sec:high_fidelity_dis}$ we describe the high fidelity discretization prior to the presentation of the reduced formulation for each problems in Section$~\ref{Sub_ROM_subproblems}$, while the treatment of the interface data is described in Section$~\ref{Sub_ROM_interface}$. Section$~\ref{Sub_error_estimate}$ presents a posteriori error estimate of the proposed techniques. All the theoretical formulations and the error estimates are defined for both steady and unsteady problems in the respective Subsections. FOM and ROM solutions are then compared by means of simple numerical test cases in Section$~\ref{Sub_numerical_test}$. Conclusions and perspectives follow in the final Section of this work.

\section{Problem formulation}
\label{Sub_FOM_coupled}

In this Section we introduce a general formulation of the parametrized one-way coupled problems, treating separately the steady and the unsteady cases.  To better highlight the components of the proposed strategy, and its versatility, we pursue an algebraic formulation of the problems, assuming to deal with the finite element method as high-fidelity FOM. Indeed, the proposed strategy, relying on {\em (i)} a projection-based ROM built through the RB method, and {\em (ii)} the discrete empirical interpolation method, is independent of the employed FOM, however it can be easily described in an algebraic form. 

Let us consider a \textit{d}-dimensional domain (\textit{d} = 2,3) partitioned into two non-overlapping subdomains $\Omega_1$ and $\Omega_2$, sharing  a common interface $\Gamma := \overline{\Omega}_1 \cap \overline{\Omega}_2$, and denote by $\mathbf{n}_i$, $i = 1,2$ the outer unit normal directions of the two domains with respect to the interface $\Gamma$. Hereon, to simplify the notation, we call \emph{master model} the problem set in $\Omega_1$, and \emph{slave model} the one set in $\Omega_2$; correspondingly, we refer to their solutions as to the \emph{master solution} and the \emph{slave solution}, respectively.

\subsection{Steady case}
We first consider a steady time independent coupled problem; the unsteady counterpart will be described in the following Subsection. As an abstract instance of parameter dependent models set over each subdomain, we consider the following ones: given two sets of parameters $\boldsymbol{\mu}_1 \in \mathscr{P}^{d_1}$ and $\boldsymbol{\mu}_2 \in \mathscr{P}^{d_2}$, $d_1$, $d_2 \geq 1$, and two functions $\mathbf{f}_1(\boldsymbol{\mu}_1)$ and $\mathbf{f}_2(\boldsymbol{\mu}_2)$ defined on $\Omega_i$, $i = 1,2$  respectively, we look for $\mathbf{u}_1$ in $\Omega_1$ and $\mathbf{u}_2$ in $\Omega_2$ such that
\begin{equation}
	\label{Eq_probl11}
	\begin{cases}
		\mathcal{L}_1(\boldsymbol{\mu}_1) \mathbf{u}_1(\boldsymbol{\mu}_1) = \mathbf{f}_1 &\text{in } \Omega_1 \\
		\text{\textbf{BCs}}(\boldsymbol{\mu}_1) &\text{on }\partial \Omega_1\backslash \Gamma\\
		\frac{\partial \mathbf{u}_1(\boldsymbol{\mu}_1)}{\partial n}= 0 &\text{on }\Gamma,
	\end{cases}
\end{equation}
and
\begin{equation}
	\label{Eq_probl21}
	\begin{cases}
		\mathcal{L}_2(\boldsymbol{\mu}_2) \mathbf{u}_2(\boldsymbol{\mu}_2) = \mathbf{f}_2 &\text{in } \Omega_2 \\
		\text{\textbf{BCs}}(\boldsymbol{\mu}_2) &\text{on }\partial \Omega_2\backslash \Gamma\\
		\mathbf{u}_2(\boldsymbol{\mu}_2) = \mathbf{u}_1(\boldsymbol{\mu}_1) &\text{on }\Gamma.
	\end{cases}
\end{equation} 
Here, $\mathcal{L}_1$ and $\mathcal{L}_2$ denote two second order elliptic operators; several examples will be provided in Section $\ref{Sub_numerical_test}$. More specifically, problems $\eqref{Eq_probl11}$ and $\eqref{Eq_probl21}$ can represent either two different physical systems, or the multi-domain form of the same system. The master model is made independent from the slave one imposing homogeneous Neumann boundary conditions on $\Gamma$, while Dirichlet boundary conditions 
\begin{equation}
\label{eq_interface_dirichlet_general}
\mathbf{u}_2(\boldsymbol{\mu}_2) = \mathbf{u}_1(\boldsymbol{\mu}_1) \quad \text{on }\Gamma,
\end{equation}
are applied at the interface of problem $\eqref{Eq_probl21}$.
The Dirichlet boundary conditions $\eqref{eq_interface_dirichlet_general}$ are, then, unidirectional because they express the imposition of the continuity of the solution from the first to the second model.

One-way coupled problems such as this can be, for examples, the results of a partitioned scheme's splitting operation on a two-way coupled problem featured by Dirichlet-Neumann interface conditions. 
In this paper, \emph{(i)} we  solve the master model with FE method, then \emph{(ii)} we extract the master solution at the interface domains and  \emph{(iii)} we use the gained data as Dirichlet boundary conditions to solve the slave model with FE method, too. This procedure is affordable if the grids used are conforming at the interface of the two domains, however it usually involves the application of methods such as the MORTAR \cite{Bernardi2005,Hesch2014,QuarteroniValli1999,Bernardi1994} or the INTERNODES \cite{Deparis2016,Gervasio2019,Gervasio2018,Gervasio20182} methods in the non-conforming case, which might become very expensive especially for three dimensional domains. \\

Given the parameterized nature of the problem, the solutions $\mathbf{u}_i$ can be seen as two maps $\mathbf{u}_1 : \mathscr{P}^{d_1} \rightarrow V_1$ and $\mathbf{u}_2 : \mathscr{P}^{d_2} \rightarrow V_2$ that to any $\boldsymbol{\mu}_1 \in \mathscr{P}^{d_1}$ and $\boldsymbol{\mu}_2 \in \mathscr{P}^{d_2}$ associate the solutions $\mathbf{u}_1(\boldsymbol{\mu}_1)$ and $\mathbf{u}_2(\boldsymbol{\mu}_2)$ to the corresponding functional spaces $V_1$ and $V_2$.  Let us choose
\begin{equation}
V_i = H^1_{\partial \Omega_{i,D}}(\Omega_i) := \{ \mathbf{v} \in H^1(\Omega_i): ~ \mathbf{v}_{\mid \partial \Omega_{i,D}} = 0 \}, ~ i = 1,2
\end{equation} 
being $\partial \Omega_{i,D}$ suitable disjoint subsets of $\partial\Omega_i\backslash \Gamma$ for Dirichlet boundary conditions of problems$~\eqref{Eq_probl11}$ and$~\eqref{Eq_probl21}$, respectively. Then, let us introduce the bilinear and linear forms $a_i(\cdot,\cdot;\boldsymbol{\mu}_i) : V_i \times V_i \rightarrow \mathbb{R}$ for each $\boldsymbol{\mu}_i \in \mathscr{P}^{d_i}$, and $\mathcal{F}_i(\cdot;\boldsymbol{\mu}_i) : V_i \rightarrow \mathbb{R}$ for each $\boldsymbol{\mu}_i \in \mathscr{P}^{d_i}$, $i = 1,2$, such that the weak formulations of problems$~\eqref{Eq_probl11}$ and$~\eqref{Eq_probl21}$ reads: find $\mathbf{u}_i(\boldsymbol{\mu}_i) \in H^1(\Omega_i)$ such that
\begin{equation}
\label{weak_FOM}
a_i(\mathbf{u}_i(\boldsymbol{\mu}_i),\mathbf{v}_i;\boldsymbol{\mu}_i) = \mathcal{F}_i(\mathbf{v}_i;(\boldsymbol{\mu}_i)) \qquad \forall\mathbf{v}_i \in V_i ,
\end{equation}
for each $i=1,2$. For instance, if 
$$ \mathcal{L}_i(\boldsymbol{\mu}_i) \mathbf{u}_i(\boldsymbol{\mu}_i) = - \nabla \cdot(\mathbf{g}_i(\boldsymbol{\mu}_i)\nabla \mathbf{u}_i(\boldsymbol{\mu}_i))$$
with $\mathbf{g}_i(\boldsymbol{\mu}_i)$ a suitable functions in $\Omega_i$ expressing a parametrized diffusion coefficient, the bilinear forms can be expressed as 
$$a_i(\mathbf{u}_i(\boldsymbol{\mu}_i),\mathbf{v}_i;\boldsymbol{\mu}_i) = \int_{\Omega_i} (\mathbf{g}_i(\boldsymbol{\mu}_i)\nabla \mathbf{u}_i(\boldsymbol{\mu}_i)\cdot \nabla \mathbf{v}_i)d\Omega_i. 
$$
Moreover, denoting by $(\cdot,\cdot)_{L^2(\Omega_i)}$ the inner product in $L^2(\Omega_i)$, $i = 1,2$, we set 
$$\mathcal{F}_i(\mathbf{v}_i;\boldsymbol{\mu}_i) = (\mathbf{f}_i(\boldsymbol{\mu}_i),\mathbf{v}_i)_{L^2(\Omega_i)} \text{ plus Neumann terms on }\partial\Omega_{i}\backslash\Gamma \quad \forall \mathbf{v}_i \in H^1(\Omega_i).$$
Hereon, we will consider only homogeneous Neumann boundary conditions on the $\partial\Omega_{i}\backslash\Gamma$ while, in presence of Dirichlet boundary conditions, a lifting technique can be applied. Independently from the nature of the problems, in what follows we assume that the solution of$~\eqref{Eq_probl11}$ and$~\eqref{Eq_probl21}$ exists and is unique for each $\boldsymbol{\mu}_i \in \mathscr{P}^{d_i}$.

\subsection{Unsteady case}
With the same notation of $\eqref{Eq_probl11}$ and $\eqref{Eq_probl21}$, we consider the following time-dependent models: assuming that  $\boldsymbol{\mu}_1 \in \mathscr{P}^{d_1}$ and $\boldsymbol{\mu}_2 \in \mathscr{P}^{d_2}$, $d_1$, $d_2 \geq 1$ are two set of time-independent parameters, $\mathbf{f}_1(t;\boldsymbol{\mu}_1)$, $\mathbf{f}_2(t;\boldsymbol{\mu}_2)$ are two time-dependent functions defined on $\Omega_i \times \{0,T\}$, $i = 1,2$  respectively, and $\mathcal{L}_1$ and $\mathcal{L}_2$ are second order elliptic operators, we look for $\mathbf{u}_1(t;\boldsymbol{\mu}_1)$ in $\Omega_1\times \{0,T\}$ and $\mathbf{u}_2(t;\boldsymbol{\mu}_2)$ in $\Omega_2\times \{0,T\}$ such that
\begin{equation}
\label{Eq_probl11_time}
\begin{cases}
\frac{\partial \mathbf{u}_1(t;\boldsymbol{\mu}_1)}{\partial t} +  \mathcal{L}_1(\boldsymbol{\mu}_1) \mathbf{u}_1(t;\boldsymbol{\mu}_1) = \mathbf{f}_1(t;\boldsymbol{\mu}_1) &\text{in } \Omega_1 \times \{ 0,T\}\\
\text{BCs}(t;\boldsymbol{\mu}_1) &\text{on }\partial \Omega_1\backslash \Gamma \times \{ 0,T\}\\
\mathbf{u}_1(0;\boldsymbol{\mu}_1) = \mathbf{u}_{1,0}(\boldsymbol{\mu}_1) &\text{on } \Omega_1 \times \{0\},
\end{cases}
\end{equation}
and
\begin{equation}
\label{Eq_probl21_time}
\begin{cases}
\frac{\partial \mathbf{u}_2(t;\boldsymbol{\mu}_2)}{\partial t} + \mathcal{L}_2(\boldsymbol{\mu}_2) \mathbf{u}_2(t;\boldsymbol{\mu}_2) = \mathbf{f}_2(t;\boldsymbol{\mu}_2) &\text{in } \Omega_2 \times \{0,T\} \\
\text{BCs}(t; \boldsymbol{\mu}_2) &\text{on }\partial \Omega_2\backslash \Gamma \times\{0,T\}\\
\mathbf{u}_2(0;\boldsymbol{\mu}_2) = \mathbf{u}_{2,0}(\boldsymbol{\mu}_2) &\text{on } \Omega_2 \times \{0\},
\end{cases}
\end{equation}
where $T>0$ represent the final time. As in Section $\ref{Sub_FOM_coupled}$, we consider homogeneous Neumann interface conditions for the master models and Dirichlet interface conditions as coupling conditions on the slave model:
$$ \mathbf{u}_2(t;\boldsymbol{\mu}_2) = \mathbf{u}_1(t;\boldsymbol{\mu}_1) \qquad \text{on }\Gamma.$$
Note that the time variable $t$ has been added to account for the time dependency of the solution, while the spatial variable $\mathbf{x}$ is implicitly considered.

\begin{remark}The differential operators $\mathcal{L}_1$ and $\mathcal{L}_2$  here considered are time independent; however, the method presented in this work can be easily adapted to the case of time dependent partial differential operators, too.\end{remark}

\begin{remark}An unsteady coupled problem can also be obtained coupling a time-dependent problem in $\Omega_1$ and a time-independent problem in $\Omega_2$, or viceversa. For example, if a time-dependent model is coupled in a one way sense with a time-independent model, the slave model becomes time-dependent through the Dirichlet boundary conditions received from the master model (see the test case \emph{ii} of Section $\ref{Subsec:test_case_ii}$).\end{remark}

The same consideration of resolution affordability can be stated as for the steady case.  Time-dependent models such as this, in fact, can be equally solved computing, for each time instant, \emph{(i)} the master solution, \emph{(ii)} extracting the Dirichlet interface conditions from the master model and \emph{(iii)} using them to solve the slave model. Also in this case, we set ROM strategies for unsteady models starting from their discretized form with FE methods. Therefore, we will introduce the weak formulation of $\eqref{Eq_probl11_time}$ and $\eqref{Eq_probl21_time}$. 

In this case, for each $t>0$, the solutions of the two subproblems can be seen as two maps $\mathbf{u}_1: \mathscr{P}^{d_1} \rightarrow V_1$ and $\mathbf{u}_2 : \mathscr{P}^{d_2} \rightarrow V_2$ that to any $\boldsymbol{\mu}_1 \in \mathscr{P}^{d_1}$ and $\boldsymbol{\mu}_2 \in \mathscr{P}^{d_2}$ associate the solutions $\mathbf{u}_1(\boldsymbol{\mu}_1)$ and $\mathbf{u}_2(\boldsymbol{\mu}_2)$. Therefore, we seek for $\mathbf{u}_i(t;\boldsymbol{\mu}_i) \in V_i$ such that \begin{equation}
\label{Eq:unsteady_weak_form}
\int_{\Omega_i} \frac{\partial \mathbf{u}_i(t;\boldsymbol{\mu}_i)}{\partial t}\mathbf{v}_i d\Omega + a_i(\mathbf{u}_i(t;\boldsymbol{\mu}_i),\mathbf{v}_i;\boldsymbol{\mu}_i) = \int_{\Omega_i}\mathbf{f}_i(t;\boldsymbol{\mu}_i)\mathbf{v}_i d \Omega_i \quad \forall \mathbf{v}_i \in V_i.\end{equation}
Here $\mathbf{u}_i(0;\boldsymbol{\mu}_i) = \mathbf{u}_{i,0}(\boldsymbol{\mu}_i)$ and $a_i(\cdot,\cdot;\boldsymbol{\mu}_i)$ is the bilinear form associated to the linear operator $\mathcal{L}_i$. As before, Neumann terms can appear in the formulation according to the problem considered, while in presence of Dirichlet boundary conditions, the lifting method can be implemented. Hereon, we assume that the solution of the proposed problems exists and is unique for each parameter instance and each time $t>0$.

\section{High fidelity discretization}
\label{Sec:high_fidelity_dis}
The RB method proposed in this work aims at reducing the computational costs of solving a parametrized one-way coupled problem, for instance when multiple queries with different parameters values are required; in principle, it can be applied in case of both conforming or non-conforming grids at the domains interface. 
For the sake of generality, here we consider interface non-conformity through two a-priori independent discretizations on the two domains, with two families of triangulations $\mathcal{T}_{h_1} = \cup_m T_{1,m}$ in $\Omega_1$ and $\mathcal{T}_{h_2} = \cup_m T_{2,m}$ in $\Omega_2$, respectively. For instance, one of the two meshes can be made by simplices (triangles or tetrahedra) and the other by quads (quadrilaterals of hexahedra), or both can be made by the same kind of elements, however featuring different mesh sizes $h_1$ and $h_2$. Moreover, different polynomial degrees $p_1$ and $p_2$ can be used to define the finite element spaces. In the rest of the formulation, we will consider only the use of quads since they are the elements used in all the test cases presented in Section \ref{Sub_numerical_test}.

Additionally, we denote by  $\Gamma_1$ and $\Gamma_2$ the internal interfaces of $\Omega_1$ and $\Omega_2$, respectively, induced by the triangulations $\mathcal{T}_{h_1}$ and $\mathcal{T}_{h_2}$. Note that $\Gamma_1 = \Gamma_2 = \Gamma$ if the interface $\Gamma$ is a straight segment (for $d=2$) or a plane (for $d=3$), otherwise $\Gamma_1$ and $\Gamma_2$ can also be different. In case of non-conforming grids at the interface, we have signal interpolation if $\Gamma_1 = \Gamma_2$ or signal projection if $\Gamma_1 \not= \Gamma_2$ (see Fig. $\ref{Fig:Interfaces}$).

\begin{figure}
	\hspace{1.25cm}
	\begin{subfigure}{.3\textwidth}
		\centering
		\includegraphics[width=1\textwidth]{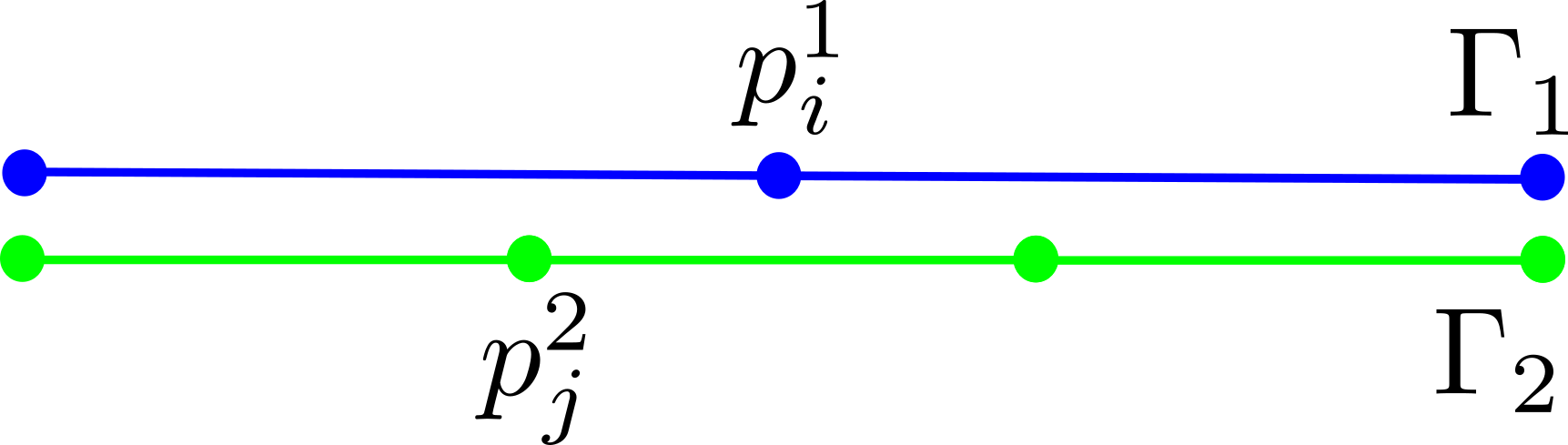}
	\end{subfigure}
	\hspace{1cm}
	\begin{subfigure}{.3\textwidth}
		\centering
		\includegraphics[width=1.5\textwidth]{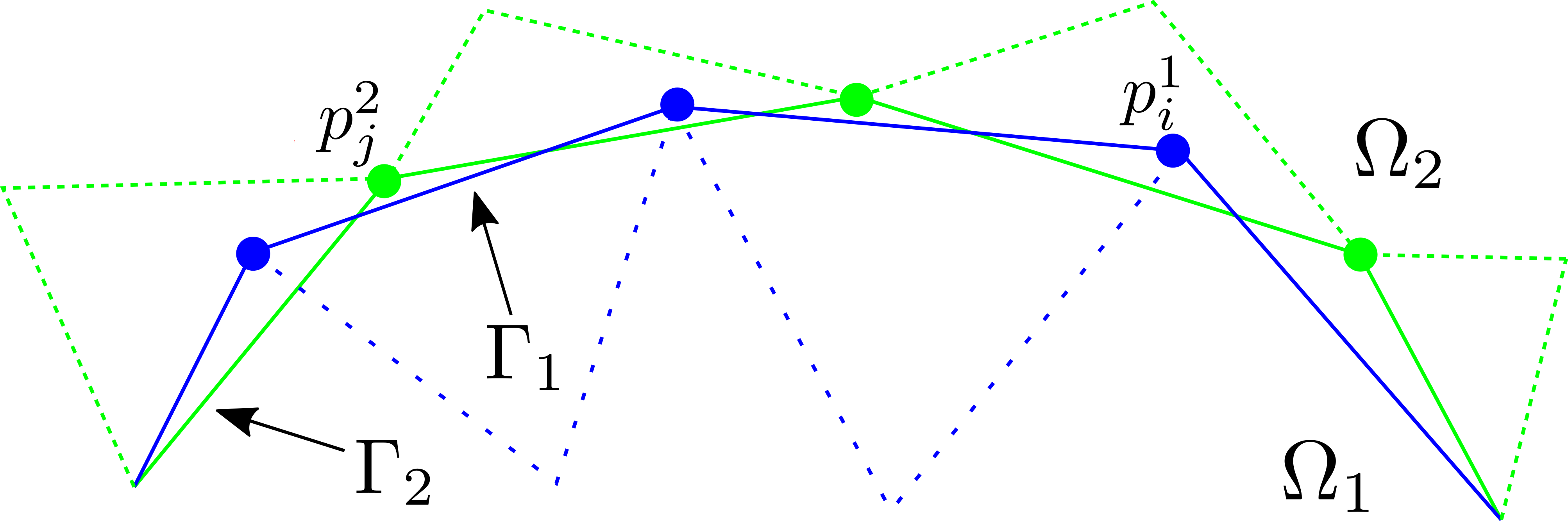}
	\end{subfigure}
	\caption{$\Gamma_1$ and $\Gamma_2$ obtained through the triangulations $\mathcal{T}_{1,h_1}$ and $\mathcal{T}_{2,h_2}$. On the left, since $\Gamma_1 = \Gamma_2 = \Gamma$, we do an interpolation; on the right, since $\Gamma_1 \not =  \Gamma_2$, we do a projection of the Dirichlet interface matching.}
	\label{Fig:Interfaces}
\end{figure}

\subsection{Steady case}
In the steady case, for each partition $\mathcal{T}_{h_i}$, we first define the finite element approximation spaces
\begin{equation}
	\label{eq_X_i}
X^{q_i}_{h_i} = \{ \mathbf{v} \in C^0(\overline{\Omega_i}): ~\mathbf{v}_{\mid T_{i,m}} \in \mathcal{Q}_{q_i}, ~ \forall T_{i,m} \in \mathcal{T}_{h_i}\}, \quad i = 1,2
\end{equation}
where $q_i$ is a chosen integer and $\mathcal{Q}_{q_i}$ represents the elements of the quads space.  

Then, considering the following finite dimensional spaces
\begin{equation}
	\label{eq_V_i}
V_{h_i} = \{\mathbf{v} \in X_{h_i}^{q_i} : ~\mathbf{v}_{\mid \partial \Omega_{i,D}} = 0 \}, \quad i = 1,2,
\end{equation}
with an abuse of notation on $\mathbf{u}_i$, we can write the Galerkin FE  approximation of weak problems$~\eqref{weak_FOM}$ as: find $\mathbf{u}_{i}(\boldsymbol{\mu}_i) \in X^{q_i}_{h_i}$ such that
\begin{equation}
\label{descrete_variationa_problems}
a_i(\mathbf{u}_{i}(\boldsymbol{\mu}_i),\mathbf{v}_{i};\boldsymbol{\mu}_i) = \mathcal{F}(\mathbf{v}_{i};\boldsymbol{\mu}_i) \quad \forall \mathbf{v}_{i} \in V_{h_i}.
\end{equation}
Moreover, we denote by $N_i$ the dimension of $V_{h_i}$, $i=1,2$. Each of the two problems in $\eqref{descrete_variationa_problems}$ yields the solution of a linear system of equations. Indeed, if for each $i = 1,2$ we define a set of basis functions $\{\boldsymbol{\varphi}_i^{(j)}\}_{j=1}^{N_i} $ for the finite-dimensional space $V_{h_i}$, then $\mathbf{v}_{i}$ can be represented in a unique form as 
\begin{equation}
\label{Eq:finite_discretization} \mathbf{v}_{i} = \sum^{N_i}_{j=1}\mathbf{v}_{i}^{(j)}\boldsymbol{\varphi}_i^{(j)} \quad \text{with } \mathbf{v}_i = (\mathbf{v}_{i}^{(1)},\dots,\mathbf{v}_{i}^{(N_i)})^{T} \in \mathbb{R}^{N_i}.\end{equation}
Then, setting $\mathbf{u}_{i}(\boldsymbol{\mu}_i) = \sum^{N_i}_{j=1}\mathbf{u}_{i}^{(j)}(\boldsymbol{\mu}_i)\boldsymbol{\varphi}_i^{(j)}$ and by $\mathbf{u}_{N_i}(\boldsymbol{\mu}_i)$ the vector containing the unknown coefficients $\mathbf{u}_{i}^{(j)}(\boldsymbol{\mu}_i)$, problem$~\eqref{descrete_variationa_problems}$ is equivalent to: find $\mathbf{u}_i(\boldsymbol{\mu}_i) \in \mathbb{R}^{N_i}$ such that
$$ \sum^{N_i}_{j=1} a_i(\boldsymbol{\varphi}_i^{(j)},\boldsymbol{\varphi}_i^{(k)};\boldsymbol{\mu}_i)  \mathbf{u}_{i}^{(j)}(\boldsymbol{\mu}_i) = \mathcal{F}_i(\boldsymbol{\varphi}_i^{(k)};\boldsymbol{\mu}_i) \quad \forall i = 1,\dots,N_i,
$$
that corresponds to
\begin{equation}
\label{FOM_discretization_system}
\mathbb{A}_{N_i}(\boldsymbol{\mu}_i) \mathbf{u}_{N_i}(\boldsymbol{\mu}_i) = \mathbf{f}_{N_i}(\boldsymbol{\mu}_i)
\end{equation}
where $\mathbb{A}_{N_i}(\boldsymbol{\mu}_i) \in \mathbb{R}^{N_i \times N_i}$ is the stiffness matrix with elements $(\mathbb{A}_{N_i})_{kj}(\boldsymbol{\mu}_i) = a_i(\boldsymbol{\varphi}_i^{(j)},\boldsymbol{\varphi}_i^{(k)};\boldsymbol{\mu}_i)$ and $\mathbf{f}_{N_i}(\boldsymbol{\mu}_i) \in \mathbb{R}^{N_i}$ is the vector with elements $(\mathbf{f}_{N_i})_k(\boldsymbol{\mu}_i) = \mathcal{F}_i(\boldsymbol{\varphi}_i^{(k)};\boldsymbol{\mu}_i)$, $j,k=1,\dots,N_i$, $i =1,2$. Hereon, with a slight abuse of notation, we will write $\mathbf{u}_{N_i}(\boldsymbol{\mu}_i) = \mathbf{u}_i(\boldsymbol{\mu}_i)$.
  
Finally, at the interface of the two domains, we prescribe Dirichlet boundary conditions, meaning that 
$$ \mathbf{u}_{2_{\mid \Gamma}}(\boldsymbol{\mu}_2) = \mathbf{u}_{1_{\mid \Gamma}}(\boldsymbol{\mu}_1)$$
if the two interfaces are conforming, or 
$$ \mathbf{u}_{2_{\mid \Gamma_2}}(\boldsymbol{\mu}_2) = \Pi \mathbf{u}_{1_{\mid \Gamma_1}}(\boldsymbol{\mu}_1)$$
in case of nonconforming interfaces, where $\Pi$ is a suitable interpolation operator. In Section $\ref{Sub_numerical_test}$, such $\Pi$ is a linear interpolation operator, \emph{i.e.}, we interpolate the high fidelity Dirichlet interface data through a linear interpolation algorithm based on VTK function \cite{Schroeder}. 

\begin{remark} Note that the reduced order technique here presented can be applied regardless of the high fidelity interpolation scheme considered. Moreover, in the online computation of the ROM, such high fidelity interpolation is substituted by the DEIM. Thus, in principal, any interpolation method can be considered in the high fidelity formulation, according to the precision required by the solution, \emph{e.g.} INTERNODES method define $\Pi$ by means of Rescaled Localized Radial Basis Functions (RL-RBF) \cite{Gervasio2019,Deparis2014ARL}.\end{remark}

\subsection{Unsteady case}
\label{Subsec:unsteady_probl}
The high-fidelity discretization of a time dependent problem yields a dynamical system, obtained through the spatial discretization of the corresponding weak form at each time instant $t$ \cite{QuarteroniValli1999}. Indeed,  considering the same FE approximation spaces $\eqref{eq_X_i}$ and finite dimensional spaces $\eqref{eq_V_i}$ -- with a slight  abuse of notation on $\mathbf{u}_i(t;\boldsymbol{\mu}_i)$ -- the Galerkin approximation of problem $\eqref{Eq:unsteady_weak_form}$ reads as: for each $t>0$, find $\mathbf{u}_{i}(t;\boldsymbol{\mu}_i) \in V_{h_i}$ such that
\begin{equation}
\label{Eq:high_fidelity_steady_form}\int_{\Omega_i}\frac{\partial \mathbf{u}_{i}(t;\boldsymbol{\mu}_i)}{\partial t} \mathbf{v}_{i} d\Omega_i + a_i(\mathbf{u}_{i}(t;\boldsymbol{\mu}_i),\mathbf{v}_{i};\boldsymbol{\mu}_i) = \int_{\Omega_i}\mathbf{f}_i(t;\boldsymbol{\mu}_i)\mathbf{v}_{i} d\Omega_i \quad \mathbf{v}_{i} \in V_{h_i},\end{equation}
with $\mathbf{u}_{i}(0;\boldsymbol{\mu}_i) = \mathbf{u}_{i,0}(\boldsymbol{\mu}_i)$. To obtain the algebraic form of $\eqref{Eq:high_fidelity_steady_form}$,
for each $t>0$ we express the Galerkin FE solution  as
$$ \mathbf{u}_{i}(\mathbf{x},t) = \sum^{N_i}_{j=1}\mathbf{u}_{i}^{(j)}(t)\boldsymbol{\varphi}_i^{(j)}(\mathbf{x}) \quad \text{with } \mathbf{u}_{N_i}(t) = (\mathbf{u}_{i}^{(1)}(t),\dots,\mathbf{u}_{i}^{(N_i)}(t))^{T} \in \mathbb{R}^{N_i};$$
then, it is straightforward to obtain the discretized finite element formulation: find $\mathbf{u}_{N_i}(t;\boldsymbol{\mu}) \in \mathbb{R}^{N_i}$ such that 
\begin{equation}  
\label{Eq:discretization_formula_time} 
\mathbb{M}_{N_i}\frac{d}{dt}\mathbf{u}_{N_i}(t;\boldsymbol{\mu}_i) + \mathbb{A}_{N_i}(\boldsymbol{\mu}_i)\mathbf{u}_{N_i}(t;\boldsymbol{\mu}_i) = \mathbf{f}_{N_i}(t;\boldsymbol{\mu}_i),\end{equation}
where $\mathbb{M}_{N_i} \in \mathbb{R}^{N_i \times N_i}$ is the mass matrix with component $(\mathbb{M}_{N_i})_{kj} = \int_{\Omega_i}\boldsymbol{\varphi}_i^{(j)}\cdot\boldsymbol{\varphi}_i^{(k)}d\Omega_i$.  
While the space dependency is treated with the FEM as in the steady case, the time-dependency can be handled using different numerical schemes \cite{QuarteroniValli1999,Brenan1995,Kreiss2014}, such as, \emph{e.g}, backward differentiation formulas (BDF). Introducing a discretization in time and a corresponding index $n$  to denote discrete time instant as $t^n = n\Delta t$, we get 
$$ \mathbf{u}_i^n(\boldsymbol{\mu}_i) \simeq \mathbf{u}_i(t^n;\boldsymbol{\mu}_i) \quad \forall n = 0, \dots, N_t,$$
and we can approximate the time derivative as
$$\frac{\partial \mathbf{u}_i^{n+1}(\boldsymbol{\mu}_i)}{\partial t} \simeq \frac{\mathbf{u}_i^{n+1}(\boldsymbol{\mu}_i) - \mathbf{u}_i^{n}(\boldsymbol{\mu}_i)}{\Delta t} \qquad \forall n=0,\dots,N_t-1.$$
Here $N_t$ is the total number of selected time instants and $\Delta t = \frac{T}{N_t}$ denotes the time step. Note that the same time discretization must be used in both subproblems. 

Therefore, coming back to the same high fidelity discretization of Section $\ref{Sec:high_fidelity_dis}$, in algebraic form, the time-dependent problems become: find $\mathbf{u}_{i}^{n+1}(\boldsymbol{\mu}_i) \in \mathbb{R}^{N_i}$ such that
\begin{equation}
\label{FOM_discretization_syste_time}
\begin{cases}
\left(\frac{\mathbb{M}_{N_i}}{\Delta t} +  \mathbb{A}_{N_i}(\boldsymbol{\mu}_i)\right) \mathbf{u}_{N_i}^{n+1}(\boldsymbol{\mu}_i) =  \mathbf{f}_{N_i}^{n+1}(\boldsymbol{\mu}_i) + \frac{\mathbb{M}_{N_i}}{\Delta t}\mathbf{u}_{N_i}^{n}(\boldsymbol{\mu}_i), &n = 0,\dots, N_t-1,\\
\mathbf{u}_{N_i}^{0}(\boldsymbol{\mu}_i) = \mathbf{u}_{N_i,0}(\boldsymbol{\mu}_i).  
\end{cases}
\end{equation}
		
Regarding the Dirichlet interface conditions, we end up with 
$$ \mathbf{u}_{2_{\mid \Gamma}}^{n+1}(\boldsymbol{\mu}_2) = \mathbf{u}_{1_{\mid \Gamma}}^{n+1} (\boldsymbol{\mu}_1)\quad \forall n = 0,\dots,N_t-1$$
if the two interfaces are conforming, or 
$$ \mathbf{u}_{2_{\mid \Gamma_2}}^{n+1}(\boldsymbol{\mu}_2) = \Pi \mathbf{u}_{1_{\mid \Gamma_1}}^{n+1}(\boldsymbol{\mu}_1)  \quad \forall n = 0,\dots,N_t-1$$
in the non-conforming interfaces case, where $\Pi$ is an interpolation operator. As for the steady case, in this work $\Pi$ represents a linear interpolation based on VTK function \cite{Schroeder}.

\section{Reduced order modeling}
\label{Sub_ROM_subproblems}
The proposed ROM strategy is a modular procedure aiming at reducing separately the three main parts of the one-way coupled problem considered, that is, the master model, the interface Dirichlet boundary conditions, and the slave model, relying on a POD-Galerkin-DEIM approach \cite{Benner2017, Hesthaven2016,QuarteroniManzoniNegri2016}. 
	
Thus, for both the master and the slave models, the goal is to approximate the FOM solution by means of a small number of basis functions (that is, a reduced basis) obtained from a set of FOM snapshots -- that is, solutions obtained through the FOM for selected values of the parameters -- while we seek for a low-dimensional representation of the parametric Dirichlet data. Also in this latter case, we therefore exploit a well chosen set of basis functions, starting from a set of snapshots of the Dirichlet data. The resulting reduction strategy to handle them can then be used as an efficient interpolation or projection method across the non-conforming interface grids. For the sake of notation, in this Section we introduce the reduction procedure used for both the master and slave problems, while a detailed description of the treatment of interface data will be addressed in the following Section. Note that the reduction of the slave problem depends not only on the number of basis functions introduced to approximate its solution, but also on the chosen approximation of the interface data. 

To simplify the notation, we remark that in this Section we denote by $\mathbf{u}_i$, $i = 1,2$, the algebraic representation of the models solutions, \emph{i.e.} $\mathbf{u}_{N_i}$, both in the steady and the unsteady case.

\subsection{Steady case}

Since the slave model is dependent on non-homogeneous Dirichlet conditions at least at the interface, a lifting technique must be applied. Therefore, we can express the slave solution as 
\[
\mathbf{u}_{2}(\boldsymbol{\mu}_2) = \tilde{\mathbf{u}}_{2}(\boldsymbol{\mu}_2) + \mathbf{u}_{2,D}(\boldsymbol{\mu}_2)
\]
 where $\mathbf{u}_{2,D}(\boldsymbol{\mu}_2)$ is the lifting vector such that $\mathbf{u}_{2_{\mid \partial\Omega_{2,D}}}(\boldsymbol{\mu}_2) = \mathbf{u}_{2,D}(\boldsymbol{\mu}_2)$. Then, the FOM slave model can be replaced by an equivalent FOM involving Dirichlet boundary conditions, in which the contribution of the Dirichlet data has been moved to the right hand side: find $\tilde{\mathbf{u}}_{2}(\boldsymbol{\mu}_2)$ such that
\begin{equation}
\label{Eq:slave_separate_bc}
\mathbb{A}_{N_2}(\boldsymbol{\mu}_2) \tilde{\mathbf{u}}_{2}(\boldsymbol{\mu}_2) = \mathbf{f}_{N_2}(\boldsymbol{\mu}_2) - \mathbb{A}_{N_2}\mathbf{u}_{2,D}(\boldsymbol{\mu}_2). 
\end{equation}

Following a standard POD-Galerkin approach (see, e.g., \cite{QuarteroniManzoniNegri2016}) during the offline stage, we define the set of snapshots for the master model as $\mathbf{S}_1 = \{ \mathbf{u}_{1}(\boldsymbol{\mu}_1^k) ,~\boldsymbol{\mu}_1^k \in \mathscr{P}^{d_1}\}$, and for the slave model considering $ \tilde{\mathbf{S}}_2 = \{\tilde{\mathbf{u}}_{2}(\boldsymbol{\mu}_2^k),~\boldsymbol{\mu}_2^k \in \mathscr{P}^{d_2}\}$, by solving the FOM problems$~\eqref{descrete_variationa_problems}$ for suitably chosen parameter values, sampled, e.g., through latin hypercube sampling \cite{Mckay1979,Iman2006}. 

For each set of snapshots, we construct a global reduced basis through POD and the corresponding matrix $\mathbb{V}_{i} \in \mathbb{R}^{N_i \times n_i}$, $n_i \ll N_i$, $i=1,2$, respectively, whose columns yield the obtained basis functions. To define the ROM, we rely on a Galerkin-RB strategy, that is, we project the original FOM onto the reduced spaces defined by $\mathbb{V}_{i}$. Thus, in the online stage, the approximation of the master solution can be sought under the approximated form 
\begin{equation}
\label{eq:reduced_master_solution}
\mathbf{u}_{1}(\boldsymbol{\mu}_1) \approx \mathbb{V}_1 \mathbf{u}_{n_1}(\boldsymbol{\mu}_1),
\end{equation} 
where $\mathbf{u}_{n_1}(\boldsymbol{\mu}_1)$ is the solution of the \emph{reduced master problem} 
$$ \mathbb{A}_{n_1}(\boldsymbol{\mu}_1)\mathbf{u}_{n_1}(\boldsymbol{\mu}_1) = \mathbf{f}_{n_1}(\boldsymbol{\mu}_1),$$
where $\mathbb{A}_{n_1}(\boldsymbol{\mu}_1) = \mathbb{V}_1^T \mathbb{A}_{N_1}(\boldsymbol{\mu}_1) \mathbb{V}_1$ and $\mathbf{f}_{n_1}(\boldsymbol{\mu}_1) = \mathbb{V}_1^T \mathbf{f}_{N_1}(\boldsymbol{\mu}_1)$.

Similarly, the approximation of the slave solution is given by
\begin{equation}\label{eq:slave_lifting_eq}\mathbf{u}_{2}(\boldsymbol{\mu}_2) \approx \mathbb{V}_2 \tilde{\mathbf{u}}_{n_2}(\boldsymbol{\mu}_2) + \mathbf{u}_{2,D}(\boldsymbol{\mu}_2),\end{equation}
where $\tilde{\mathbf{u}}_{n_2}(\boldsymbol{\mu}_2)$ is the solution of the \emph{reduced slave problem}
\begin{equation}
\label{eq:slave_reduced_model} \mathbb{A}_{n_2}(\boldsymbol{\mu}_2) \tilde{\mathbf{u}}_{n_2}(\boldsymbol{\mu}_2) = \mathbf{f}_{n_2}(\boldsymbol{\mu}_2) - \mathbb{V}_2^T \mathbb{A}_{N_2}(\boldsymbol{\mu}_2) \mathbf{u}_{2,D}(\boldsymbol{\mu}_2),\end{equation}
calling $\mathbb{A}_{n_2}(\boldsymbol{\mu}_2) = \mathbb{V}_2^T \mathbb{A}_{N_2}(\boldsymbol{\mu}_2) \mathbb{V}_2$ and $\mathbf{f}_{n_2}(\boldsymbol{\mu}_2) = \mathbb{V}_2^T \mathbf{f}_{N_2}(\boldsymbol{\mu}_2)$.

We remark that even if $\tilde{\mathbf{u}}_{n_2}(\boldsymbol{\mu}_2)$ has homogeneous Dirichlet boundary conditions, it depends on the interface data. Thus, the interface boundary conditions must be effectively considered as a parameter-dependent quantity in the reduction of the slave model.

Finally, the parametric Dirichlet data exchange at the interface of the two domains has to be reduced. Assuming that there is a portion of the slave boundary $\partial \Omega_{2,D}\backslash \Gamma$ with Dirichlet boundary conditions different from the interface data, we can write 
$$\mathbf{u}_{2,D}(\boldsymbol{\mu}_2) = \mathbf{u}_{2_{\mid \partial \Omega_{2,D}\backslash \Gamma}}(\boldsymbol{\mu}_2) + \mathbf{u}_{2_{\mid \Gamma}}(\boldsymbol{\mu}_2),$$ so that $\mathbf{u}_{2_{\mid \partial \Omega_{2,D}\backslash \Gamma}}(\boldsymbol{\mu}_2)$ is just another term of the right hand side of $\eqref{eq:slave_reduced_model}$, while $\mathbf{u}_{2_{\mid \Gamma}}(\boldsymbol{\mu}_2) = \mathbf{u}_{1_{\mid \Gamma}}(\boldsymbol{\mu}_1)$. For the sake of simplicity, here we assume that $\mathbf{u}_{2,D}(\boldsymbol{\mu}_2)$ corresponds to the data coming from the master model, meaning $\mathbf{u}_{2,D}(\boldsymbol{\mu}_2) = \mathbf{u}_{1_{\mid \Gamma}}(\boldsymbol{\mu}_1)$. The interface data reduction is described in the following Section. 

\begin{remark}The possible presence of nonlinear terms in the master or slave problems can also be treated relying on a suitable hyper-reduction technique using, for instance, the DEIM \cite{Barrault2004,Chaturantabut2009,Chaturantabut2010,Grepl2007,Maday2008,Negri2015,farhat20205}. For simplicity, in this paper we only focus on linear problems.\end{remark}

\subsection{Unsteady case}
Regarding the reduction of time-dependent problems, when dealing with POD-Galerkin ROMs the time variable can be considered as an additional parameter of the model. In this case, the set of snapshots is then made by FOM solutions collected at each time step, for each selected parameters value. As for the steady case, we consider the lifting technique to isolate the non-homogeneous Dirichlet interface conditions, \emph{i.e} the slave solution $\mathbf{u}_{2}^{n+1}(\boldsymbol{\mu}_2)$ becomes
\[
\mathbf{u}_{2}^{n+1}(\boldsymbol{\mu}_2) = \tilde{\mathbf{u}}_{2}^{n+1}(\boldsymbol{\mu}_2) + \mathbf{u}_{2,D}^{n+1}(\boldsymbol{\mu}_2) \quad \forall n = 0,\dots,N_t - 1.
\]
The slave FOM model can therefore be replaced by: find $\tilde{\mathbf{u}}_{2}^{n+1}(\boldsymbol{\mu}_2) \in \mathbb{R}^{N_2}$ such that 
\[
 \begin{cases}
 \begin{split}
 &\left(\frac{\mathbb{M}_{N_2}}{\Delta t} + \mathbb{A}_{N_2}(\boldsymbol{\mu}_2)\right) \tilde{\mathbf{u}}_{2}^{n+1}(\boldsymbol{\mu}_2) =  \mathbf{f}_{N_2}^{n+1}(\boldsymbol{\mu}_2) + \frac{\mathbb{M}_{N_2}}{\Delta t}\mathbf{u}_{2}^{n}(\boldsymbol{\mu}_2) \\
 & \qquad -\left(\frac{\mathbb{M}_{N_2}}{\Delta t} + \mathbb{A}_{N_2}(\boldsymbol{\mu}_2)\right)\mathbf{u}_{2,D(\boldsymbol{\mu}_2)}(\boldsymbol{\mu}_2)^{(n+1)}, &n = 0,\dots, N_t-1,
 \end{split}\\
 \tilde{\mathbf{u}}_2^{0}(\boldsymbol{\mu}_2) = \mathbf{u}_{2,0}(\boldsymbol{\mu}_2)-\mathbf{u}_{2,D}^{0}(\boldsymbol{\mu}_2). 
\end{cases}
\]

Considering a POD-Galerkin approach, we define the snapshots sets for each problems, including the time variable as parameter. For examples, the set of snapshots of the master problem will be $\mathbf{S}_1 = \{ \mathbf{u}_{1}^{t_1}(\boldsymbol{\mu}_1^k),\dots,\mathbf{u}_{1}^{N_t}(\boldsymbol{\mu}_1^k) \},~\boldsymbol{\mu}_1^k \in \mathscr{P}^{d_1}$, which means that the number of elements of $\mathbf{S}_1$ will be the product between $N_t$ and the number of selected parameters $\boldsymbol{\mu}_1^k$. Similarly, for the slave model we consider the set $ \tilde{\mathbf{S}}_2 = \{\tilde{\mathbf{u}}_{2}^{t_1}(\boldsymbol{\mu}_2^k),\dots, \tilde{\mathbf{u}}_{2}^{N_t}(\boldsymbol{\mu}_2^k)\},~\boldsymbol{\mu}_2^k \in \mathscr{P}^{d_2}$.

We then can represent reduced order time-depedent models using an equivalent formulation as in the case of time-independent models. In particular, recalling equation $\eqref{FOM_discretization_syste_time}$, multiplying both equation sides for the basis functions matrix $\mathbb{V}_i$ constructed according to the new set of snapshots, we can easily get the reduced order form of the master problem: find $\mathbf{u}_{n_1}^{n+1}(\boldsymbol{\mu}) \in \mathbb{R}^{n_1}$ such that
\[
\begin{cases}
\left(\frac{\mathbb{M}_{n_1}}{\Delta t} + \mathbb{A}_{n_1}(\boldsymbol{\mu}_1)\right) \mathbf{u}_{n_1}^{n+1}(\boldsymbol{\mu}_1) =  \mathbf{f}_{n_1}^{n+1}(\boldsymbol{\mu}_1) + \frac{\mathbb{M}_{n_1}}{\Delta t}\mathbf{u}_{n_1}^{n}(\boldsymbol{\mu}_1), &n = 0,\dots, N_t-1,\\
\mathbf{u}_{n_1}^{0}(\boldsymbol{\mu}_1) = \mathbf{u}_{n_1,0}(\boldsymbol{\mu}_1),
\end{cases}\]
with $\mathbb{M}_{n_1} = \mathbb{V}_1^T \mathbb{M}_{N_1} \mathbb{V}_1$, $\mathbb{A}_{n_1}(\boldsymbol{\mu}_1) = \mathbb{V}_1^T \mathbb{A}_{N_1}(\boldsymbol{\mu}_1) \mathbb{V}_1$, 
$\mathbf{f}_{n_1}^{n+1}(\boldsymbol{\mu}_1) = \mathbb{V}_1^T \mathbf{f}_{N_1}^{n+1}(\boldsymbol{\mu}_1)$ and $\mathbf{u}_{n_1,0}(\boldsymbol{\mu}_1)$ the projection of the initial solution  $\mathbf{u}_{N_1,0}(\boldsymbol{\mu}_1)$ on the master reduced basis.
Note that now the right hand side $\mathbf{f}_{N_1}^{n+1}(\boldsymbol{\mu}_1)$ is time dependent and must be reassembled at each time step. Supposing, instead, that the matrix $\mathbb{A}_{N_1}(\boldsymbol{\mu}_1)$ is time-independent, as often happens, then the reduced $\mathbb{A}_{N_1}(\boldsymbol{\mu}_1)$ can be pre-computed and stored during the offline time and directly used in the online time. 

Similarly, the slave model can also be written as in equation $\eqref{Eq:slave_separate_bc}$, so that the reduced formulation is: find $\tilde{\mathbf{u}}_{n_2}^{n+1}(\boldsymbol{\mu}_2)\in \mathbb{R}^{n_2}$ such that
\[
\begin{cases}
\begin{split}
&\left(\frac{\mathbb{M}_{n_2}}{\Delta t} + \mathbb{A}_{n_2}(\boldsymbol{\mu}_2)\right) \tilde{\mathbf{u}}_{n_2}^{n+1}(\boldsymbol{\mu}_2) =  \mathbf{f}_{n_2}^{n+1}(\boldsymbol{\mu}_2) + \frac{\mathbb{M}_{n_2}}{\Delta t}\mathbf{u}_{n_2}^{n}(\boldsymbol{\mu}_2) \\
& \qquad -\left(\frac{\mathbb{V}_2^T\mathbb{M}_{N_2}}{\Delta t} + \mathbb{V}_2^T\mathbb{A}_{N_2}(\boldsymbol{\mu}_2)\right)\mathbf{u}_{2,D}(\boldsymbol{\mu}_2)^{(n+1)}, &n = 0,\dots, N_t-1,
\end{split}\\
\tilde{\mathbf{u}}_{n_2}^{0}(\boldsymbol{\mu}_2) = \tilde{\mathbf{u}}_{n_2,0}(\boldsymbol{\mu}_2)-\mathbf{u}_{n_2,D}^{0}(\boldsymbol{\mu}_2).
\end{cases}
\]
Here $\mathbb{M}_{n_2} = \mathbb{V}_2^T \mathbb{M}_{N_2} \mathbb{V}_2$, $\mathbb{A}_{n_2}(\boldsymbol{\mu}_2) = \mathbb{V}_2^T \mathbb{A}_{N_2}(\boldsymbol{\mu}_2) \mathbb{V}_2$, $\mathbf{f}_{n_2}(\boldsymbol{\mu}_2) = \mathbb{V}_2^T \tilde{\mathbf{f}}_{N_2}^{n+1}(\boldsymbol{\mu}_2)$ and $\tilde{\mathbf{u}}_{n_2,0}(\boldsymbol{\mu}_2)$ and $\mathbf{u}_{n_2,D}^{0}(\boldsymbol{\mu}_2)$ are the projections on the slave reduced basis of the initial slave solution and initial interface Dirichlet data, respectively.
Suppose that $\mathbb{A}_{N_2}(\boldsymbol{\mu}_2)$ is time dependent, its reduced version can be stored in the offline phase, while the right hand side must be assembled at each time step. Given the equivalent formulation of $\eqref{Eq:slave_separate_bc}$, the snapshots can be computed as in the previous Section but considering also the evolution of the solution in time for each selected parameter, as previously done for the master model.

\section{Interface DEIM Reduction}
\label{Sub_ROM_interface}
Continuity of the solution at the interface of two problems domains is essential, and usually easy to be achieved if the discretization meshes are conforming. In this case, given the possible different global numbering of the degrees of freedom of the two discretizations, a map between the interface grids numbering must be computed. Moreover, in presence of non-conforming meshes, some interpolation method must be applied. Especially when large domains and very fine discretizations are considered, in both conforming or non-conforming cases, this procedure becomes quite expensive. 
When dealing with parametrized solution and, thus, parametrized interface conditions, we must reduce the computational costs of this information transfer, too. Using the DEIM \cite{Barrault2004,Chaturantabut2009,Chaturantabut2010,Grepl2007}, we aim at reducing the dimension of the data to be transferred between the two domains in the conforming case and, at the same time, to interpolate or project such data in presence of non-conforming grids. 

Moreover, given the independent nature of this kind of reduction from the master and slave model, the interface DEIM reduction can be used at first as an alternative interface interpolation or projection method during the forward computation of the coupled problem. 
In this Section we start to describe the interface reduction technique, that can be used as interpolation or projection method between the master and slave FOMs. Then, we add further details when also the master and the slave ROMs are considered.

Following the standard DEIM approach \cite{Chaturantabut2009,Chaturantabut2010}, during the offline phase we compute the set of snapshots for the Dirichlet interface data. 
Denoting by 
$$ Y_{h_k} = \{ \boldsymbol{\lambda} = \mathbf{v}_{\mid \Gamma}, ~\mathbf{v} \in X^{q_k}_{h_k} \}$$
the space of traces of functions on $\Gamma$, we define the operator 
$$\Pi : Y_{h_1}\rightarrow Y_{h_2}$$
to transfer the information from the master to the slave model. When $\Gamma_1$ and $\Gamma_2$ coincide, $\Pi$ is the classical Lagrange interpolation operator defined by the relation:
$$\Pi ~\boldsymbol{\phi}_{h_2}(\mathbf{v}_{i_{\mid \Gamma_1}}) = \boldsymbol{\phi}_{h_2}(\mathbf{v}_{i_{\mid \Gamma_1}}), \quad i = 1,\dots,N_{1_{\mid \Gamma_1}} \quad \forall \boldsymbol{\phi}_{h_2} \in Y_{h_2},$$
where $N_{1_{\mid \Gamma_1}}$ is the dimension of $\Gamma_1$. In the following, we will equally denote by $N_{2_{\mid \Gamma_2}}$ the dimension of $\Gamma_2$.

In case of conforming interface grids, meaning that $h_1 = h_2$ and $q_1 = q_2$, $\Pi$ represents the map between the interface DoFs numbering; if the interface meshes are non-conforming, $\Pi$ is the interpolation or projection operator from the master to the slave interface, \emph{e.g.} a linear interpolation as in Section $\ref{Sub_numerical_test}$. 
Then, we compute multiple instances of the parametrized interface master solution $\mathbf{u}_{1_{\mid \Gamma_1}}(\boldsymbol{\mu}_1)$ solving several times the master model, one for each $\boldsymbol{\mu}_1^k \in \mathscr{P}^{d_1}$. Then, the set of snapshots is represented by the Dirichlet data already interpolated on the slave interface grids, that is, we consider as snapshot set 
\begin{equation}
\label{Eq:snapshots_deim}
\mathbf{S}_D = \{\mathbf{u}_{2_{\mid \Gamma_2}} (\boldsymbol{\mu}_1^k), \quad \boldsymbol{\mu}_1^k \in \mathscr{P}^{d_1}\},
\end{equation}
where $\mathbf{u}_{2_{\mid \Gamma_2}} (\boldsymbol{\mu}_1^k) = \Pi(\mathbf{u}_{1_{\mid \Gamma_1}} (\boldsymbol{\mu}_1^k))$. According to the assumption of Section $\ref{Sub_ROM_subproblems}$, we have $\mathbf{u}_{2,D} (\boldsymbol{\mu}_1^k) = \Pi(\mathbf{u}_{1_{\mid \Gamma_1}} (\boldsymbol{\mu}_1^k))
.$

Then, using  POD, we build the basis $\mathbf{\Phi}_D$ to define a low-dimensional representation of Dirichlet data, that is, to approximate $\mathbf{u}_{2,D}(\boldsymbol{\mu}_1^k)$ by 
\[ \mathbf{u}_{2,D}(\boldsymbol{\mu}_1^k)\simeq \mathbf{\Phi}_D \mathbf{u}_{2,M}(\boldsymbol{\mu}_1^k) \]
where $\mathbf{u}_{2,M}(\boldsymbol{\mu}_1^k)$ is a vector of coefficients of dimension $M \ll N_{2_{\mid \Gamma_2}}$. 
Moreover, according to the DEIM construction, using a greedy algorithm \cite{Maday2008}, we select iteratively $M$ indices \vspace{-0.05cm}
\begin{equation}\label{eq:set_indices_2}\mathcal{I}_{2,D} \subset \{1,\dots,N_{2_{\mid \Gamma_2}}\}, \quad \mid  \mathcal{I}_{2,D}\mid  = M \vspace{-0.05cm} \end{equation}
from the basis $\mathbf{\Phi}_D$ which minimize the interpolation error over the snapshots set according to the maximum norm. This set of indices represents those  indices of the DoFs at which to extract the FOM data from the slave interface -- they are usually referred to as  {\em magic points} in ROM computations. Then, in the online phase, given a new parameter $\boldsymbol{\mu}_1 \in \mathscr{P}^{d_1}$, the coefficient vector $\mathbf{u}_{2,M}(\boldsymbol{\mu}_1)$ can be found imposing M interpolation constraints at the $M$ points corresponding to the selected indices, that is, by solving the linear system \vspace{-0.05cm} $$ \mathbf{\Phi}_{D_{\mid \mathcal{I}_{2,D}}} \mathbf{u}_{2,M}(\boldsymbol{\mu}_1) = \mathbf{u}_{2_{\mid \mathcal{I}_{2,D}}}(\boldsymbol{\mu}_1), \vspace{-0.05cm}$$
where $\mathbf{\Phi}_{D_{\mid \mathcal{I}_{2,D}}} \in \mathbb{R}^{M\times M }$ is the matrix containing the $\mathcal{I}_{2,D}$ rows of $\mathbf{\Phi}_{D}$. In practice, we can express  \vspace{-0.05cm}
\begin{equation}\label{eq.interface_data_reduction_eq} \mathbf{u}_{2,D}(\boldsymbol{\mu}_1) \simeq \mathbf{\Phi}_D \mathbf{\Phi}_{D_{\mid \mathcal{I}_{2,D}}}^{-1}\mathbf{u}_{2_{\mid \mathcal{I}_{2,D}}}(\boldsymbol{\mu}_1). \vspace{-0.05cm}\end{equation}

To include the interpolation or projection of the Dirichlet data, during the offline phase, we replace $\mathbf{u}_{2_{\mid \mathcal{I}_{2,D}}}(\boldsymbol{\mu}_1)$ with the interface solution of the master problem in the corresponding DoFs. Therefore, for each index $i_2 \in \mathcal{I}_{2,D}$, we extract the corresponding DoF $\mathbf{p}_2$ in Cartesian coordinates and we search for \vspace{-0.05cm}
$$\mathbf{p}_1  = \min_{\mathbf{p}_1^j \in DoFs_{\Gamma_1}}(\text{dist}(\mathbf{p}_2 - \mathbf{p}_1^j)), \vspace{-0.05cm}$$ 
meaning the nearest DoF in the master interface with respect to $\mathbf{p}_2$. Then, we search for the index of $\mathbf{p}_1$ in the master numerations to construct a set of indices \vspace{-0.05cm} $$\mathcal{I}_{1,D} = \{ i_1^{~i_2}\}_{i_2 \in \mathcal{I}_{2,D}}. \vspace{-0.05cm}$$

Hence, during the online phase, the values needed to reconstruct the Dirichlet data directly on the slave interface are given by the values of the master solution $\mathbf{u}_1$ extracted in correspondence to the $i_1$-th DoF according to $\mathcal{I}_{1,D}$, \emph{i.e.} in the $i_1$-th magic point, meaning that $ \mathbf{u}_{2_{\mid \mathcal{I}_{2,D}}}(\boldsymbol{\mu}_1) = \mathbf{u}_{1_{\mid \mathcal{I}_{1,D}}}(\boldsymbol{\mu}_1)$ and  \vspace{-0.05cm}
$$  \mathbf{u}_{2,D}(\boldsymbol{\mu}_1)\simeq \mathbf{\Phi}_D \mathbf{\Phi}_{D_{\mid \mathcal{I}_{2,D}}}^{-1}\mathbf{u}_{1_{\mid \mathcal{I}_{1,D}}}(\boldsymbol{\mu}_1). \vspace{-0.05cm}$$
We summarize the interface DEIM reduction in Algorithm $\ref{alg:DEIMInterface}$ (see also Fig. $\ref{Fig:DEIM_interface_reduction}$).
\begin{figure}[h!]
	\centering
	\includegraphics[width =0.9\textwidth]{./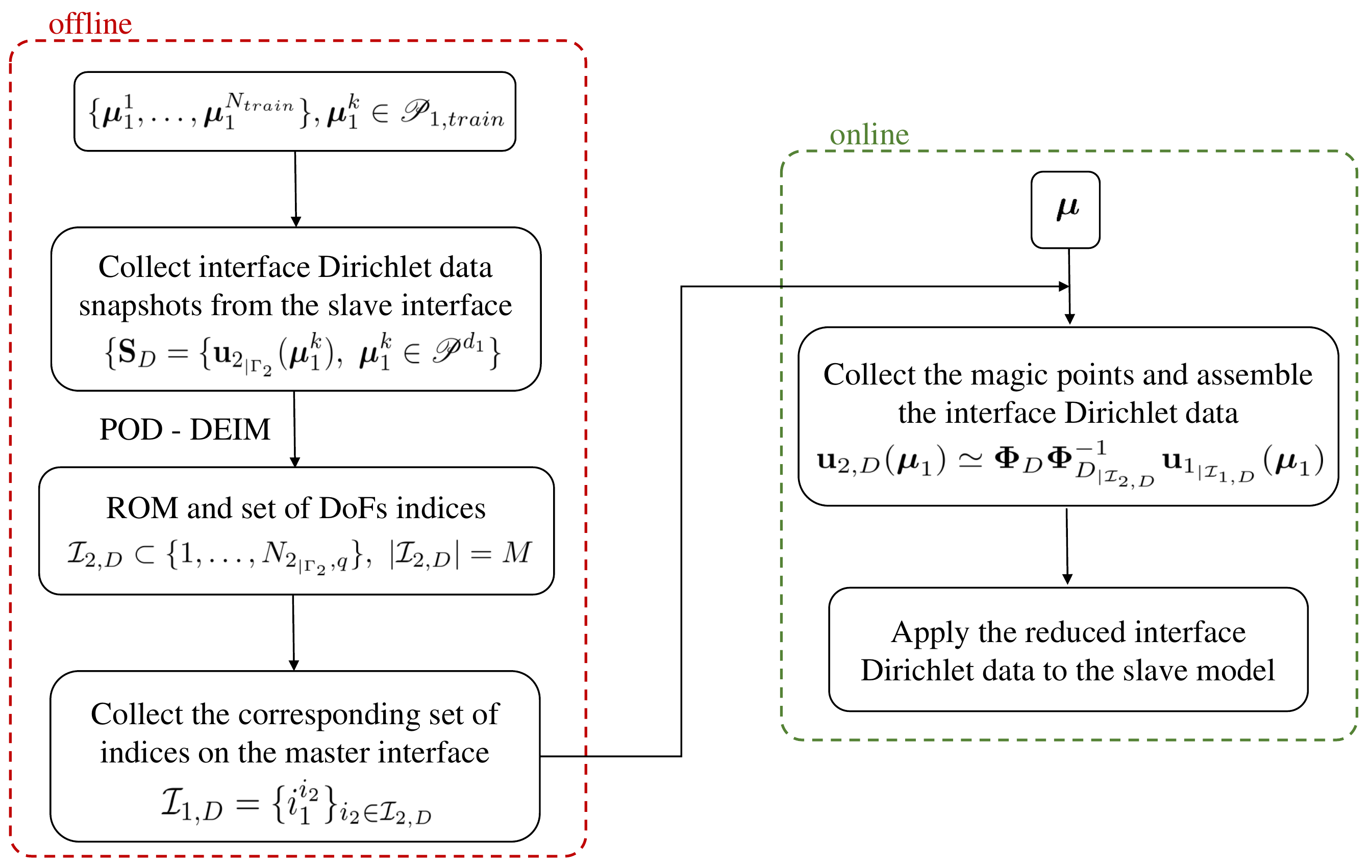}
	\smallskip
	\caption{Scheme for the offline and online phases of the interface Dirichlet data reduction.}
	\label{Fig:DEIM_interface_reduction}
\end{figure} 

\begin{remark}The indices in $\mathcal{I}_{2,D}$ are not necessary in ascending order. Moreover, $\mathcal{I}_{1,D}$ must be ordered in the same way as $\mathcal{I}_{2,D}$.\end{remark}

\begin{remark}When $\Gamma_1 = \Gamma_2 = \Gamma$ and the meshes are conforming, we can find a perfect match between corresponding interface DoFs, meaning that we are only going  to reduce the dimension of the data to be transmitted between the two problems. Instead, when the meshes are non-conforming or $\Gamma_1 \not= \Gamma_2$, in searching for corresponding DoFs we are introducing an error, especially when the two meshes considered are very coarse. Obviously, if the discretization used in the master model is much finer than the one used for the slave model, the possibility to find a good match between the DoFs in the two interfaces increases. Therefore, to minimize such error, we suggest to choose a finer mesh in the master domain than the one in the slave domain. \end{remark}
\begin{remark}Differently from the POD used for the master and slave reduction, the DEIM-based interpolation presented above does not  depend on time. Then, in the unsteady case, it is sufficient to include in the set of snapshots $\eqref{Eq:snapshots_deim}$ the time-dependent interface data to be able to apply the presented method at the domains interface (see Section $\ref{Sub_numerical_test}$).\end{remark}

\begin{algorithm}[h!]
	\caption{Interface DEIM procedure} \label{alg:DEIMInterface}
	\begin{algorithmic}[1]
		\Procedure{[ROM arrays] = Offline}{FOM arrays, $P_{1,train}$, $\epsilon_{tol_{D}}$}
		\State \emph{Dirichlet data snapshots}
		\For{$\boldsymbol{\mu}_1 \in P_{1,train}$}
		\State $\mathbf{u}_1 \gets$ solve the master model $~\eqref{FOM_discretization_system}$;
		\State $\mathbf{u}_{1_{\mid \Gamma_1}} \gets$ extract interface master solution $\Gamma_1$;
		\State $\mathbf{u}_{2_{\mid \Gamma_2}} \gets$ interpolate/project $\mathbf{u}_{1_{\mid \Gamma_1}}$ to the slave interface $\Gamma_2$;
		\State $\mathbf{S}_{D} = [\mathbf{S}_D, \mathbf{u}_{2_{\mid \Gamma_2}}]$;
		\EndFor
		\State \emph{DEIM reduced-order arrays:}
		\State $\mathbf{\Phi}_D \gets$ POD($\mathbf{S}_{D},\epsilon_{tol_{D}}$);$\quad \mathcal{I}_{2,D} \gets$ DEIM-indices($\mathbf{\Phi}_D$);
		\State \emph{Master magic points:}
		\For{$i_2 \in \mathcal{I}_{2,D}$}
		\State $p_2 \gets$ get Cartesian coordinates of $i_2$ DoF; 
		\State $p_1 = \min_{p_1^j \in DoF_{\Gamma_1}}(\text{dist}(p_2 - p_1^j)) \gets$ search the nearest DoF of $p_2$ in $\Gamma_2$;
		\State $i_1 \gets$ get the master index for $p_1$;
		\State $\mathcal{I}_{1,D} = [\mathcal{I}_{1,D}, i_1]$;
		\EndFor
		\EndProcedure
		\State
		\Procedure{[$\mathbf{u}_2$] = Online Query}{ROM arrays, FOM arrays,$\boldsymbol{\mu}_1$, $\boldsymbol{\mu}_2$}
		\State  $\mathbf{u}_1 \gets$ solve master model with $\boldsymbol{\mu}_1~\eqref{FOM_discretization_system}$;
		\State $\mathbf{u}_{1\mid {\mathcal{I}_{1,D}}} \gets$ extract magic points;
		\State $\mathbf{u}_{2_{\mid \Gamma_2}} = \mathbf{\Phi}_D \mathbf{\Phi}_{D_{\mid \mathcal{I}_{2,D}}}^{-1}\mathbf{u}_{1\mid {\mathcal{I}_{1,D}}} \gets$ DEIM approximation;
		\State apply $\mathbf{u}_{2_{\mid \Gamma_2}}$ and solve slave model with $\boldsymbol{\mu}_2~\eqref{FOM_discretization_system}$.	\EndProcedure
	\end{algorithmic}
\end{algorithm}	

During the DEIM reduction of the interface data we have  considered all vectors related to $\mathbf{u}_{1}(\boldsymbol{\mu}_1)$ and $\mathbf{u}_{2}(\boldsymbol{\mu}_2)$ in FOM form. Therefore, when also the master and the slave models are reduced, during the online computation $\mathbf{u}_{1}(\boldsymbol{\mu}_1)$ and $\mathbf{u}_{2}(\boldsymbol{\mu}_2)$ must be reconstructed from the corresponding reduced vectors $\mathbf{u}_{1}(\boldsymbol{\mu}_1)$ and $\mathbf{u}_{2}(\boldsymbol{\mu}_2)$. This is an expensive procedure especially when the problem is time-dependent and the FOM solutions must be reconstructed at each time step. To reduce the computational costs, we compute the FOM solution only for the selected magic points. This is done considering an extraction matrix $\mathbb{U} \in \mathbb{R}^{M \times N_1}$ that can be assembled during the offline phase once the index set $\mathcal{I}_{1,D}$ has been computed. Then, each row of $\mathbb{U}$ has all entries equal to zero except that in the column corresponding to the index $i_1 \in \mathcal{I}_{1,D}$ which is one. For example, suppose that $\mathcal{I}_{1,D}$ has only three elements, e.g. $\mathcal{I}_{1,D} = \{ 5, 3, N_i\}$, then $\mathbb{U} \in \mathbb{R}^{3 \times N_i}$ is such that
$$ \mathbb{U}= \left [ \begin{matrix}
0 &0 &0 &0 &1 &0 &0 &\dots &0 \\
0 &0 &1 &0 &0 &0 &0 &\dots &0 \\
0 &0 &0 &0 &0 &0 &0 &\dots &1
\end{matrix}\right]. $$
Therefore, according to equation $\eqref{eq:reduced_master_solution}$, the magic points can be directly computed as 
\begin{equation}\label{eq:extraction_process} \mathbf{u}_{1_{\mid \mathcal{I}_{1,D}}}(\boldsymbol{\mu}_1) = \mathbb{U}\mathbb{V}_1\mathbf{u}_{n_1}(\boldsymbol{\mu}_1),\end{equation}
and the Dirichlet data can be found through
$$  \mathbf{u}_{2,D}(\boldsymbol{\mu}_1)\simeq \mathbf{\Phi}_D \mathbf{\Phi}_{D_{\mid \mathcal{I}_{2,D}}}^{-1}\mathbb{U}\mathbb{V}_1\mathbf{u}_{n_1}(\boldsymbol{\mu}_1).$$
Since the basis function matrices $\boldsymbol{\Phi}$ and $\mathbb{V}_1$ and the extraction matrix $\mathbb{U}$ are parameters independent, during the offline phase the matrices product $\mathbf{\Phi}_D \mathbf{\Phi}_{D_{\mid \mathcal{I}_{2,D}}}^{-1}\mathbb{U}\mathbb{V}_1 \in \mathbb{R}^{M\times n_1}$ can be stored to be used directly in the online phase. 

Similarly, it is possible to compute directly the reduced Dirichlet term $\mathbb{V}_2^T \mathbb{A}_{N_2} \mathbf{u}_{2,D}(\boldsymbol{\mu}_1)$ in equation $\eqref{eq:slave_reduced_model}$ without reconstructing the FOM vector $\mathbf{u}_{2,D}(\boldsymbol{\mu}_1)$ dimension $N_2$. In fact, the lifting term is
$$  \mathbb{V}_2^T \mathbb{A}_{N_2} \mathbf{u}_{2,D}(\boldsymbol{\mu}_1) = \mathbb{V}_2^T \mathbb{A}_{N_2}\mathbf{\Phi}_D\mathbf{\Phi}_{D_{\mid \mathcal{I}_{2,D}}}^{-1}\mathbb{U}\mathbb{V}_1\mathbf{u}_{n_1}(\boldsymbol{\mu}_1), $$ 
where also $\mathbb{V}_2$ and $\mathbb{A}_{N_2}$ are parameter independent. Hence, in the offline phase, the matrix product $\mathbb{V}_2^T \mathbb{A}_{N_2}\mathbf{\Phi}_D\mathbf{\Phi}_{D_{\mid \mathcal{I}_{2,D}}}^{-1}\mathbb{U}\mathbb{V}_1 \in \mathbb{R}^{n_2 \times n_1}$ can be computed and stored. 
The complete reduction of the one-way coupled problem can be found in Algorithm $\ref{alg:CompleteCoupledROM}$.

\begin{remark}We remark that following the proposed procedure, in the online phase only reduced dimension operations are needed.\end{remark}

\begin{algorithm}[h!]
	\caption{Complete ROM procedure} \label{alg:CompleteCoupledROM}
	\begin{algorithmic}[1]
		\Procedure{[ROM arrays] = Offline}{FOM arrays, $P_{1,train}$,$P_{2,train}$, $\epsilon_{tol_1}$,$\epsilon_{tol_2}$, $\epsilon_{tol_{D}}$}
		\State \emph{Solution and Dirichlet data snapshots}
		\For{$\boldsymbol{\mu}_1 \in P_{1,train}$}
		\For{$\boldsymbol{\mu}_2 \in P_{2,train}$}
		\State $\mathbf{u}_1 \gets$ solve master model $~\eqref{FOM_discretization_system}$;
		\State $\mathbf{u}_2 \gets$ solve slave model $~\eqref{FOM_discretization_system}$;
		\State $\mathbf{u}_{1_{\mid \Gamma_1}} \gets$ extract interface master solution $\Gamma_1$;
		\State $\mathbf{u}_{2_{\mid \Gamma_2}} \gets$ interpolate/project $\mathbf{u}_{1_{\mid \Gamma_1}}$ to the slave interface $\Gamma_2$;
		\State $\mathbf{S}_{1} = [\mathbf{S}_1, \mathbf{u}_{1}]$;
		\State $\mathbf{S}_{2} = [\mathbf{S}_2, \mathbf{u}_{2}]$;
		\State $\mathbf{S}_{D} = [\mathbf{S}_D, \mathbf{u}_{2_{\mid \Gamma_2}}]$;
		\EndFor
		\EndFor
		\State $\mathbb{V}_{1} \gets$ POD($\mathbf{S}_1$,$\epsilon_{tol_1}$);
		\State $\tilde{\mathbf{S}}_2 \gets$ zero entries on the interface rows;
		\State $\mathbb{V}_{2} \gets$ POD($\tilde{\mathbf{S}}_2$,$\epsilon_{tol_2}$);
		\State \emph{Reduced-order matrices:}
		\State $\{\mathbb{A}_{n_1}, \mathbf{f}_{n_1}\} \gets $ projection of the full order master arrays onto $\mathbb{V}_1$; 
		\State $\{\mathbb{A}_{n_{2}}, \mathbf{f}_{n_2}\} \gets $ projection of the full order slave arrays onto $\mathbb{V}_2$;
		\State \emph{DEIM reduced-order arrays:}
		\State $\mathbf{\Phi}_D \gets$ POD($\mathbf{S}_{D},\epsilon_{tol}^{D}$);$\quad \mathcal{I}_{2,D} \gets$ DEIM-indices($\mathbf{\Phi}_D$);
		\State \emph{Master magic points:}
		\For{$i_2 \in \mathcal{I}_{2,D}$}
		\State $p_2 \gets$ get Cartesian coordinates of $i_2$ DoF; 
		\State $p_1 = \min_{p_1^j \in DoF_{\Gamma_1}}(\text{dist}(p_2 - p_1^j)) \gets$ search the nearest DoF of $p_2$ in $\Gamma_2$;
		\State $i_1 \gets$ get the master index for $p_1$;
		\State $\mathcal{I}_{1,D} = [\mathcal{I}_{1,D}, i_1]$;
		\EndFor
		\State $\mathbb{U} \gets$ matrix to extract the rows of $\mathbb{V}_1$ corresponding to the magic points in $\mathcal{I}_{1,D}$;
		\State save the matrix product $\mathbb{U}\mathbb{V}_1$; 
		\State save matrices product for slave lifting term $\mathbb{V}_2^T \mathbb{A}_{N_2}\mathbf{\Phi}_D\mathbf{\Phi}_{D_{\mid \mathcal{I}_{2,D}}}^{-1}\mathbb{U}\mathbb{V}_1$;
		\EndProcedure
		\State
		\Procedure{[$\mathbf{u}_2$] = Online Query}{ROM arrays, $\boldsymbol{\mu}_1$, $\boldsymbol{\mu}_2$}
		\State  $\mathbf{u}_{n_1} \gets$ solve ROM master model with $\boldsymbol{\mu}_1$;
		\State $\mathbb{V}_2^T \mathbb{A}_{N_2}\mathbf{\Phi}_D\mathbf{\Phi}_{D_{\mid \mathcal{I}_{2,D}}}^{-1}\mathbb{U}\mathbb{V}_1\mathbf{u}_{n_1}  \gets$ assemble the lifting term;
		\State $\tilde{\mathbf{u}}_{n_2} \gets$ solve ROM slave model with $\boldsymbol{\mu}_2$;
		\State $\mathbf{u}_{2} = \mathbb{V}_2 \tilde{\mathbf{u}}_{n_2} + \mathbf{\Phi}_D\mathbf{\Phi}_{D_{\mid \mathcal{I}_{2,D}}}^{-1}\mathbb{U}\mathbb{V}_1\mathbf{u}_{n_1} \gets$ assemble FOM slave solution.
		\EndProcedure
	\end{algorithmic}
\end{algorithm}	

\section{A posteriori error estimates}
\label{Sub_error_estimate}
In this Section we derive suitable a posteriori estimates for the norm of the errors obtained with the proposed reduced techniques in both the steady and the unsteady case. Since  POD and DEIM techniques are standard, here we  only consider the error between the high-fidelity slave solution and the reduced order slave solution, which is the final result of the coupled ROM, namely 
\begin{equation}\label{eq.error_slave} \| \mathbf{u}_{2}(\boldsymbol{\mu}_2) - \mathbb{V}_2  \mathbf{u}_{n_2}(\boldsymbol{\mu}_2)\|_2.
\end{equation}
To this end, we relate the slave error with the reduced order errors computed for the master solution and the interface data. Since the slave and the master solutions are constructed in two different reduced spaces, we consider only the 2-norm which can be equally applied in both cases.
\begin{remark}Even if the presented steady and unsteady estimates are of little practical use, they can be seen as an heuristic measure of the committed error, need to choose appropriate tolerances during each coupled ROM step. Indeed, given the modular nature of the model, controlling the accuracy of each reduction step, \emph{i.e.}, \emph{(i)} of the master ROM solution, \emph{(ii)} of the interface DEIM approximation and \emph{(iii)} of the slave ROM solution, ensures the final expected accuracy of the slave solution.\end{remark}

\subsection{Steady case}
Since we aim at finding an estimate of the error computed with the POD reduction and DEIM, referring to \cite{QuarteroniManzoniNegri2016,Grepl2005,Quarteroni2011,Rozza2007,Sen2006}, we can define  the discrete residual for a generic time-independent reduced problem  according to equation $\eqref{FOM_discretization_system}$ as
\begin{equation}\label{eq.discrete_residual} \mathbf{r}(\mathbf{u}_{N}(\boldsymbol{\mu})) = \mathbf{f}_{N}(\boldsymbol{\mu}) - \mathbb{A}_{N}(\boldsymbol{\mu})\mathbb{V} \mathbf{u}_{n}(\boldsymbol{\mu}),
\end{equation}
where $N$ and $n$ are the FOM and the ROM dimensions, respectively, and $\boldsymbol{\mu} \in \mathscr{P}^{d}$ is the parameters vector. Then, an error bound can be found by estimating the three  error terms separately, according to the following proposition.

\begin{prop} \label{Prop:error_steady}For any $\boldsymbol{\mu}_1 \in \mathscr{P}^{d_1}$ and $\boldsymbol{\mu}_2 \in \mathscr{P}^{d_2}$, let us denote by $\|  \mathbf{u}_{2,h_2}(\boldsymbol{\mu}_2) - \mathbb{V}_2  \mathbf{u}_{n_2}(\boldsymbol{\mu}_2)\| _2$ the slave reduced order error. Then, the following error estimates holds:
	\[ \begin{split}
	\|  \mathbf{u}_{2}(\boldsymbol{\mu}_2) - \mathbb{V}_2  \mathbf{u}_{n_2}(\boldsymbol{\mu}_2)\| _2 ~\leq &~ \frac{1}{\sigma_{min}(\mathbb{A}_{N_2}(\boldsymbol{\mu}_2))} \| \mathbf{r}_{2}(\tilde{\mathbf{u}}_{2}(\boldsymbol{\mu}_2))\| _2\\ 
	&+ \| \mathbf{\Phi}_{D_{\mid \mathcal{I}_{2,D}}}\| _2\| (\mathbb{I} - \mathbf{\Phi}_D \mathbf{\Phi}_D^{-1}) \mathbf{u}_{2,D}(\boldsymbol{\mu}_1)\|_2  +\frac{C}{\sigma_{min}(\mathbb{A}_{N_1}(\boldsymbol{\mu}_1))} \| \mathbf{r}_{1}(\mathbf{u}_{1}(\boldsymbol{\mu}_1))\| _2,
	\end{split}\]
	where $\sigma_{min}(\mathbb{A}_{N_i})$, $i=1,2$ denote the smallest singular value of $\mathbb{A}_{N_i}$, $\mathbf{r}_i$, $i=1,2$ are the discrete residual $\eqref{eq.discrete_residual}$ of the master model and slave model with homogeneous Dirichlet interface conditions respectively,  $\mathbb{I}$ is the identity matrix and $C = \| \mathbf{\Phi}_D \mathbf{\Phi}_{D_{\mid \mathcal{I}_{2,D}}}^{-1}\mathbb{U}\| _2$. \end{prop}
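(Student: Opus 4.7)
\medskip

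\textbf{Proof plan.} The plan is to combine the lifting decomposition with a triangle inequality, and then estimate three error contributions separately: the ROM residual of the homogenized slave problem, the DEIM interpolation error on the lifting vector, and the propagation of the master ROM error through the DEIM-based reconstruction.

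First, writing $\mathbf{u}_{2}(\boldsymbol{\mu}_2) = \tilde{\mathbf{u}}_{2}(\boldsymbol{\mu}_2) + \mathbf{u}_{2,D}(\boldsymbol{\mu}_1)$ and its reduced counterpart $\mathbb{V}_2 \mathbf{u}_{n_2}(\boldsymbol{\mu}_2) = \mathbb{V}_2 \tilde{\mathbf{u}}_{n_2}(\boldsymbol{\mu}_2) + \hat{\mathbf{u}}_{2,D}(\boldsymbol{\mu}_1)$, where $\hat{\mathbf{u}}_{2,D}(\boldsymbol{\mu}_1) = \mathbf{\Phi}_D \mathbf{\Phi}_{D_{\mid \mathcal{I}_{2,D}}}^{-1}\mathbb{U}\mathbb{V}_1 \mathbf{u}_{n_1}(\boldsymbol{\mu}_1)$ is the DEIM reconstruction built from the reduced master solution, one application of the triangle inequality gives
\[
\|\mathbf{u}_2 - \mathbb{V}_2 \mathbf{u}_{n_2}\|_2 \;\leq\; \|\tilde{\mathbf{u}}_2 - \mathbb{V}_2 \tilde{\mathbf{u}}_{n_2}\|_2 \;+\; \|\mathbf{u}_{2,D} - \hat{\mathbf{u}}_{2,D}\|_2 .
\]

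Second, for the homogenized slave contribution I would use the usual algebraic ROM residual identity $\mathbb{A}_{N_2}(\boldsymbol{\mu}_2)\bigl(\tilde{\mathbf{u}}_2 - \mathbb{V}_2 \tilde{\mathbf{u}}_{n_2}\bigr) = \mathbf{r}_2(\tilde{\mathbf{u}}_2)$, invert $\mathbb{A}_{N_2}$, and bound its operator norm by $1/\sigma_{\min}(\mathbb{A}_{N_2}(\boldsymbol{\mu}_2))$, producing the first summand of the claim.

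Third, I would further split the Dirichlet-data error by inserting the DEIM reconstruction based on the \emph{exact} (FOM) master solution:
\[
\mathbf{u}_{2,D} - \hat{\mathbf{u}}_{2,D} \;=\; \bigl(\mathbf{u}_{2,D} - \mathbf{\Phi}_D \mathbf{\Phi}_{D_{\mid \mathcal{I}_{2,D}}}^{-1}\mathbb{U}\,\mathbf{u}_1\bigr) \;+\; \mathbf{\Phi}_D \mathbf{\Phi}_{D_{\mid \mathcal{I}_{2,D}}}^{-1}\mathbb{U}\,\bigl(\mathbf{u}_1 - \mathbb{V}_1 \mathbf{u}_{n_1}\bigr).
\]
The first bracket is the standard DEIM interpolation error on the exact lifting vector, which the usual DEIM estimate rewrites as $\|\mathbf{\Phi}_{D_{\mid \mathcal{I}_{2,D}}}\|_2$ times the POD projection residual $\|(\mathbb{I} - \mathbf{\Phi}_D \mathbf{\Phi}_D^{-1})\mathbf{u}_{2,D}\|_2$, giving the second summand. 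The second bracket factors out the operator-norm constant $C = \|\mathbf{\Phi}_D \mathbf{\Phi}_{D_{\mid \mathcal{I}_{2,D}}}^{-1}\mathbb{U}\|_2$ times $\|\mathbf{u}_1 - \mathbb{V}_1 \mathbf{u}_{n_1}\|_2$, and the master-side ROM error is controlled by reusing the residual identity, this time on the master problem, yielding the third summand with prefactor $C/\sigma_{\min}(\mathbb{A}_{N_1}(\boldsymbol{\mu}_1))$. Assembling the three bounds produces the stated estimate.

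The main obstacle I expect is matching the precise form of the DEIM interpolation estimate used in the statement: pinning down which DEIM lemma (and in which norm/convention) produces the factor $\|\mathbf{\Phi}_{D_{\mid \mathcal{I}_{2,D}}}\|_2$ together with the residual $(\mathbb{I} - \mathbf{\Phi}_D \mathbf{\Phi}_D^{-1})\mathbf{u}_{2,D}$, and checking that the orthonormality of the POD basis $\mathbf{\Phi}_D$ justifies the simplification $\mathbf{\Phi}_D \mathbf{\Phi}_D^{-1}$ as the orthogonal projector onto its range. Once this DEIM lemma is invoked, everything else reduces to triangle inequalities and a single matrix inversion, which are routine.
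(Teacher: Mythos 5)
Your proposal is correct and follows essentially the same route as the paper's proof: the lifting decomposition plus triangle inequality, the add-and-subtract of $\mathbf{\Phi}_D \mathbf{\Phi}_{D_{\mid \mathcal{I}_{2,D}}}^{-1}\mathbb{U}\,\mathbf{u}_1(\boldsymbol{\mu}_1)$ to separate the DEIM interpolation error from the propagated master ROM error, the $1/\sigma_{\min}$ residual bounds, and the standard DEIM estimate for the interface term. The DEIM lemma you flag as a concern is exactly the one the paper invokes (citing its reference text) with $\mathbf{\Phi}_D\mathbf{\Phi}_D^{-1}$ understood as the orthogonal projector onto the range of the orthonormal POD basis, so no gap remains.
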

\begin{proof}
	Exploiting the lifting technique to apply the Dirichlet boundary conditions in $\eqref{eq:slave_lifting_eq}$, and using the triangular inequality, we can first split error $\eqref{eq.error_slave}$ in two contributions,
	\[ \begin{split}
	\|  \mathbf{u}_{2}(\boldsymbol{\mu}_2) - \mathbb{V}_2  \mathbf{u}_{n_2}(\boldsymbol{\mu}_2)\| _2 &= \|  \tilde{ \mathbf{u}}_{2}(\boldsymbol{\mu}_2) + \mathbf{u}_{2,D}(\boldsymbol{\mu}_2) - \mathbb{V}_2  \tilde{\mathbf{u}}_{n_2}(\boldsymbol{\mu}_2) - (\mathbb{V}_2 \mathbf{u}_{n_2}(\boldsymbol{\mu}_2))_{\mid \Gamma_2}\| _2 \\
	&\leq \|  \tilde{ \mathbf{u}}_{2}(\boldsymbol{\mu}_2) - \mathbb{V}_2  \tilde{\mathbf{u}}_{n_2}(\boldsymbol{\mu}_2) \| _2  +  \|  \mathbf{u}_{2,D}(\boldsymbol{\mu}_2) - (\mathbb{V}_2 \mathbf{u}_{n_2}(\boldsymbol{\mu}_2))_{\mid \Gamma_2}\| _2.
	\end{split}\]
	
	Then, since $(\mathbb{V}_2 \mathbf{u}_{n_2}(\boldsymbol{\mu}_2))_{\mid \Gamma_2}$ denote the computed interface Dirichlet data, according to $\eqref{eq.interface_data_reduction_eq}$ we can write
	\[ \begin{split}
	\|  \mathbf{u}_{2}(\boldsymbol{\mu}_2) - \mathbb{V}_2  \mathbf{u}_{n_2}(\boldsymbol{\mu}_2)\| _2 &\leq \|  \tilde{\mathbf{u}}_{2}(\boldsymbol{\mu}_2) - \mathbb{V}_2  \tilde{\mathbf{u}}_{n_2}(\boldsymbol{\mu}_2) \| _2 +  \|  \mathbf{u}_{2,D} - (\mathbb{V}_2 \mathbf{u}_{n_2}(\boldsymbol{\mu}_2))_{\mid \Gamma_2}\| _2 \\	 
	& =\|  \tilde{ \mathbf{u}}_{2}(\boldsymbol{\mu}_2) - \mathbb{V}_2  \tilde{\mathbf{u}}_{n_2}(\boldsymbol{\mu}_2) \| _2  +  \|  \mathbf{u}_{2,D}(\boldsymbol{\mu}) - \mathbf{\Phi}_D \mathbf{\Phi}_{D_{\mid \mathcal{I}_{2,D}}}^{-1}\mathbb{U}\mathbb{V}_1\mathbf{u}_{n_1}(\boldsymbol{\mu}_1)\| _2.\end{split}\]
	
	Adding and subtracting the same quantity $\mathbf{\Phi}_D\mathbf{\Phi}_{D_{\mid \mathcal{I}_{2,D}}}^{-1}\mathbb{U}\mathbf{u}_{1}(\boldsymbol{\mu}_1)$, we finally obtained a relation between the three computed errors of the following form
	\begin{equation}
	\label{Eq:error_correlation}
	\begin{split}
	\|  \mathbf{u}_{2}(\boldsymbol{\mu}_2) - \mathbb{V}_2  \mathbf{u}_{n_2}(\boldsymbol{\mu}_2)\| _2
	\leq & ~ \|  \tilde{ \mathbf{u}}_{2}(\boldsymbol{\mu}_2) -  \mathbb{V}_2\tilde{\mathbf{u}}_{n_2}(\boldsymbol{\mu}_2) \| _2 \\& \qquad \qquad +  \|  \mathbf{u}_{2,D}(\boldsymbol{\mu}_1) - \mathbf{\Phi}_D \mathbf{\Phi}_{D_{\mid \mathcal{I}_{2,D}}}^{-1}\mathbb{U} \mathbf{u}_{1}(\boldsymbol{\mu}_1)\| _2 \\& \qquad \qquad + \| \mathbf{\Phi}_D \mathbf{\Phi}_{D_{\mid \mathcal{I}_{2,D}}}^{-1}\mathbb{U}\| _2\|  \mathbf{u}_{1}(\boldsymbol{\mu}_1) - \mathbb{V}_1 \mathbf{u}_{n_1}(\boldsymbol{\mu}_1)\| _2.
	\end{split}\end{equation}
	Note that in the second term $\mathbf{u}_{2,D_{\mid \mathcal{I}_{2,D}}}(\boldsymbol{\mu}_1) = \mathbb{U}\mathbf{u}_{1}(\boldsymbol{\mu}_1)$, so that 
	\begin{equation}\label{Eq.error_DEIM} \|  \mathbf{u}_{2,D}(\boldsymbol{\mu}_1) - \mathbf{\Phi}_D \mathbf{\Phi}_{D_{\mid \mathcal{I}_{2,D}}}^{-1}\mathbb{U} ~\mathbf{u}_{1}(\boldsymbol{\mu}_1)\| _2 = \|  \mathbf{u}_{2,D}(\boldsymbol{\mu}_1) - \mathbf{\Phi}_D \mathbf{\Phi}_{D_{\mid \mathcal{I}_{2,D}}}^{-1}\mathbf{u}_{2,D_{\mid \mathcal{I}_{2,D}}}(\boldsymbol{\mu}_1)\| _2.\end{equation}
	Denoting by $C = \| \mathbf{\Phi}_D \mathbf{\Phi}_{D_{\mid \mathcal{I}_{2,D}}}^{-1}\mathbb{U}\| _2$, we can finally bound the above quantities by
	\begin{equation}\label{Eq:a_posteriori_POD}
	\begin{split}
	&\|  \tilde{ \mathbf{u}}_{2}(\boldsymbol{\mu}_2) -  \mathbb{V}_2\tilde{\mathbf{u}}_{n_2}(\boldsymbol{\mu}_2) \| _2 \leq \frac{1}{\sigma_{min}(\mathbb{A}_{N_2}(\boldsymbol{\mu}_2))} \| \mathbf{r}_{2}(\tilde{\mathbf{u}}_{2}(\boldsymbol{\mu}_2))\| _2,\\
	&\|  \mathbf{u}_{1}(\boldsymbol{\mu}_1) -  \mathbb{V}_1\mathbf{u}_{n_1}(\boldsymbol{\mu}_1) \| _2 \leq \frac{1}{\sigma_{min}(\mathbb{A}_{N_1}(\boldsymbol{\mu}_1))} \| \mathbf{r}_{1}(\mathbf{u}_{1}(\boldsymbol{\mu}_1))\| _2 
	\end{split}\end{equation}
	and  (see \cite{QuarteroniManzoniNegri2016}, chapters 3 and 10, for further details)
	$$ \|  \mathbf{u}_{2,D}(\boldsymbol{\mu}_1) - \mathbf{\Phi}_D \mathbf{\Phi}_{D_{\mid \mathcal{I}_{2,D}}}^{-1}\mathbb{U} \mathbf{u}_{1}(\boldsymbol{\mu}_1)\| _2  \leq \| \mathbf{\Phi}_{D_{\mid \mathcal{I}_{2,D}}}\| _2\| (\mathbb{I} - \mathbf{\Phi}_D \mathbf{\Phi}_D^{-1}) \mathbf{u}_{2,D}(\boldsymbol{\mu}_1)\| _2. \qedhere$$
\end{proof}

\subsection{Unsteady case}
\label{Subsec:error_time}
To find an estimate of the reduced error $\eqref{eq.error_slave}$ in the time-dependent case, following \cite{Haasdonk2011} and \cite{Wirtz2012}, we can define the generic residual 
\begin{equation}
\label{Eq:res_dynamical}
\mathbf{r}(t; \boldsymbol{\mu}) = \mathbb{A}_{N}(\boldsymbol{\mu})\mathbb{V}\mathbf{u}_{n}(t;\boldsymbol{\mu}) + \mathbf{f}_N(t;\boldsymbol{\mu}) - \mathbb{V}\frac{d}{dt}\mathbf{u}_n(t;\boldsymbol{\mu}) \qquad \forall t \in [0,T].
\end{equation}
Here, with a slight abuse of notation, we call $\mathbb{A}_N(\boldsymbol{\mu}) = - \mathbb{M}_{N}^{-1} \mathbb{A}_{N}(\boldsymbol{\mu})$ and $\mathbf{f}_{N}(t;\boldsymbol{\mu}) = \mathbb{M}_{N}^{-1} \mathbf{f}_{N}(t;\boldsymbol{\mu})$
according to the dynamical system formulation of equation $\eqref{Eq:discretization_formula_time}$, being $\boldsymbol{\mu} \in \mathscr{P}^d$ the parameters vector. For the sake of notation, we add $t$ to recall the time dependency of the model while we implicity consider the spatial dependency.

\begin{prop}
	Assuming that $\mathbb{A}_{N_i}$, $i=1,2$ are two time-invariant matrices and that their eigenvalues have non-negative real part for all parameters $\boldsymbol{\mu}_1 \in \mathscr{P}^{d_1}$ and $\boldsymbol{\mu}_2 \in \mathscr{P}^{d_2}$, respectively, then for each time $t \in [0,T]$, the following error estimate holds:
	\[
	\begin{split}
	\|  \mathbf{u}_{2}(t;\boldsymbol{\mu}_2) - \mathbb{V}_2  \mathbf{u}_{n_2}(t;\boldsymbol{\mu}_2)\|_2 \leq &C_2(\boldsymbol{\mu}_2)~\| \tilde{\mathbf{u}}_{2}(0;\boldsymbol{\mu}_2) - \mathbb{V}_2\tilde{\mathbf{u}}_{n_2}(0;\boldsymbol{\mu}_2)\| _2 + C_2(\boldsymbol{\mu}_2) \int_0^{t} \| \mathbf{r}_2(\tau;\boldsymbol{\mu}_2)\| _2 d\tau \smallskip \\
	&+ \| \mathbf{\Phi}_{D_{\mid \mathcal{I}_{2,D}}}\| _2\| (\mathbb{I} - \mathbf{\Phi}_D \mathbf{\Phi}_D^{-1}) \mathbf{u}_{2,D}(t;\boldsymbol{\mu}_2)\| _2 \smallskip \\
	&+ C_1(\boldsymbol{\mu}_1) C_3~ \|  \mathbf{u}_{1}(0;\boldsymbol{\mu}_1) - \mathbb{V}_1 \mathbf{u}_{n_1}(0;\boldsymbol{\mu}_1)\| _2 + C_1(\boldsymbol{\mu}_1) C_3 \int_0^{t} \| \mathbf{r}_1(\tau;\boldsymbol{\mu}_1)\| _2 d\tau,
	\end{split}
	\]
	where $C_1(\boldsymbol{\mu}_1)$ and $C_2(\boldsymbol{\mu}_2)$ are two constants such that
	$$ \sup_{t \in [0,T]} \| \exp(\mathbb{A}_{N_1}(\boldsymbol{\mu}_1) t)\| _2 \leq C_1(\boldsymbol{\mu}_1) \quad \text{and} \quad \sup_{t \in [0,T]} \| \exp(\mathbb{A}_{N_2}(\boldsymbol{\mu}_2) t)\| _2 \leq C_2(\boldsymbol{\mu}_2),$$
	and $C_3 = \| \mathbf{\Phi}_D \mathbf{\Phi}_{D_{\mid \mathcal{I}_{2,D}}}^{-1}\mathbb{U}\| _2$. Furthermore, $\mathbf{r}_i$, $i=1,2$ are the discrete residual $\eqref{Eq:res_dynamical}$ of the master and slave model with homogeneous Dirichlet boundary conditions, respectively, and $\mathbb{I}$ is the identity matrix.
\end{prop}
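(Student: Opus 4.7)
The plan is to mirror the proof of Proposition~\ref{Prop:error_steady} term by term, the only substantive change being that the algebraic residual estimate $\|e_i\|_2 \leq \sigma_{\min}(\mathbb{A}_{N_i})^{-1}\|\mathbf{r}_i\|_2$ used there is replaced by a Duhamel-type bound coming from the linear ODE satisfied by the POD error in the dynamical setting. The algebraic splitting of the slave error into a reduced-slave contribution, an interface (DEIM) contribution and a reduced-master contribution carries over verbatim, so only the two POD-Galerkin bounds really need to be re-derived.

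First I would apply the lifting $\mathbf{u}_2(t;\boldsymbol{\mu}_2) = \tilde{\mathbf{u}}_2(t;\boldsymbol{\mu}_2) + \mathbf{u}_{2,D}(t;\boldsymbol{\mu}_2)$, use the triangle inequality, and add and subtract $\mathbf{\Phi}_D \mathbf{\Phi}_{D\mid \mathcal{I}_{2,D}}^{-1}\,\mathbb{U}\,\mathbf{u}_1(t;\boldsymbol{\mu}_1)$ inside the interface term, exactly as was done to obtain $\eqref{Eq:error_correlation}$. The identity $\mathbf{u}_{2,D_{\mid \mathcal{I}_{2,D}}}(t;\boldsymbol{\mu}_1) = \mathbb{U}\mathbf{u}_1(t;\boldsymbol{\mu}_1)$ and the DEIM estimate recalled after $\eqref{Eq.error_DEIM}$ then immediately give, pointwise in $t$, the interface contribution $\|\mathbf{\Phi}_{D\mid \mathcal{I}_{2,D}}\|_2 \|(\mathbb{I} - \mathbf{\Phi}_D\mathbf{\Phi}_D^{-1})\mathbf{u}_{2,D}(t;\boldsymbol{\mu}_2)\|_2$, while $C_3 = \|\mathbf{\Phi}_D \mathbf{\Phi}_{D\mid \mathcal{I}_{2,D}}^{-1}\mathbb{U}\|_2$ appears as the submultiplicative prefactor in front of the master POD error.

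The new ingredient is the control of $e_1(t) = \mathbf{u}_1(t;\boldsymbol{\mu}_1) - \mathbb{V}_1 \mathbf{u}_{n_1}(t;\boldsymbol{\mu}_1)$ and $\tilde{e}_2(t) = \tilde{\mathbf{u}}_2(t;\boldsymbol{\mu}_2) - \mathbb{V}_2 \tilde{\mathbf{u}}_{n_2}(t;\boldsymbol{\mu}_2)$ over time. Using the dynamical-system formulation $\dot{\mathbf{u}}_{N_i} = \mathbb{A}_{N_i}\mathbf{u}_{N_i} + \mathbf{f}_{N_i}$ introduced before $\eqref{Eq:res_dynamical}$, together with the definition of the reduced residual, a direct subtraction shows that each error satisfies the affine linear ODE $\dot e_i = \mathbb{A}_{N_i}e_i + \mathbf{r}_i$. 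The variation-of-constants formula then yields
\[e_i(t) = \exp(\mathbb{A}_{N_i}t)\,e_i(0) + \int_0^t \exp\!\bigl(\mathbb{A}_{N_i}(t-\tau)\bigr)\, \mathbf{r}_i(\tau)\,d\tau,\]
and taking 2-norms, using submultiplicativity and the hypothesis $\sup_{t \in [0,T]} \|\exp(\mathbb{A}_{N_i}t)\|_2 \leq C_i(\boldsymbol{\mu}_i)$, produces $\|e_i(t)\|_2 \leq C_i\|e_i(0)\|_2 + C_i \int_0^t \|\mathbf{r}_i(\tau)\|_2\,d\tau$. Inserting these two bounds into the slave and master pieces of the three-term decomposition -- the master one multiplied by $C_3$ -- gives exactly the claimed inequality.

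The only real obstacle is the bookkeeping of the constants $C_i(\boldsymbol{\mu}_i)$: their finiteness is ensured by the spectral assumption on $\mathbb{A}_{N_i}$ (combined with the sign convention $\mathbb{A}_{N_i} \leftarrow -\mathbb{M}_{N_i}^{-1}\mathbb{A}_{N_i}$ adopted just before $\eqref{Eq:res_dynamical}$), but a sharper, computable value would require a diagonalization or a logarithmic-norm argument. Once $C_i$ is accepted as an abstract a~priori constant, every remaining step is either a triangle inequality, a submultiplicative matrix-norm bound, or the DEIM estimate already proved in Proposition~\ref{Prop:error_steady}.
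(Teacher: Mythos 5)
Your proposal is correct and follows essentially the same route as the paper: the same lifting/triangle-inequality/add-and-subtract decomposition into slave-POD, DEIM-interface, and master-POD contributions (with the $C_3$ prefactor), the same time-independent DEIM bound, and the same Duhamel-type estimate for the POD errors, which the paper obtains by citing Proposition 4.1 of \cite{Haasdonk2011} and proving it in Appendix \ref{App:error_estimation} via exactly the error ODE $\dot{\mathbf{e}} = \mathbb{A}_N\mathbf{e} + \mathbf{r}$ and variation of constants that you re-derive inline. Your remark about the sign convention $\mathbb{A}_N \leftarrow -\mathbb{M}_N^{-1}\mathbb{A}_N$ being what makes the constants $C_i(\boldsymbol{\mu}_i)$ finite is also consistent with the paper's treatment.
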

\begin{proof}
	Fixing a time instant $t \in [0,T]$, as in proposition $\ref{Prop:error_steady}$, it is possible to relate the slave error $\eqref{eq.error_slave}$ to the master and interface error according to equation $\eqref{Eq:error_correlation}$, namely
	\[
	\begin{split}
	\|  \mathbf{u}_{2}(t;\boldsymbol{\mu}_2) - \mathbb{V}_2  \mathbf{u}_{n_2}(t;\boldsymbol{\mu}_2)\| _2
	\leq & ~ \|  \tilde{ \mathbf{u}}_{2}(t;\boldsymbol{\mu}_2) -  \mathbb{V}_2\tilde{\mathbf{u}}_{n_2}(t;\boldsymbol{\mu}_2) \| _2 \\
	&+  \|  \mathbf{u}_{2,D}(t;\boldsymbol{\mu}_1) - \mathbf{\Phi}_D \mathbf{\Phi}_{D_{\mid \mathcal{I}_{2,D}}}^{-1}\mathbb{U} \mathbf{u}_{1}(t;\boldsymbol{\mu}_1)\| _2 \\&+ \| \mathbf{\Phi}_D \mathbf{\Phi}_{D_{\mid \mathcal{I}_{2,D}}}^{-1}\mathbb{U}\| _2\|  \mathbf{u}_{1}(t;\boldsymbol{\mu}_1) - \mathbb{V}_1 \mathbf{u}_{n_1}(t;\boldsymbol{\mu}_1)\| _2.
	\end{split}\]
	
	Moreover, following proposition 4.1 of \cite{Haasdonk2011} (see appendix $\ref{App:error_estimation}$ for the complete proof), given a time dependent problem reduced with a POD method, it holds that, for each $\boldsymbol{\mu} \in \mathscr{P}^{d}$,
	$$ \|  \mathbf{u}(t;\boldsymbol{\mu}) - \mathbb{V}\mathbf{u}_n(t;\boldsymbol{\mu}) \| _2  \leq C(\boldsymbol{\mu}) \left ( \|  \mathbf{u}(0;\boldsymbol{\mu}) - \mathbb{V}\mathbf{u}_n(0;\boldsymbol{\mu}) \| _2 + \int_0^t \| \mathbf{r}(\tau;\boldsymbol{\mu})\| _2 d\tau \right ),$$
	where $\sup_{t \in [0,T]}\| \exp(\mathbb{A}_N t)\| _2$ if $\mathbb{A}$ is time invariant and has eigenvalues with negative real part. This means that we can write:
	\[ \begin{split}\| \tilde{\mathbf{u}}_{2,h_2}(t;\boldsymbol{\mu}_2) - \mathbb{V}_2\tilde{\mathbf{u}}_{n_2}(t;\boldsymbol{\mu}_2)\| _2 ~\leq~ &C_2(\boldsymbol{\mu}_2) \| \tilde{\mathbf{u}}_{2}(0;\boldsymbol{\mu}_2) - \mathbb{V}_2\tilde{\mathbf{u}}_{n_2}(0;\boldsymbol{\mu}_2)\| _2 +C_2(\boldsymbol{\mu}_2)+ \int_0^t \| \mathbf{r}_2(\tau;\boldsymbol{\mu}_2)\| _2 d\tau\end{split}\]
	and
	\[ \begin{split} 
	\|  \mathbf{u}_{1}(t;\boldsymbol{\mu}_1) - \mathbb{V}_1 \mathbf{u}_{n_1}(t;\boldsymbol{\mu}_1)\| _2 ~\leq~ &C_1(\boldsymbol{\mu}_1) \|  \mathbf{u}_{1}(0;\boldsymbol{\mu}_1) - \mathbb{V}_1 \mathbf{u}_{n_1}(0;\boldsymbol{\mu}_1)\| _2  +C_1(\boldsymbol{\mu}_1) \int_0^{t} \| \mathbf{r}_1(\tau;\boldsymbol{\mu}_1)\| _2 d\tau.
	\end{split} \]
	
	Furthermore, as explained in Section $\ref{Sub_ROM_interface}$, the DEIM applied at the interface is independent from the time variable, which means that the corresponding error can be estimated as in the steady case \cite{QuarteroniManzoniNegri2016} as
	$$ \|  \mathbf{u}_{2,D}(t;\boldsymbol{\mu}_1) - \mathbf{\Phi}_D \mathbf{\Phi}_{D_{\mid \mathcal{I}_{2,D}}}^{-1}\mathbb{U} \mathbf{u}_{1}(t;\boldsymbol{\mu}_1)\| _2  \leq \| \mathbf{\Phi}_{D_{\mid \mathcal{I}_{2,D}}}\| _2\| (\mathbb{I} - \mathbf{\Phi}_D \mathbf{\Phi}_D^{-1}) \mathbf{u}_{2,D}(t;\boldsymbol{\mu}_1)\| _2.$$ 
	
	Hence, the proof is complete denoting $C_3 = \| \mathbf{\Phi}_D \mathbf{\Phi}_{D_{\mid \mathcal{I}_{2,D}}}^{-1}\mathbb{U}\| _2$.
\end{proof}

\section{Numerical results}
\label{Sub_numerical_test}
In this Section we investigate the numerical performances of the proposed reduced strategies by means of three one-way coupled problems. We present a detailed comparison of the numerical results by looking at their efficiency and accuracy, exploring the behavior of ROM coupled problems when dealing with a steady-steady problem, an unsteady-steady problem and an unsteady-unsteady problem. All simulations, both in the online and offline stages, are performed in serial on a notebook with Intel Core i7-10710U processor and 16 GB of RAM. The mathematical models and numerical methods presented in this Section have been implemented in C++ and Python languages and are based on life$^{\text{x}}$ (\url{https://lifex.gitlab.io}), a new in-house high-performance C++ FE library mainly focused on cardiac applications based on deal.II FE core \cite{dealII93} (\url{https://www.dealii.org}).

\subsection{Test case $i$: steady model - steady model}
Let us consider a time independent coupled problem made by a reaction-diffusion problem and a Laplacian as master and slave models, respectively, with suitable boundary conditions, meaning:
\begin{equation}
\begin{cases}
- \nabla \cdot (\alpha \nabla u) + \beta u = f &\text{in } \Omega_1\\
u = 0 &\text{on }\partial \Omega_{1,D}\backslash \Gamma \\
\frac{\partial u}{\partial n_1}= 0 &\text{on }\Gamma,
\end{cases}
\end{equation} 
and 
\begin{equation}
\begin{cases}
- \Delta v = 0 &\text{in }\Omega_2\\
\frac{\partial v}{\partial n_2} = 0 &\text{on }\partial \Omega_{2,N},
\end{cases}
\end{equation}
coupled with the following Dirichlet boundary conditions at the interface
$$v = u \quad \text{on } \Gamma.$$
We define $f(x,y,z) = \frac{\pi}{4}y x^2 \sin \left( \frac{\pi}{2} y\right) e^{z-1}$, and we vary the two parameters $\alpha$ and $\beta$ in $[0.5,5]$ with a latin hypercube sampling (LHS) distribution.

The models are solved in two three-dimensional domains represented by two concentric hollow spheroids centered in the origin. In particular, we define $\Omega_1$ as the internal spheroids with inner and outer radius equal to $0.5~m$ and $1.5~m$, while $\Omega_2$ is the external one, with inner and outer radius equal to $1.5~m$ and $3.5~m$. Therefore, the interface $\Gamma$ between the two domains is the spherical surface corresponding to the external boundary $\Gamma_1$ of $\Omega_1$ and the internal boundary $\Gamma_2$ of $\Omega_2$ (see Fig. $\ref{Fig:RD_domains}$).

We solve this coupled problem first considering a different discretization, meaning $h_1 \not= h_2$ on $\Omega_1$ and $\Omega_2$ and the same FEM order and, then, considering the same discretization on $\Omega_1$ and $\Omega_2$ but different FE order, meaning $q_1 \not = q_2$.

\begin{figure}[h!]
	\centering
	\begin{subfigure}{.3\textwidth}
		\centering
		\includegraphics[width=1\textwidth]{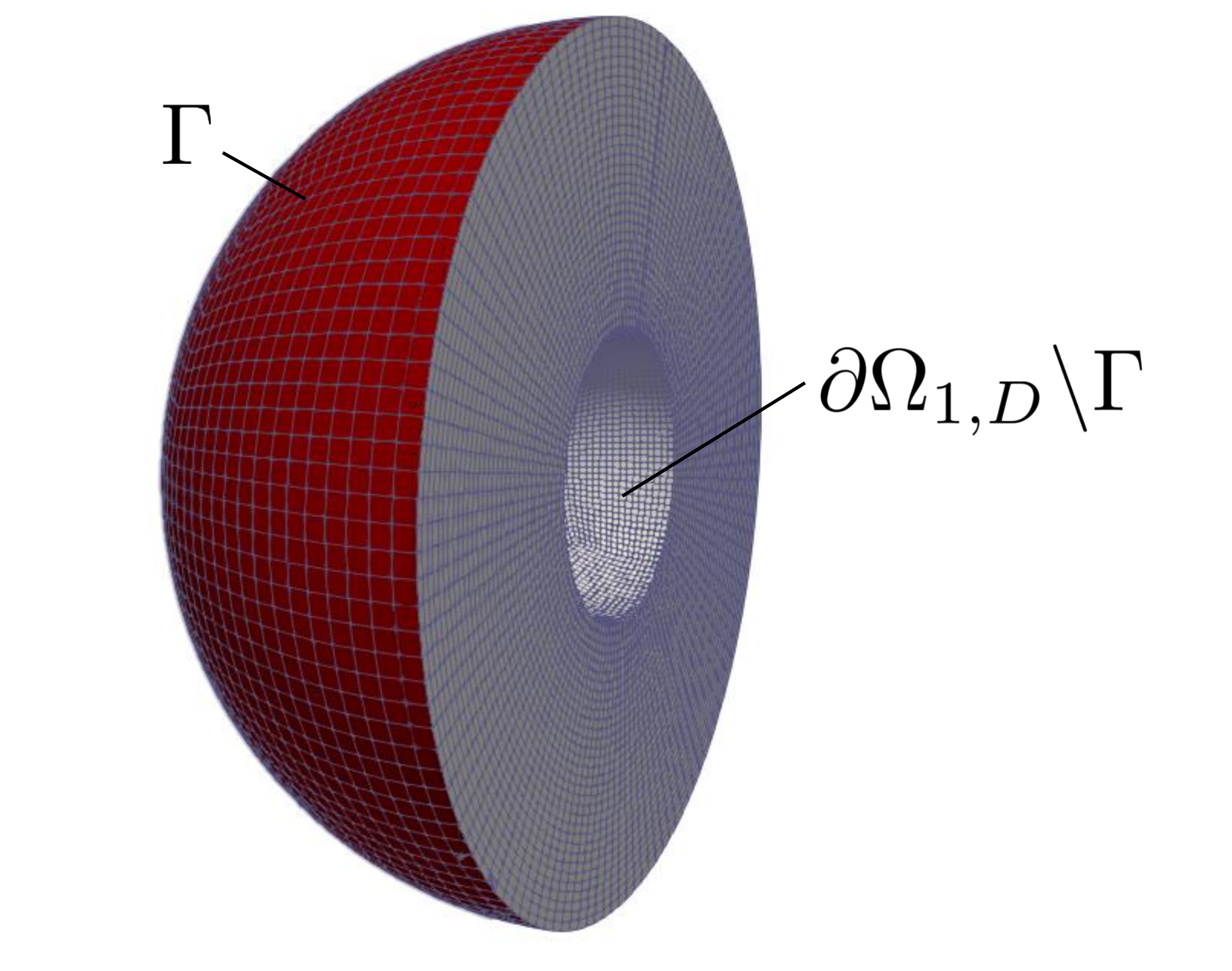}
		\label{slave_domain}
	\end{subfigure}
	\hfill
	\begin{subfigure}{.3\textwidth}
		\centering
		\includegraphics[width=0.87\textwidth]{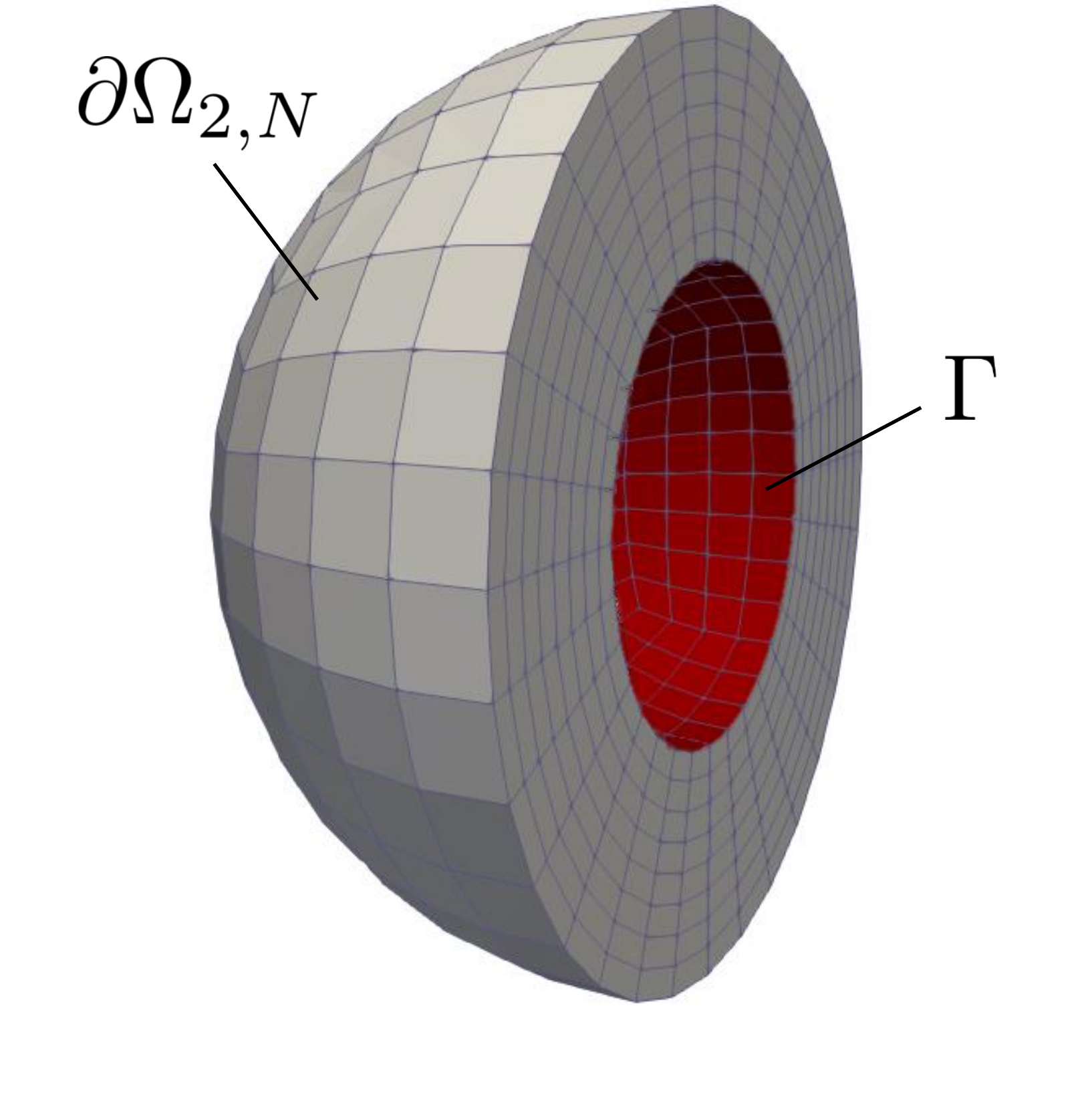}
		\label{master_domain}
	\end{subfigure}
	\hfill
	\begin{subfigure}{.3\textwidth}
		\centering
		\includegraphics[width=1\textwidth]{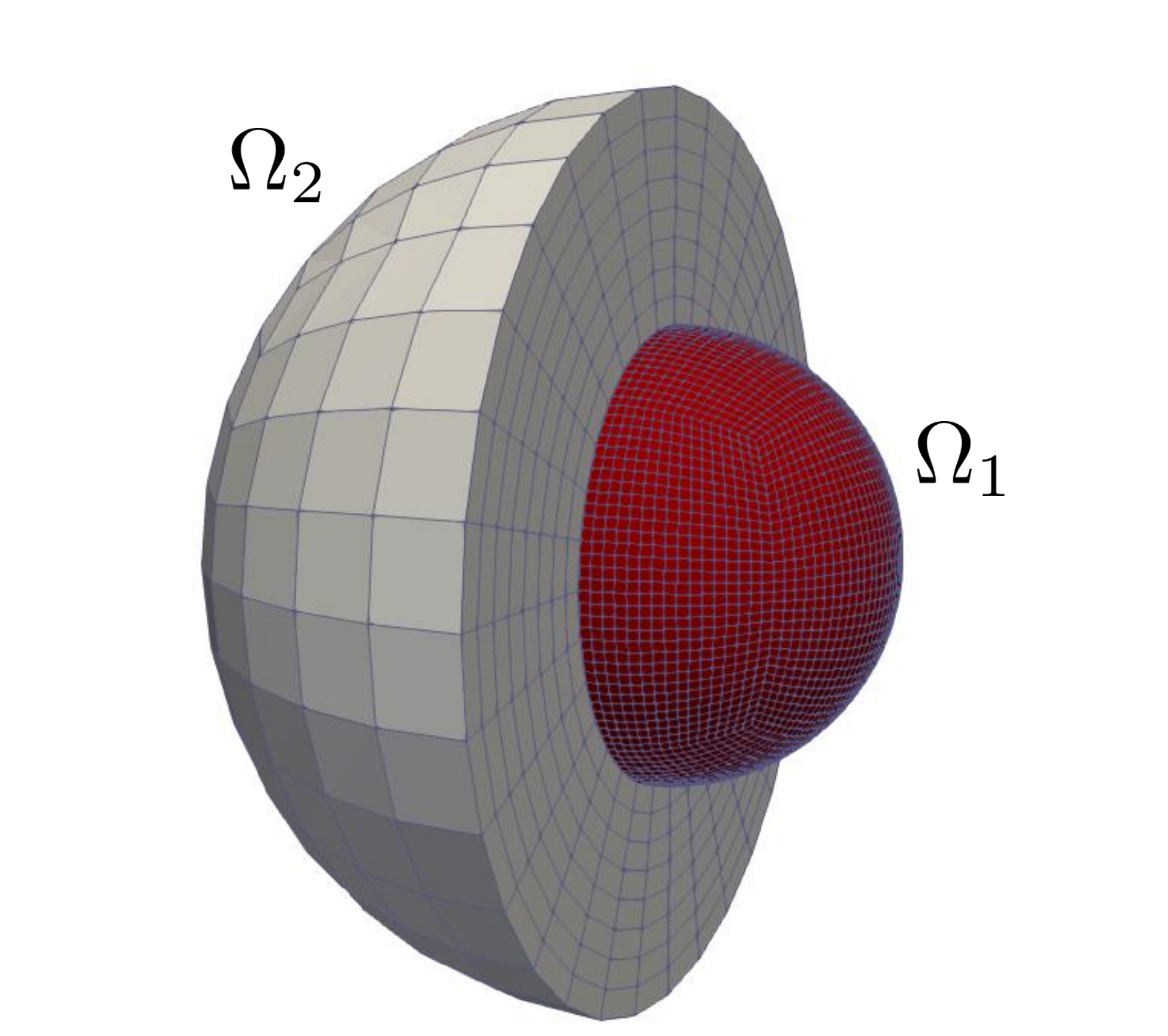}
		\label{RD_domains}
	\end{subfigure}
	\vspace{-0.25cm}
	\caption{\emph{Test case i - different discretizations.} Master (left) and slave (center) domains, two hollow spheroids discretized with different meshes, \emph{i.e.}  $h_1 = 0.107953~m$ and $h_2 = 0.421191~m$. The master domain is inside the slave domain (right). In red the interface boundary $\Gamma$.}
	\vspace{-0.25cm}
	\label{Fig:RD_domains}
\end{figure}

\subsection{Different discretization}
Fig. $\ref{fig:snapshots_probl_1}$ shows some numerical solutions obtained with the FOM considering FEM-$\mathbb{Q}_1$ for both master and slave models but different discretizations. In particular, we choose $h_1 = 0.107953~m$ and $h_2 = 0.421191~m$ so that $N_1 = 202818$ and $N_2 = 26146$ (see Fig. $\ref{Fig:RD_domains}$).

\begin{figure}[h!]
	\centering
	\begin{subfigure}{.3\textwidth}
		\centering
		\includegraphics[width=1\textwidth]{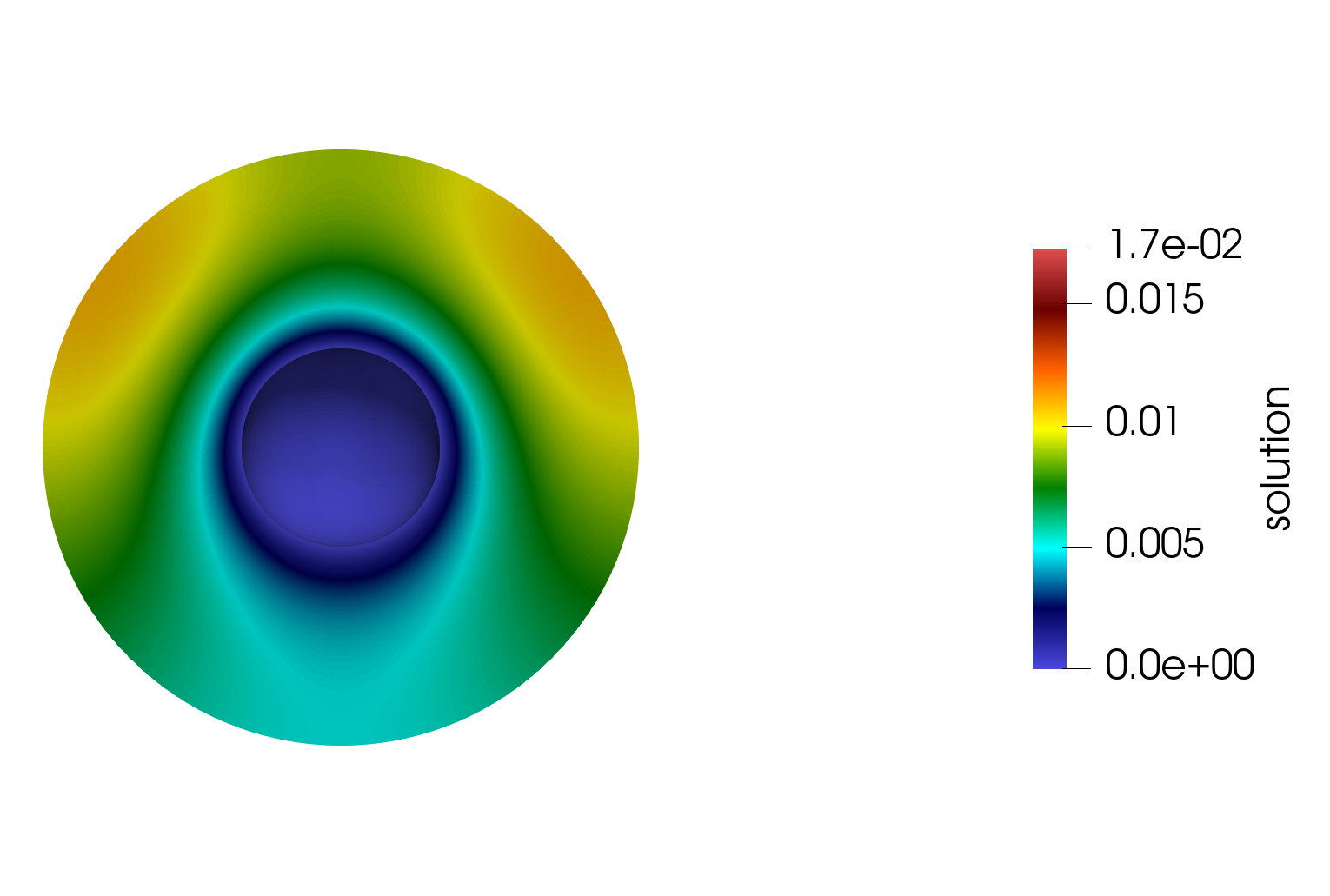}
	\end{subfigure}
	$\quad$
	\begin{subfigure}{.3\textwidth}
		\centering
		\includegraphics[width=1\textwidth]{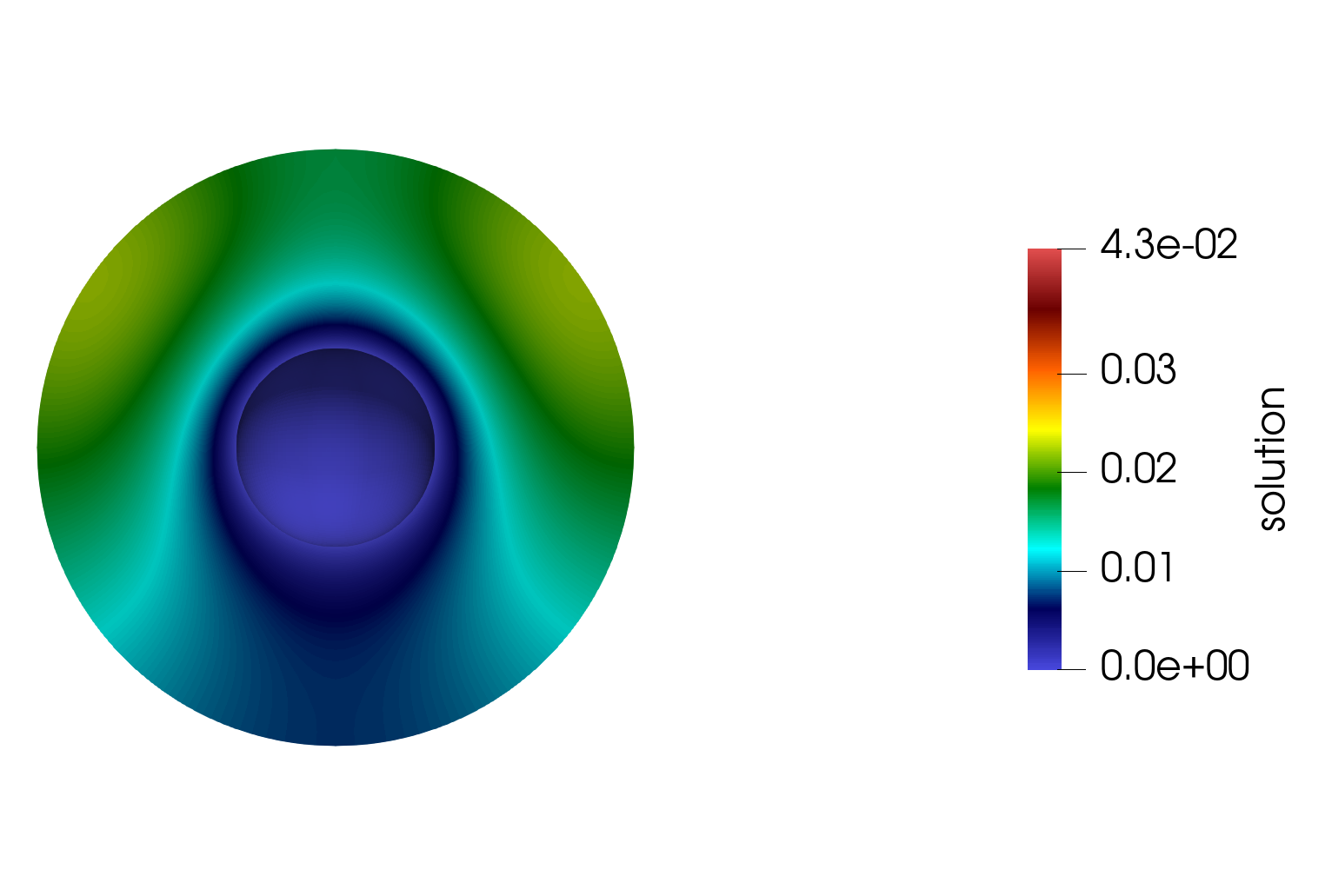}
	\end{subfigure}
	$\quad$
	\begin{subfigure}{.3\textwidth}
		\centering
		\includegraphics[width=1\textwidth]{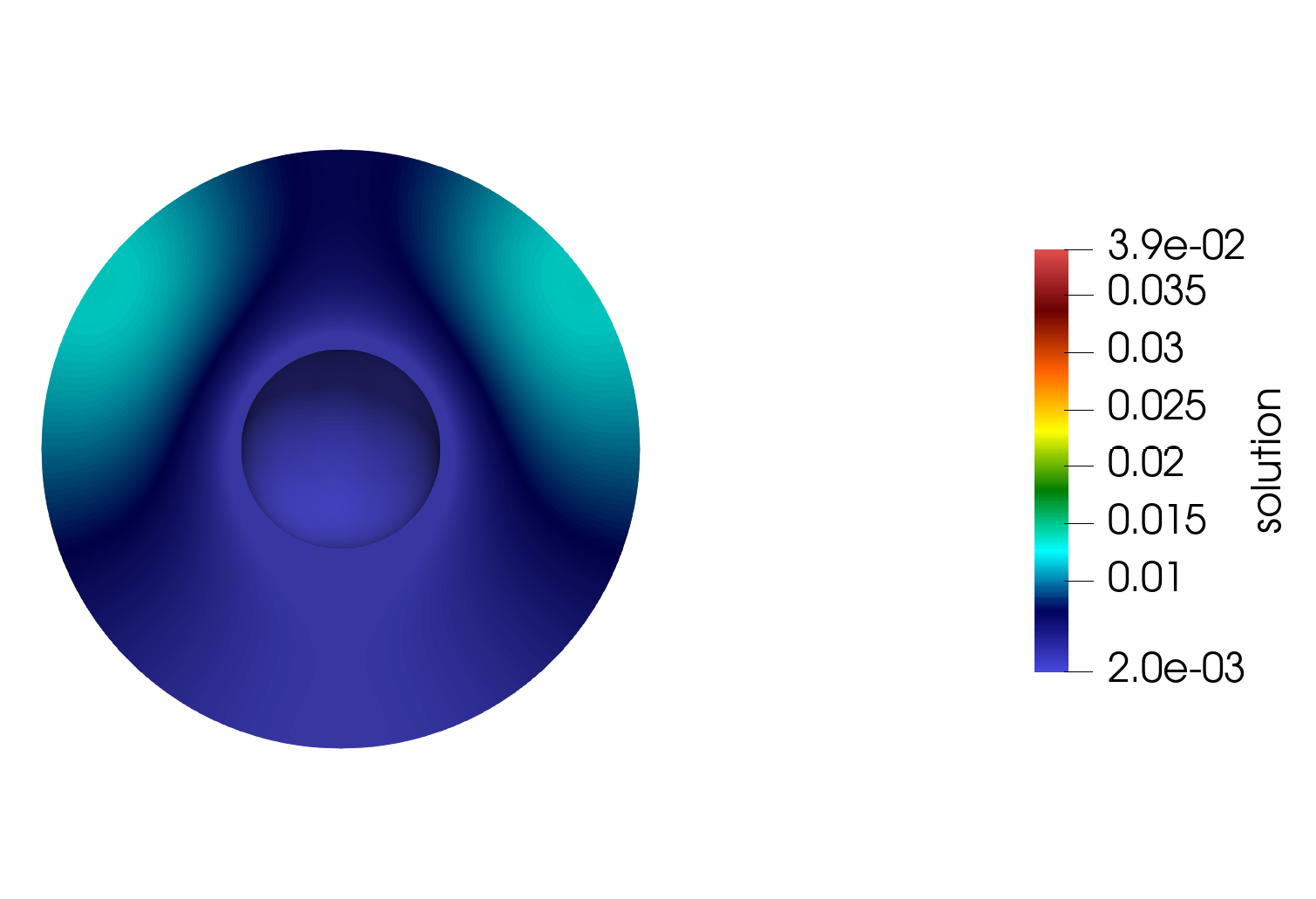}
	\end{subfigure} \\
	\begin{subfigure}{.3\textwidth}
		\centering
		\includegraphics[width=1\textwidth]{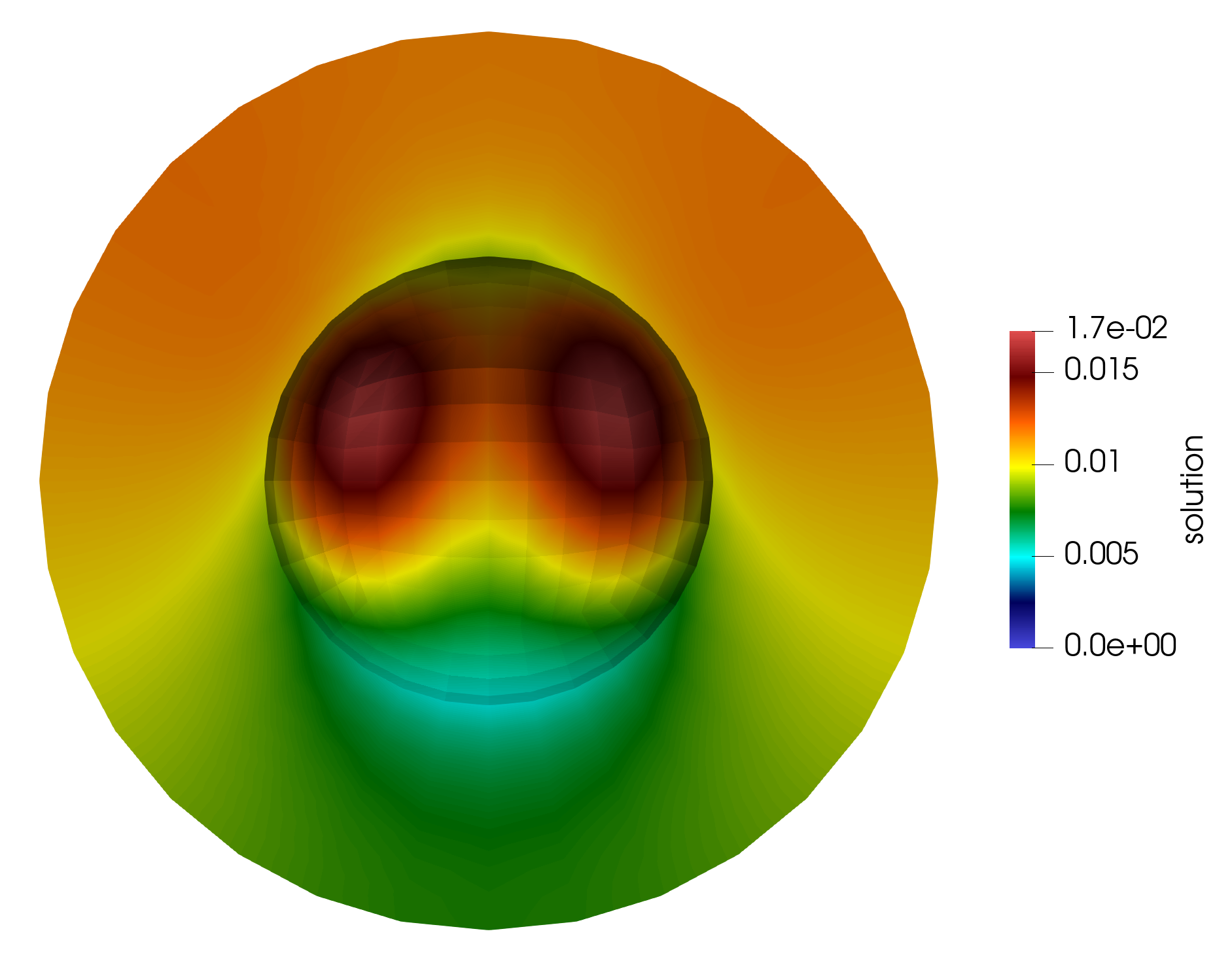}
	\end{subfigure}
	$\quad$
	\begin{subfigure}{.3\textwidth}
		\centering
		\includegraphics[width=1\textwidth]{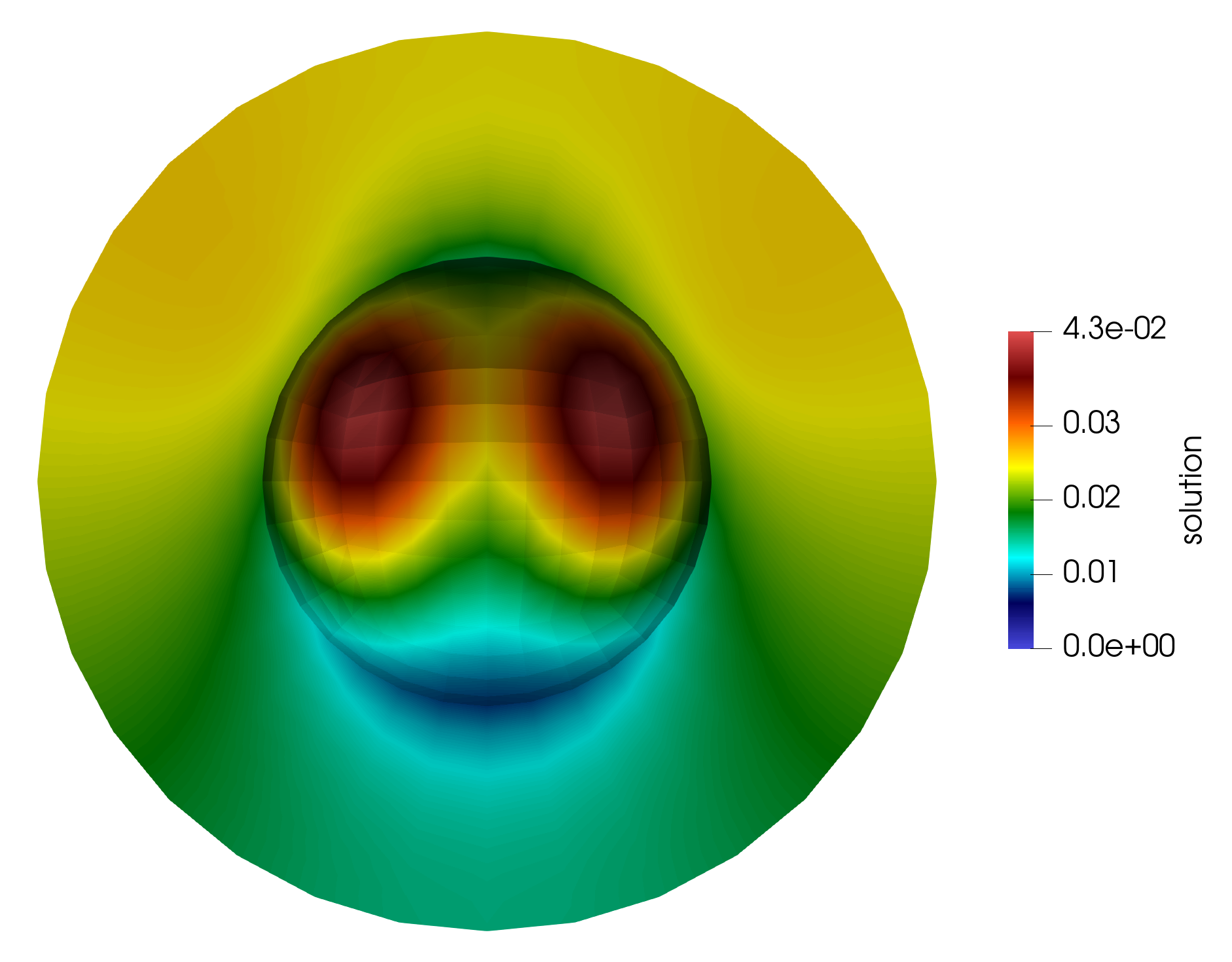}
	\end{subfigure}
	$\quad$
	\begin{subfigure}{.3\textwidth}
		\centering
		\includegraphics[width=1\textwidth]{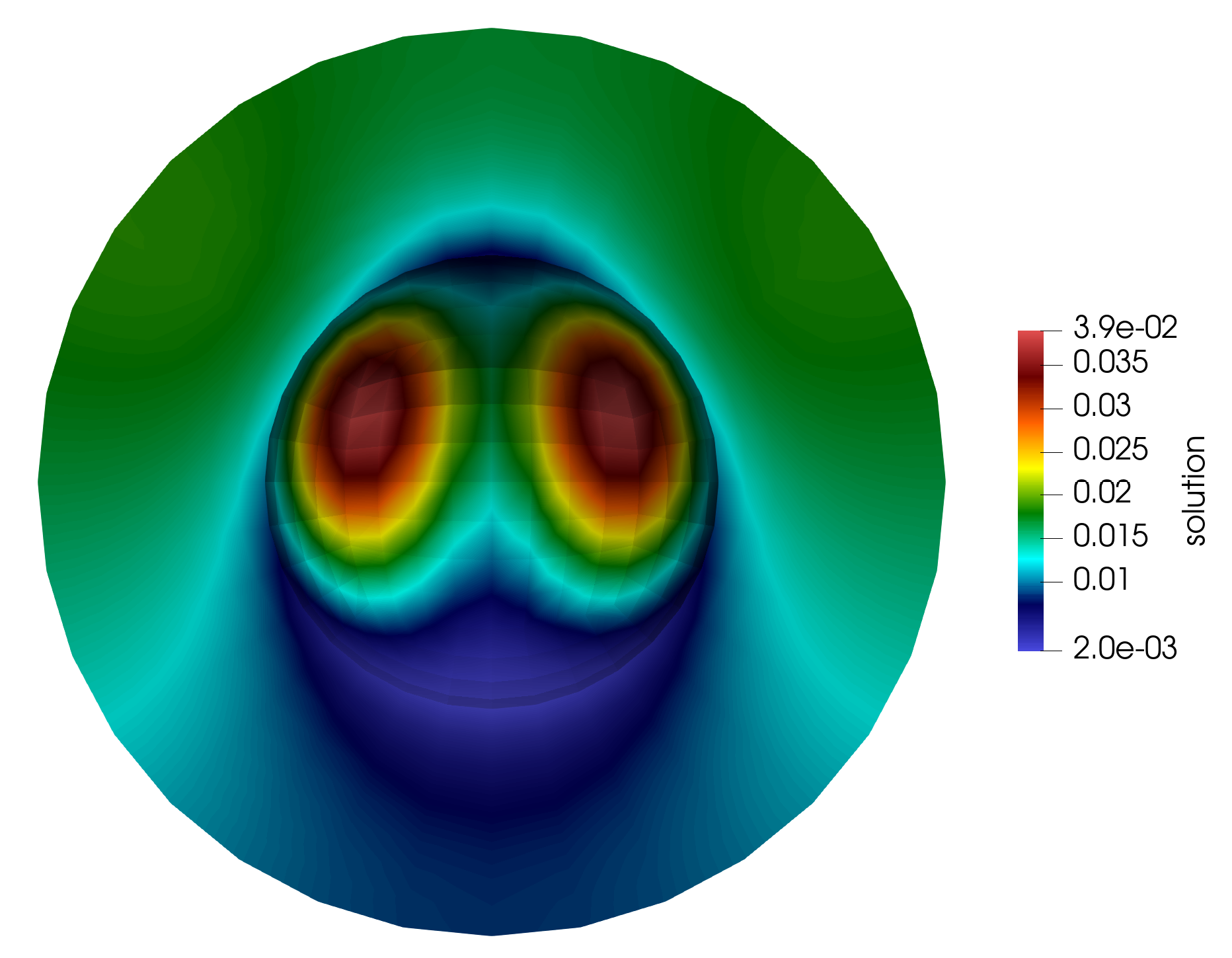}
	\end{subfigure}
	\caption{\emph{Test case i - different discretizations.} Master (top) and slave (bottom) solutions for three different instances of the parameters vector $\boldsymbol{\mu} = [\alpha, \beta]$.}
	\label{fig:snapshots_probl_1}
\end{figure} 

In the training phase, the high fidelity Dirichlet interface data are interpolated on the slave interface using the VTK life$^{\text{x}}$ function. Such function reads reference VTK data from a polygonal surface during construction, performs a linear interpolation of the point data array of the reference surface, and returns as output the results of such interpolation at specific input points. Therefore, given a parameters set, during the computation, \emph{(i)} the master solution is solved and the solution vector are save in Paraview readable files, \emph{(ii)} then, the reference surface is extracted manually from the master solution using the ParaView software in a post-processing procedure and save as a VTP file and, finally, \emph{(iii)} in the slave assembling of the system, the VTK function reads the data from such file and compute the interface vector $\mathbf{u}_{2,D}$ through the linear interpolation on the slave interface DoFs. Unfortunately, this method depends on the user expertise and is quite expensive, especially when the interface surface has a large dimension and the discretization used is very fine. For the test cases in this paper, with the considered domains and discretizations, we have measured an extraction costs of about 6 minutes for each simulation. We remark that different machine characteristics and less experience from the user might  rapidly increase the total computational cost of the interface treatments.

After the interpolation method has been applied, $\mathbf{u}_{2,D}$ can be stored in matrix format and used for the interface DEIM training, so that the POD-DEIM-POD ROM can be constructed according to Algorithm $\ref{alg:CompleteCoupledROM}$.

Then, we first evaluate the singular values decay of the master and slave solutions and interface data by varying the dimension $N_{train} = \{ 20, 40, 60, 80, 100\}$ of the training set. The decay of the singular values, reported in Fig. $\ref{Fig:singular_value_RD_mesh}$, show that the training set needed to get a sufficiently rich reduction is at least $N_{train} = 60$. Moreover, the eigenvalues decay of the slave solution and interface data are very similar, conveying the strong dependency of the slave solution from the Dirichlet data. We then select additional $N_{test} = 50$ values of the parameters vector to test our method.

\begin{figure}[h!]
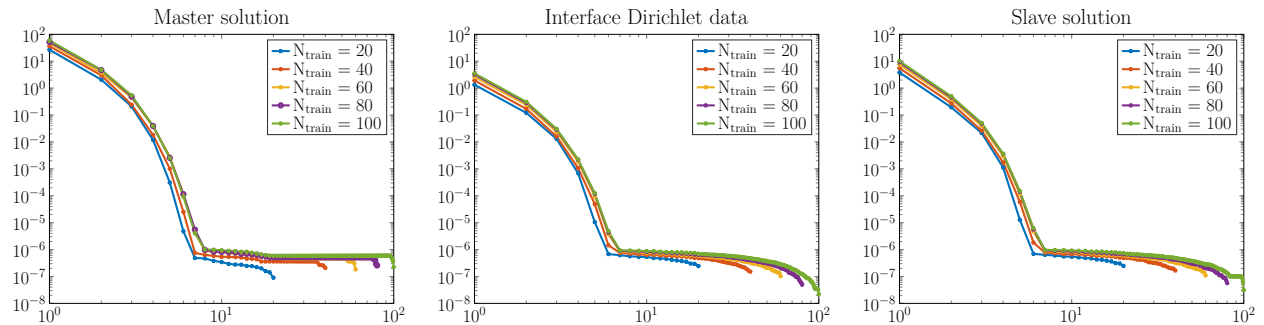

	\centering
	\begin{subfigure}{.32\textwidth}
		\centering
		\includegraphics[width=1\textwidth]{./Images/RD_eigenvalues_master.tex}
	\end{subfigure}\hfill
	\begin{subfigure}{.32\textwidth}
		\centering
		\includegraphics[width=1\textwidth]{./Images/RD_eigenvalues_DEIM.tex}
	\end{subfigure}\hfill
	\begin{subfigure}{.32\textwidth}
		\centering
		\includegraphics[width=1\textwidth]{./Images/RD_eigenvalues_slave.tex}
	\end{subfigure}
	\caption{\emph{Test case i - different discretizations.} Singular values decay of the master solution (left), interface Dirichlet data (center) and slave solution (right). }
	\label{Fig:singular_value_RD_mesh}
\end{figure}

The POD technique applied to reduce the master model is standard, thus we consider only the slave error as proof of the good ability of our model to reconstruct the correct solution (see Section $\ref{Sub_error_estimate}$). In particular, we define the absolute slave error as the mean of the 2-norm error $\eqref{eq.error_slave}$ over the $N_{test}$ solutions. 
Fig. $\ref{fig:RD_fixed_DEIM_error}$ and $\ref{fig:RD_fixed_master_error}$ show the errors computed fixing the prescribed POD tolerance used to reduced the master model and interface data, respectively. We recall that the slave solution is dependent on the interface Dirichlet data which in turn is influenced by the master ROM solution. Thus, the slave error depends both on the master POD and interface DEIM errors. In particular, as expected, a good approximation of the master solution but not of the interface data (and vice versa) yields a high error for the slave solution independently on the slave reduction operated, \emph{e.g.} considering an accuracy of magnitude $10^{-5}$ on the slave reduction but only an accuracy of magnitude $10^{-2}$ for the master or interface reduction returns an overall slave error of $10^{-2}$. Hence, a good approximation of all quantities is required to get a good estimate of the final solution, \emph{i.e} on average, the same order of accuracy for each step of the reduction must be imposed. 

\begin{figure}[h!]
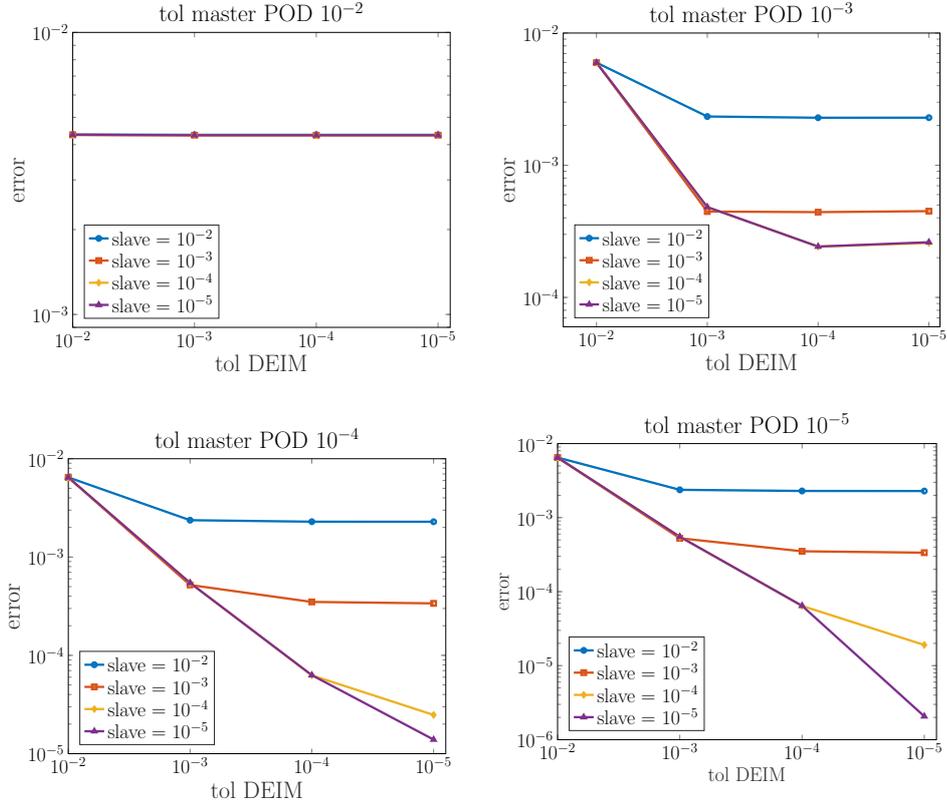

	\centering
	\begin{subfigure}{.3\textwidth}
		\centering 
		\includegraphics[width=1.2\textwidth]{./Images/RD_error_fix_master_1e-2.tex}
	\end{subfigure}\hspace{40pt}
	\begin{subfigure}{.3\textwidth}
		\centering 
		\includegraphics[width=1.2\textwidth]{./Images/RD_error_fix_master_1e-3.tex}
	\end{subfigure}\\
	\bigskip
	\begin{subfigure}{.3\textwidth}
		\centering 
		\includegraphics[width=1.2\textwidth]{./Images/RD_error_fix_master_1e-4.tex}
	\end{subfigure}\hspace{40pt}
	\begin{subfigure}{.3\textwidth}
		\centering 
		\includegraphics[width=1.2\textwidth]{./Images/RD_error_fix_master_1e-5.tex}
		\label{Fig:RD_fixed_master_5}
	\end{subfigure}
	\vspace{-0.25cm}
	\caption{\emph{Test case i - different discretizations.} $L^2(\Omega_2)$ mean slave solution error ($y$-axis) over $N_{test}$ trial fixing the master POD tolerance and varying the DEIM tolerance ($x$-axis) and the slave POD tolerance (legend).}
	\vspace{-0.25cm}
	\label{fig:RD_fixed_master_error}
\end{figure}
\begin{figure}[h!]
	\centering
	\begin{subfigure}{.3\textwidth}
		\includegraphics[width=1.2\textwidth]{./Images/RD_error_fix_DEIM_1e-2.tex}
	\end{subfigure}\hspace{40pt}
	\begin{subfigure}{.3\textwidth}
		\includegraphics[width=1.2\textwidth]{./Images/RD_error_fix_DEIM_1e-3.tex}
	\end{subfigure}\\
	\bigskip
	\begin{subfigure}{.3\textwidth}
		\includegraphics[width=1.2\textwidth]{./Images/RD_error_fix_DEIM_1e-4.tex}
	\end{subfigure}\hspace{40pt}
	\begin{subfigure}{.3\textwidth}
		\includegraphics[width=1.2\textwidth]{./Images/RD_error_fix_DEIM_1e-5.tex}
		\label{Fig:RD_fixed_DEIM_5}
	\end{subfigure}
	\vspace{-0.25cm}
	\caption{\emph{Test case i - different discretizations.} $L^2(\Omega_2)$ mean slave solution error ($y$-axis) over $N_{test}$ trial fixing the DEIM tolerance and varying the master POD tolerance ($x$-axis) and the slave POD tolerance (legend).}
	\label{fig:RD_fixed_DEIM_error}
\end{figure}

Regarding the efficiency, Fig. $\ref{Fig:RD_error_vs_time}$ reports the variations of the computational errors fixing the prescribed POD accuracy of the master and interface data reduction \emph{versus} CPU time. In particular, the graphs in figure $\ref{Fig:RD_error_vs_time}$ correspond to those of Fig. $\ref{fig:RD_fixed_master_error}$ and  $\ref{fig:RD_fixed_DEIM_error}$ when the reduction accuracy is $10^{-5}$, plotting the CPU time in the \emph{x} axis. We observe that increasing the POD accuracy in one of the three reduction steps does not dramatically increase the final computational cost, especially when the prescribed tolerance for the interface reduction is fixed, entailing that the major computational cost is given by the master ROM solution. For examples, fixing the master POD accuracy to $10^{-5}$, prescribing an accuracy of order of magnitude $10^{-5}$ for the slave and interface reduction will cause an increase of only $0.0044 s$ in the computational costs of the solution with respect to the same computation with prescribed tolerance of order $10^{-2}$ for slave and interface reduction, corresponding to an increasing of only the 0.23\% of the total computational costs, which is reduced to 0.04\% if in the less accurate simulation the prescribed accuracy are of order of $10^{-4}$ and $10^{-5}$ for the Dirichlet data and slave reduction, respectively. Therefore, a very accurate reduction can be obtained without loosing ROM efficiency. 

\begin{figure}[h!]
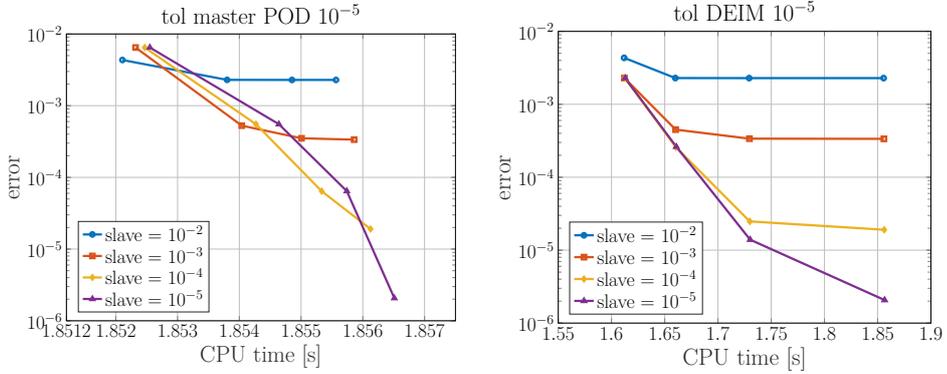

	\centering
	\begin{subfigure}{.3\textwidth}
		\includegraphics[width=1.2\textwidth]{./Images/RD_error_vs_time_fixed_master.tex}
	\end{subfigure}\hspace{40pt}
	\begin{subfigure}{.3\textwidth}
		\includegraphics[width=1.2\textwidth]{./Images/RD_error_vs_time_fixed_DEIM.tex}
	\end{subfigure}
	\vspace{-0.25cm}
	\caption{\emph{Test case i - different discretizations.} $L^2(\Omega_2)$ mean slave solution error vs the CPU time fixing the master POD tolerance (left) and the interface DEIM tolerance (right) to $10^{-5}$ and varying the tolerances used for the reduction of the other quantities.}
	\vspace{-0.25cm}
	\label{Fig:RD_error_vs_time}
\end{figure}

Finally, in Table $\ref{Tab:RD_mesh}$ we report the dimensions and performances of the FOM and ROM offline and online stages for one instance of the parameters vector. We choose to fix the master, slave POD and interface data reduction tolerances to $10^{-5}$. Up to a very expensive offline phase, according to Fig. $\ref{Fig:RD_error_vs_time}$, a satisfying speed up of about 200 times is obtained from the slave model given that, during the online reduced computation, the data reading and interpolation procedure is avoided, saving up about 98\% of the computational costs of the interface extraction. However, the greatest computational cost reduction is gained by the complete coupled problem due to the absence of the manual interface extraction method, which means a saving up of 100\% on the interface extraction.

\begin{table}[h!]
	\centering 
	\begin{tabular}{c| cc| cccc}
		\toprule
		& \multicolumn{2}{c| }{High fidelity model} &\multicolumn{4}{c}{Reduced order model} \\ 
		&\#FE &FE solution &\#RB &Offline&Online &Speed up \\
		& DoFs & time &&time &time \\
		\hline &&&&&&\\ [-2ex]
		Master model &202$k$ &$\sim$ 19.76$s$ &8 &$\sim$ 1901$s$ &$\sim$ 1.82$s$ &\cellcolor{red!25}10.9$x$\\
		Slave model &26$k$ &$\sim$ 1.85$s$ &6 &$\sim$ 103$s$ &$\sim$ 0.04$s$ &\cellcolor{red!15}46.3$x$\\
		Interface data & &\cellcolor{red!25}$\sim$ 6$m$ &7 &$\sim$ 484$s$  & \cellcolor{green!25}0.00$s$&\\ 
		Coupled model & &$\sim$ 381.61$s$ & &$\sim$ 2488$s$ &$\sim$ 1.86$s$ &\cellcolor{green!25}205.2$x$\\
		\bottomrule
	\end{tabular}
	\caption{\emph{Test case i - different discretizations.} High fidelity and reduced order model dimensions and CPU times.  We highlight the performances of the ROM model with respect to the interface Dirichlet data treatment and the speed up using colors from red (worst) to green (best).}
	\label{Tab:RD_mesh}
\end{table}	

\subsection{Different FE order}
We repeat the same experiment considering equal discretization for the two domains and different FE orders. Specifically, we choose $q_1 = 2$ and $q_2 = 1$, and $h_1 = h_2 = 0.421191$ so that $N_1 = 202818$ and $N_2 = 26146$.

As expected, the eigenvalues decays outcome is that of the previous test case (we refer to Fig. $\ref{Fig:singular_value_RD_mesh}$), and we select again $N_{test} = 50$ values of parameters vector to test the coupled ROM. In Fig. $\ref{fig:RD_fixed_master_error_order}$ and $\ref{fig:RD_fixed_DEIM_error_order}$ we report the slave error $\eqref{eq.error_slave}$ over the $N_{test}$ trial fixing the master POD and interface data prescribed tolerances, respectively. Once more, a good approximation of the master solution and the interface Dirichlet data provides a good approximation of the slave solution. Moreover, the influence of the master solution seems to be higher than before, since the decrease of the errors is faster fixing the interface DEIM tolerance than the corresponding master one.

\begin{figure}[h!]
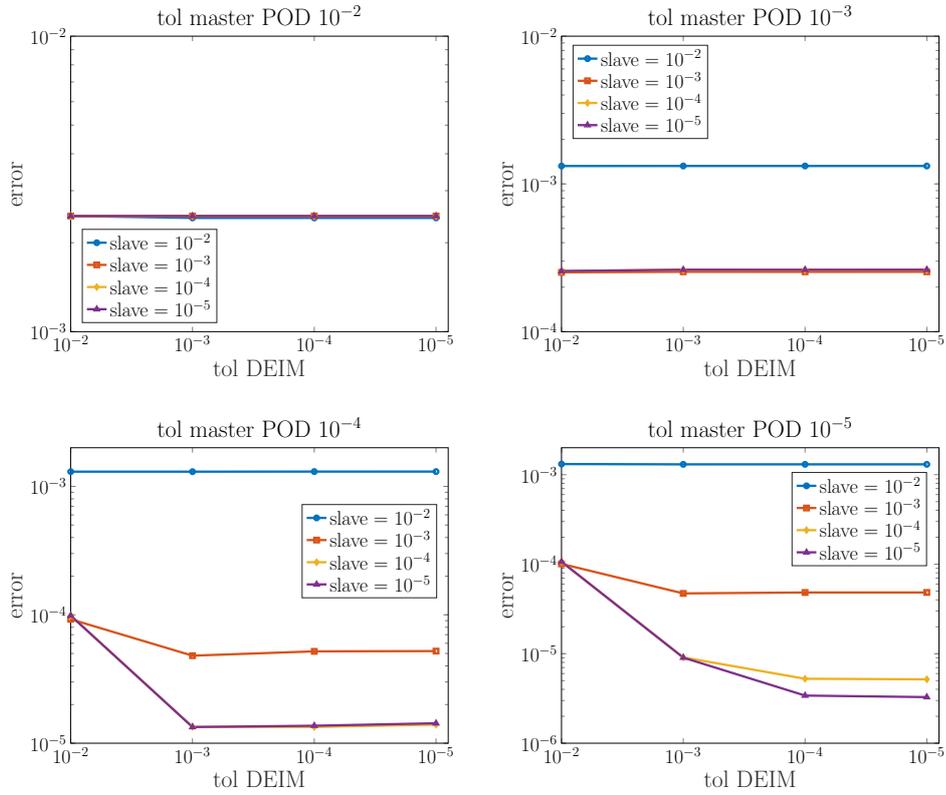

	\centering
	\begin{subfigure}{.3\textwidth}
		\centering 
		\includegraphics[width=1.2\textwidth]{./Images/RD_error_fix_master_1e-2_order.tex}
	\end{subfigure}\hspace{40pt}
	\begin{subfigure}{.3\textwidth}
		\centering 
		\includegraphics[width=1.2\textwidth]{./Images/RD_error_fix_master_1e-3_order.tex}
	\end{subfigure}\\
	\bigskip
	\begin{subfigure}{.3\textwidth}
		\centering 
		\includegraphics[width=1.2\textwidth]{./Images/RD_error_fix_master_1e-4_order.tex}
	\end{subfigure}\hspace{40pt}
	\begin{subfigure}{.3\textwidth}
		\centering 
		\includegraphics[width=1.2\textwidth]{./Images/RD_error_fix_master_1e-5_order.tex}
	\end{subfigure}
	\caption{\emph{Test case i - different FE order.} $L^2(\Omega_2)$ mean slave solution error ($y$-axis) over $N_{test}$ trial fixing the master POD tolerance and varying the DEIM tolerance ($x$-axis) and the slave POD tolerance (legend).}
	\label{fig:RD_fixed_master_error_order}
\end{figure}
\begin{figure}[h!]
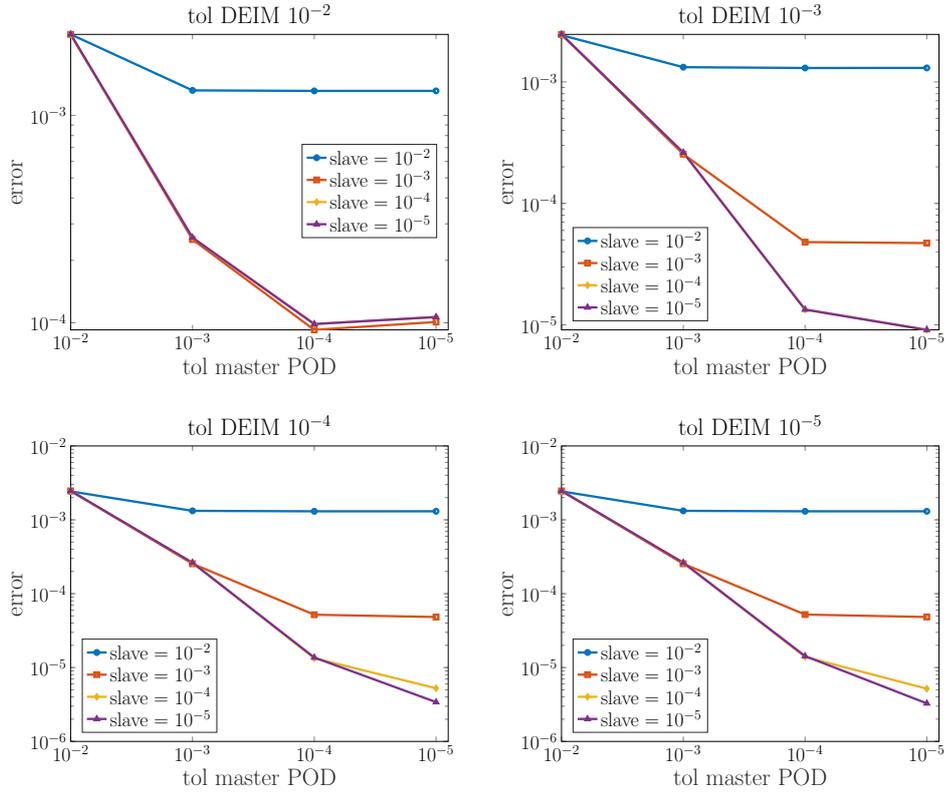

	\centering
	\begin{subfigure}{.3\textwidth}
		\includegraphics[width=1.2\textwidth]{./Images/RD_error_fix_DEIM_1e-2_order.tex}
	\end{subfigure}\hspace{40pt}
	\begin{subfigure}{.3\textwidth}
		\includegraphics[width=1.2\textwidth]{./Images/RD_error_fix_DEIM_1e-3_order.tex}
	\end{subfigure}\\
	\bigskip
	\begin{subfigure}{.3\textwidth}
		\includegraphics[width=1.2\textwidth]{./Images/RD_error_fix_DEIM_1e-4_order.tex}
	\end{subfigure}\hspace{40pt}
	\begin{subfigure}{.3\textwidth}
		\includegraphics[width=1.2\textwidth]{./Images/RD_error_fix_DEIM_1e-5_order.tex}
	\end{subfigure}
	\caption{\emph{Test case i - different FE order.} $L^2(\Omega_2)$ mean slave solution error ($y$-axis) over $N_{test}$ trial fixing the DEIM tolerance and varying the master POD tolerance ($x$-axis) and the slave POD tolerance (legend).}
	\label{fig:RD_fixed_DEIM_error_order}
\end{figure}

Fig. $\ref{Fig:RD_error_vs_time_order}$ and Table $\ref{Tab:RD_mesh_order}$ show  similar results of the computational expensiveness of the operated reductions. We point out that the decrease of the master ROM speed up of about 6 times with respect to the previous test case affects more the coupled problem CPU time reduction; anyway, we can ensure a good overall performance given the absence of manual interface extraction.

\begin{figure}[h!]
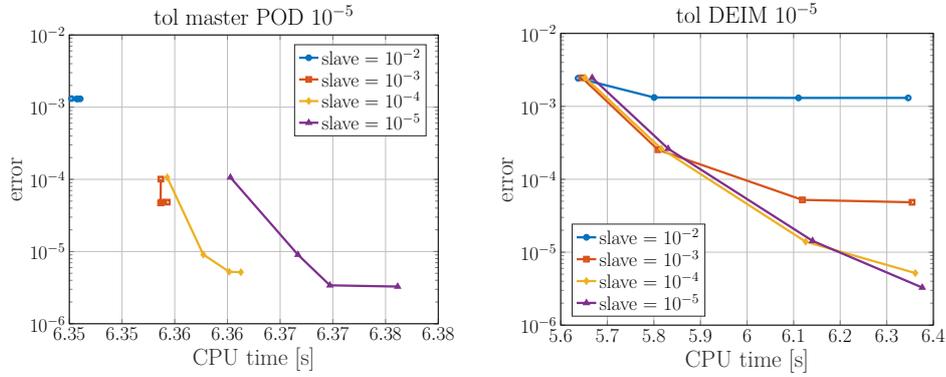

	\centering
	\begin{subfigure}{.3\textwidth}
		\includegraphics[width=1.2\textwidth]{./Images/RD_error_vs_time_fixed_master_order.tex}
	\end{subfigure}\hspace{40pt}
	\begin{subfigure}{.3\textwidth}
		\includegraphics[width=1.2\textwidth]{./Images/RD_error_vs_time_fixed_DEIM_order.tex}
	\end{subfigure}
	\caption{\emph{Test case i - different FE order.} $L^2(\Omega_2)$ mean slave solution error vs the CPU time fixing the master POD tolerance (left) and the interface DEIM tolerance (right) to $10^{-5}$ and varying the tolerances used for the reduction of the other quantities.}
	\label{Fig:RD_error_vs_time_order}
\end{figure}

\begin{table}[h!]
	\centering 
	\begin{tabular}{c| cc| cccc}
		\toprule
		& \multicolumn{2}{c| }{High fidelity model} &\multicolumn{4}{c}{Reduced order model} \\ %[0.5ex]
		&\#FE &FE solution  &\#RB &Offline&Online&Speed up
		\\ &DoFs &time &&time &time \\
		\hline &&&&&&\\ [-2ex]
		Master model &202$k$ &$\sim$ 19.49$s$ &8 &$\sim$ 1950$s$ &$\sim$ 6.30$s$ &\cellcolor{red!25}3.1$x$\\
		Slave model &26$k$ &$\sim$ 2.54$s$ &6 &$\sim$ 566$s$ &$\sim$ 0.08$s$ &\cellcolor{red!15}33.4$x$\\
		Interface data & &\cellcolor{red!25}$\sim$ 6$m$ &7 &$\sim$ 646$s$  & \cellcolor{green!25}0.00$s$&\\ 
		Coupled model & &$\sim$ 382.03$s$ & &$\sim$ 3162$s$ &$\sim$ 6.38$s$ &\cellcolor{green!25}$ 59.9x$\\
		\bottomrule
	\end{tabular}
	\vspace{0.15cm}
	\caption{\emph{Test case i - different FE order.} High fidelity and reduced order model dimensions and CPU times.  We highlight the performances of the ROM model with respect to the interface Dirichlet data treatment and the speed up using colors from red (worst) to green (best).}
	\label{Tab:RD_mesh_order}
\end{table}	

\subsection{Test case $ii$: unsteady model - steady model}
\label{Subsec:test_case_ii}
We  now apply the proposed ROM to a time dependent model coupled with a time independent model. In particular we choose the heat equation as master model and a simple Laplacian as slave model. Hence, 
\begin{equation}
\begin{cases}
\frac{\partial u}{\partial t} - \nabla \cdot (\alpha \nabla u) = f &\text{in } \Omega_1\times \{0, T\}\\
u = 0 &\text{on }\partial \Omega_{1,D}\backslash \Gamma \times \{0, T\}\\
\frac{\partial u}{\partial n_1} = 0 &\text{on }\Omega_{1,N}\times \{0, T\}\\
\frac{\partial u}{\partial n_1} = 0 &\text{on }\Gamma \times\{0,T\}\\
u(0) = 0 &\text{on }\Omega_{1,N},
\end{cases}
\end{equation} 
and 
\begin{equation}
\begin{cases}
- \Delta v = 0 &\text{in }\Omega_2\\
\frac{\partial v}{\partial n_2} = 0 &\text{on }\partial \Omega_{2,N},
\end{cases}
\end{equation}
with the usual coupling conditions at the interface
$$ u = v \quad \text{on } \Gamma.$$
We define $f(x,y,z,t) = 1-\sin(\pi y)cos(\frac{\pi}{2}x)$, the time interval $[0,1]$ and we choose to vary $\alpha$ in $[0,5]$ according to LHS distribution. We remark that the time variable is considered as the second parameter of the reduce model.

The FOMs are solved in two three-dimensional cubes with a common face $\Gamma$. We choose $\partial\Omega_{1,D}\backslash \Gamma$ as the face of $\Omega_1$ opposite to $\Gamma$, $\partial\Omega_{1,N}$ as the faces of $\Omega_1$ perpendicular to $\Gamma$, and $\partial\Omega_{2,N} = \partial\Omega_2 \backslash \Gamma$.
We used different discretizations on $\Omega_1$ and $\Omega_2$ and the same FEM-$\mathbb{Q}_1$. In particular, we fixed $h_1 = 0.0541266~m$ and $h_2 = 0.108253~m$, meaning that $N_1 = 35937$ and $N_2 = 4913$. See Fig. $\ref{Fig:HD_domains.}$ for a graphical representation of the domains and Fig. $\ref{fig:snapshots_probl_2}$ for some FOM numerical solutions at the interface.

\begin{figure}[h!]
	\centering
	\begin{subfigure}{.48\textwidth}
		\includegraphics[width=1\textwidth]{./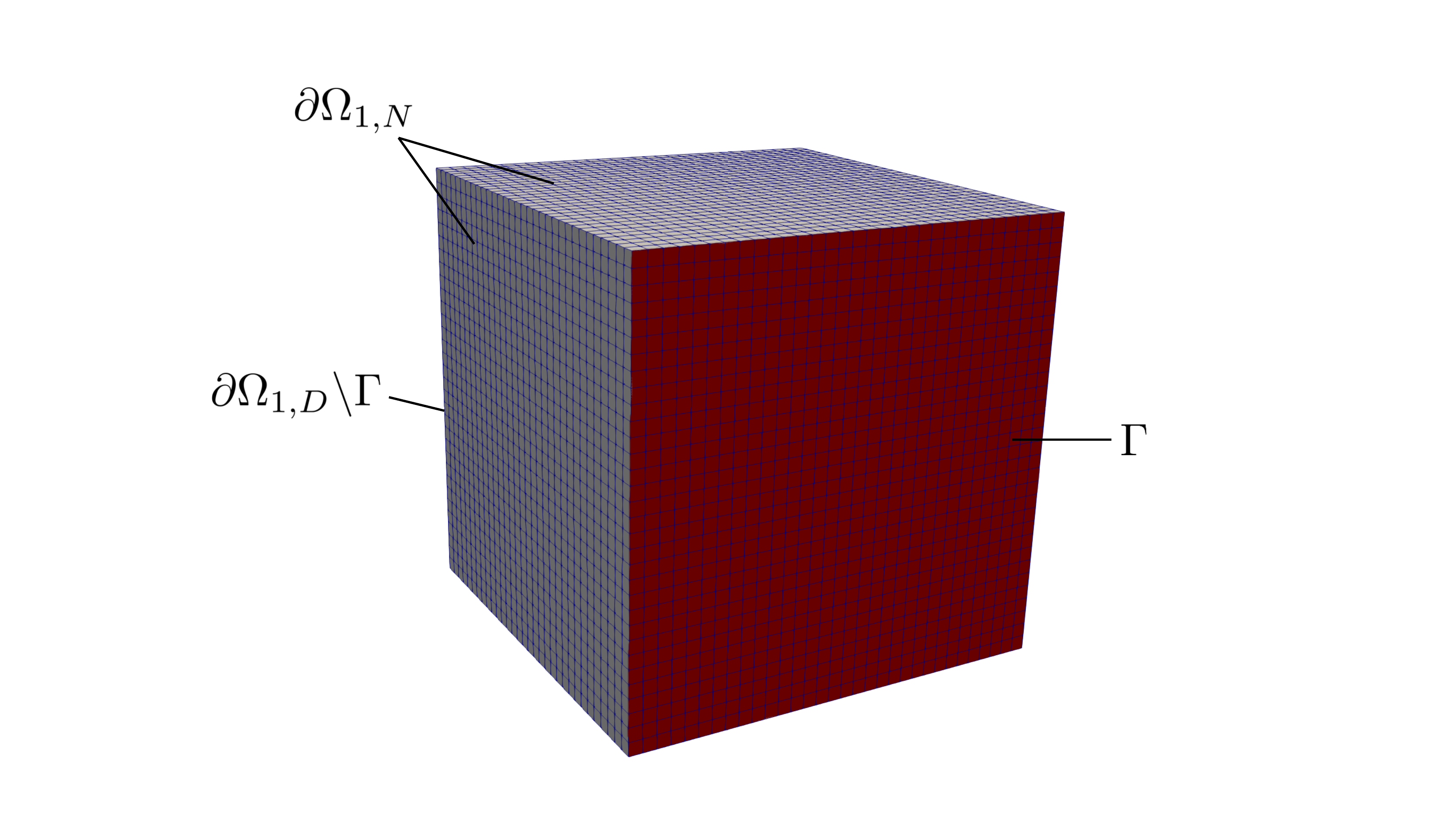}
	\end{subfigure}
	\begin{subfigure}{.48\textwidth}
		\includegraphics[width=1\textwidth]{./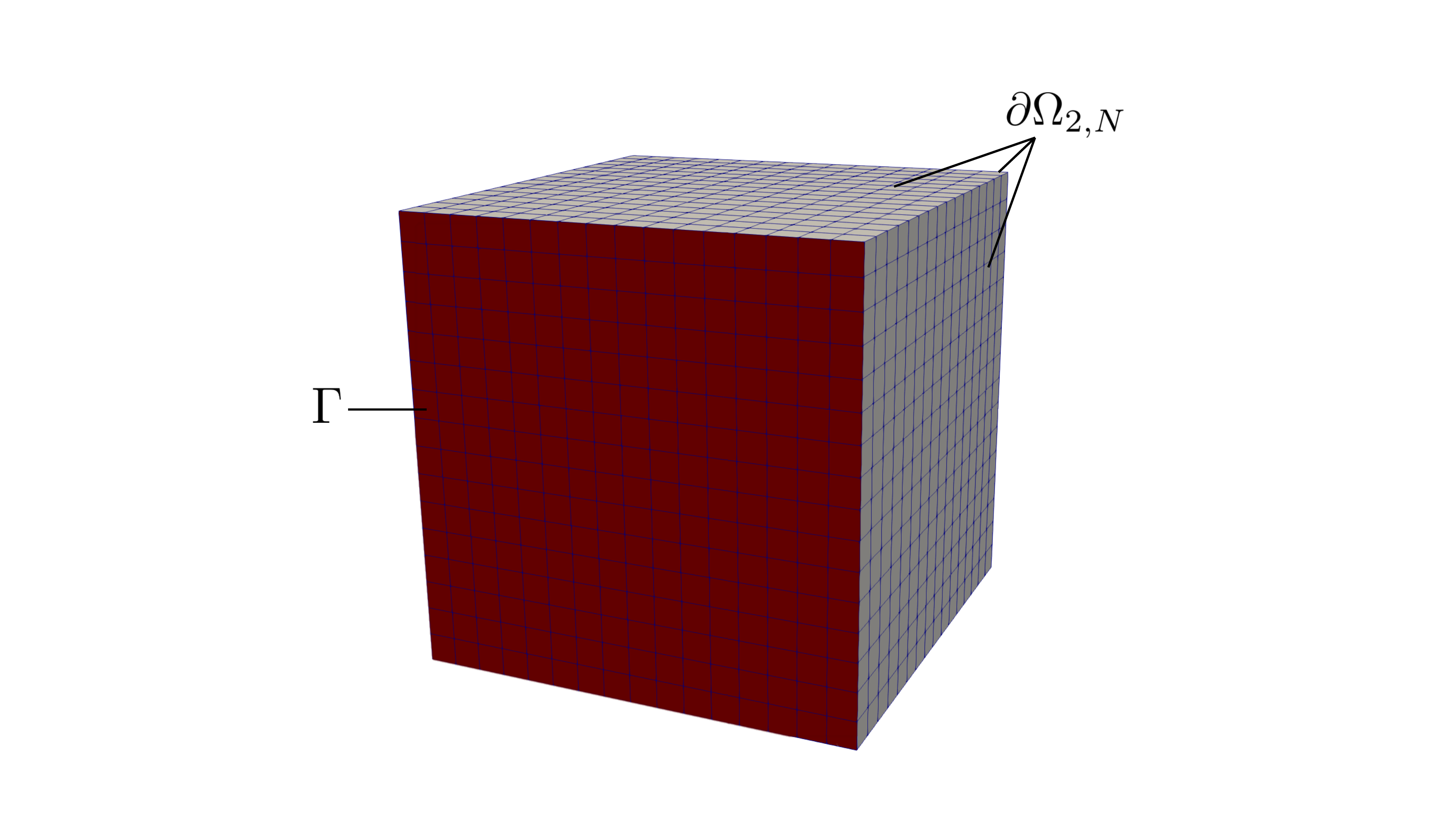}
	\end{subfigure}
	\caption{\emph{Test case ii.} Master (left) and slave (right) domains, two cubes discretized with different meshes, \emph{i.e.} $h_1 = 0.0541266~m$ and $h_2 = 0.108253~m$. In red the interface boundary $\Gamma$.}
	\label{Fig:HD_domains.}
\end{figure}

\begin{figure}[h!]
	\centering
	\begin{subfigure}{.2\textwidth}
		\centering
		\hspace{100pt} $t = 10s$\\
		\vspace{7pt}
		\includegraphics[width=1.2\textwidth]{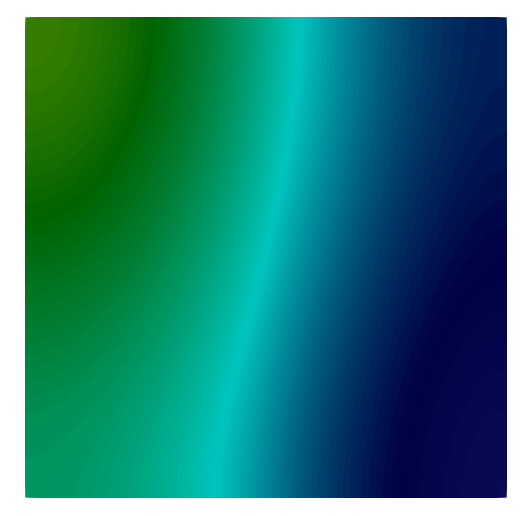}
	\end{subfigure}
	\hspace{15pt}
	\begin{subfigure}{.2\textwidth}
		\centering
		\hspace{70pt} $t = 30s$\\
		\vspace{7pt}
		\includegraphics[width=1.2\textwidth]{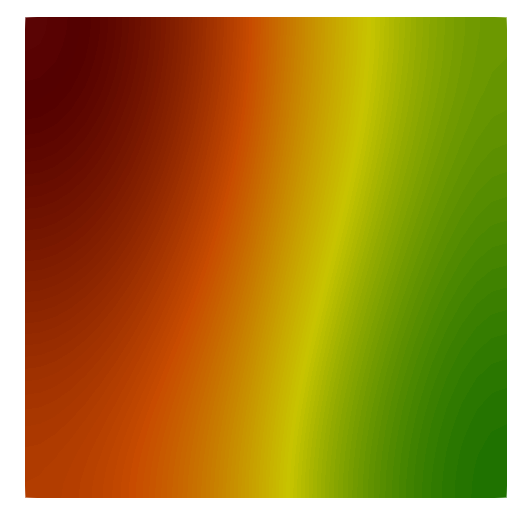}
	\end{subfigure}
	\hspace{15pt}
	\begin{subfigure}{.2\textwidth}
		\centering
		\hspace{70pt} $t = 60s$\\
		\vspace{7pt}
		\includegraphics[width=1.2\textwidth]{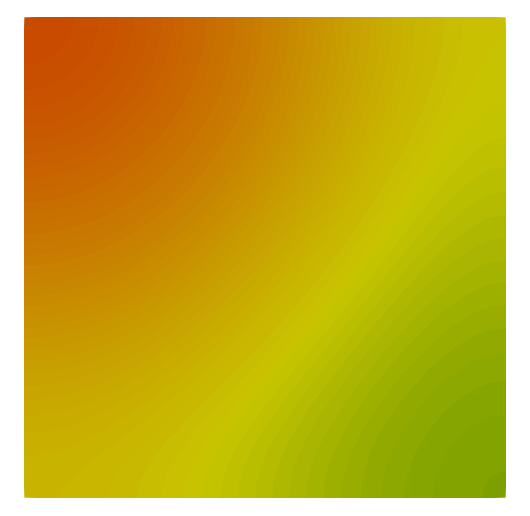}
	\end{subfigure} 
	\hspace{7pt}
	\begin{subfigure}{.2\textwidth}
		\centering
		\hspace{30pt}$ $ \\
		\vspace{18pt}
		\includegraphics[width=0.7\textwidth]{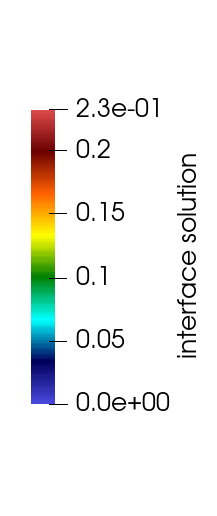}
	\end{subfigure}\\
	\begin{subfigure}{.2\textwidth}
		\centering
		\includegraphics[width=1.2\textwidth]{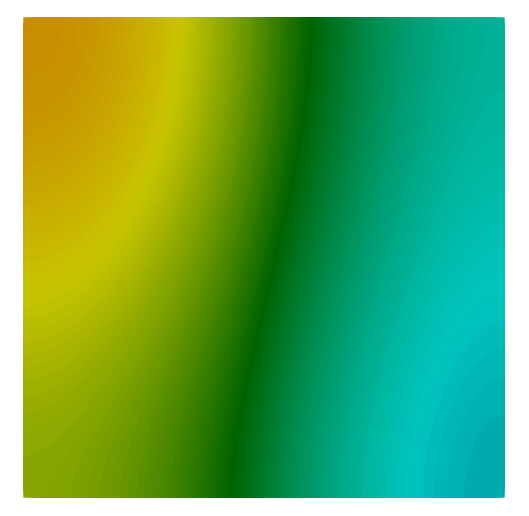}
	\end{subfigure}
	\hspace{15pt}
	\begin{subfigure}{.2\textwidth}
		\centering
		\includegraphics[width=1.2\textwidth]{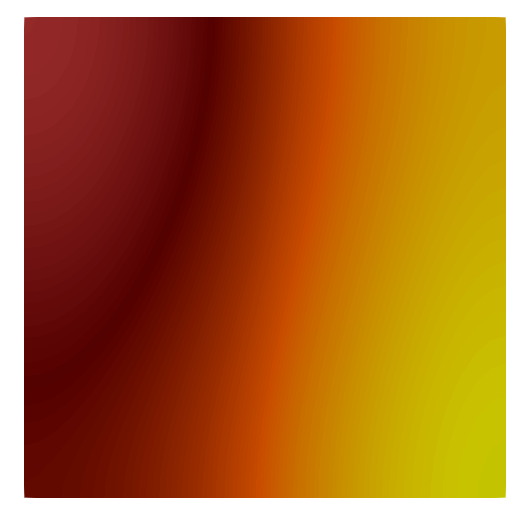}
	\end{subfigure}
	\hspace{15pt}
	\begin{subfigure}{.2\textwidth}
		\centering
		\includegraphics[width=1.2\textwidth]{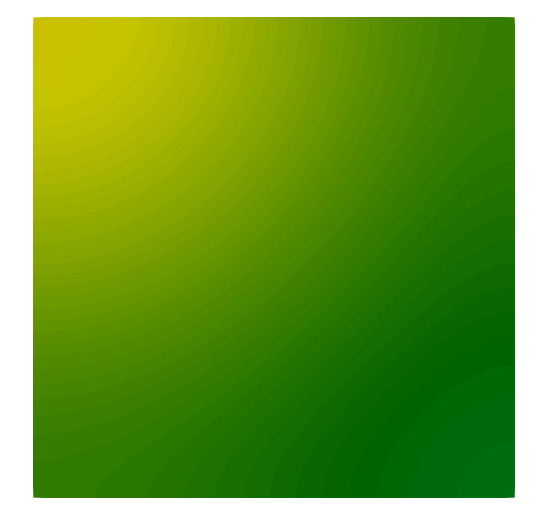}
	\end{subfigure} 
	\hspace{7pt}
	\begin{subfigure}{.2\textwidth}
		\centering
		\includegraphics[width=0.7\textwidth]{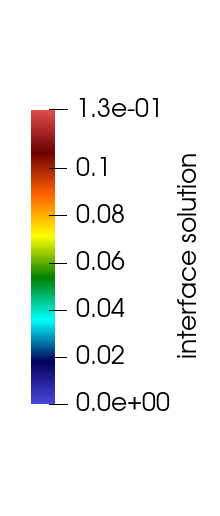}
	\end{subfigure}
	\caption{\emph{Test case ii.} Interface solutions for three different time instant (columns) and two different instances of $\alpha$ (rows).}
	\label{fig:snapshots_probl_2}
\end{figure}

We solve the master model using a BDF scheme of order 1 with $\Delta t = 10^{-2}$. Then, we evaluate the singular values decay of the master and slave solutions and interface data varying $N_{train} = \{10, 20, 40, 60,  80\}$. The corresponding snapshots matrices are formed by $N_s = N_t N_{train}$ full-order vectors, in which $N_t = 100$ is the number of time-steps used to solve the heat equation. The eigenvalues decay reported in Fig. $\ref{Fig:singular_value_HD_mesh}$ show that $N_{train} = 40$ is enough to get a sufficiently rich reduction. As before, the eigenvalue decays of slave solution and interface data are quite similar.
\begin{figure}[h!]
	\centering
	\begin{subfigure}{.32\textwidth}
		\centering
		\includegraphics[width=1.\textwidth]{./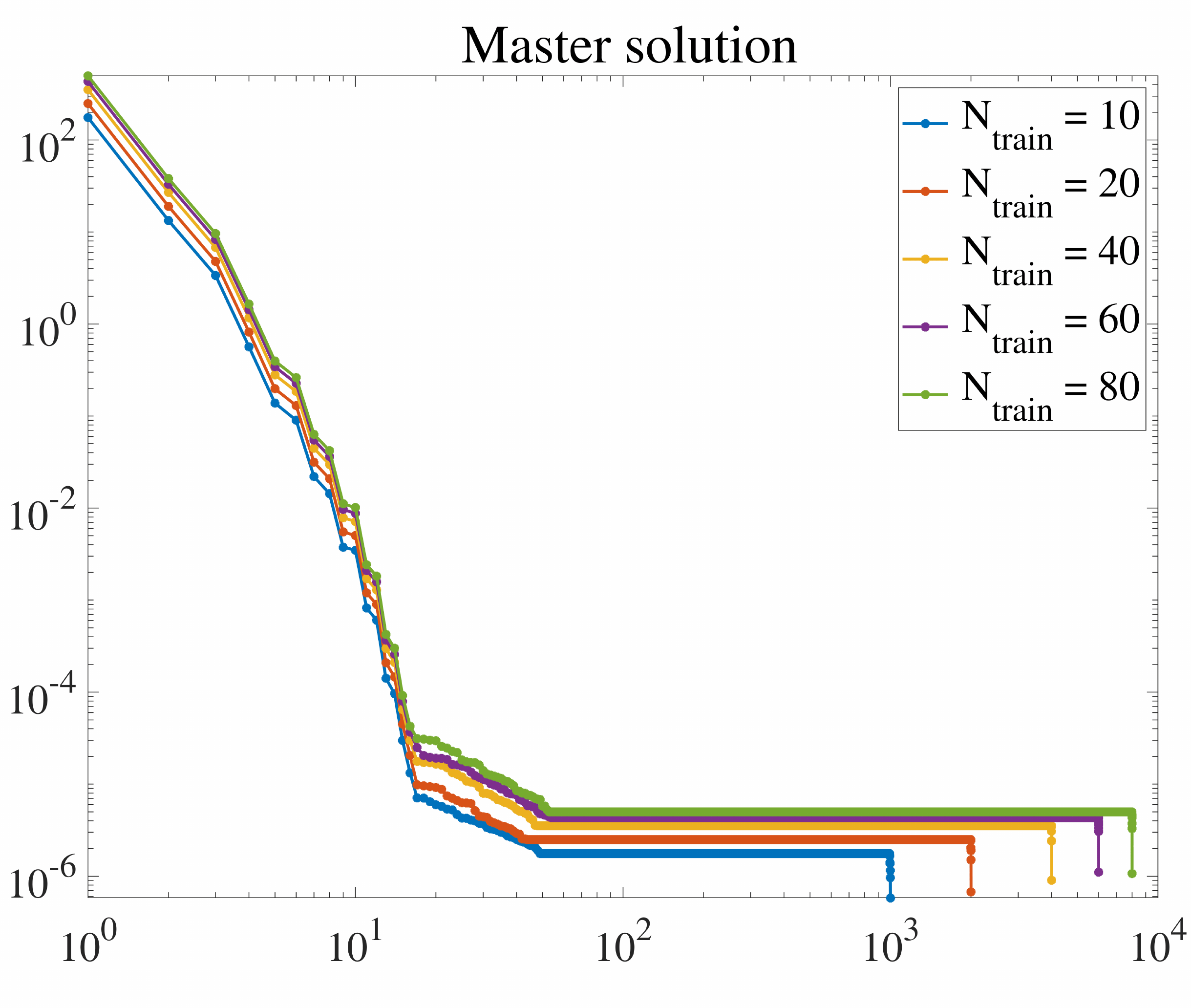}
	\end{subfigure}\hfill
	\begin{subfigure}{.32\textwidth}
		\centering
		\includegraphics[width=1\textwidth]{./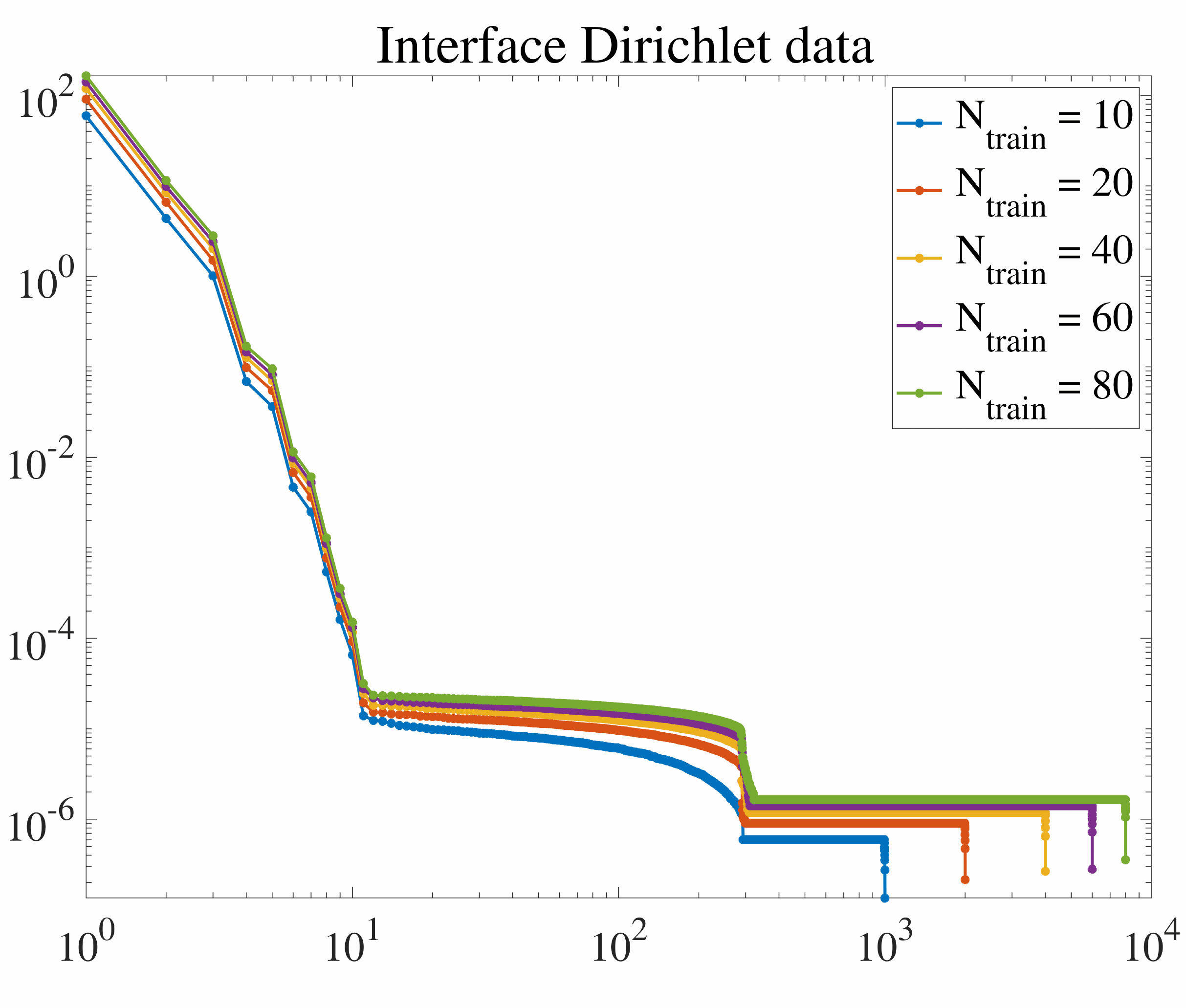}
	\end{subfigure}\hfill
	\begin{subfigure}{.33\textwidth}
		\centering
		\includegraphics[width=1\textwidth]{./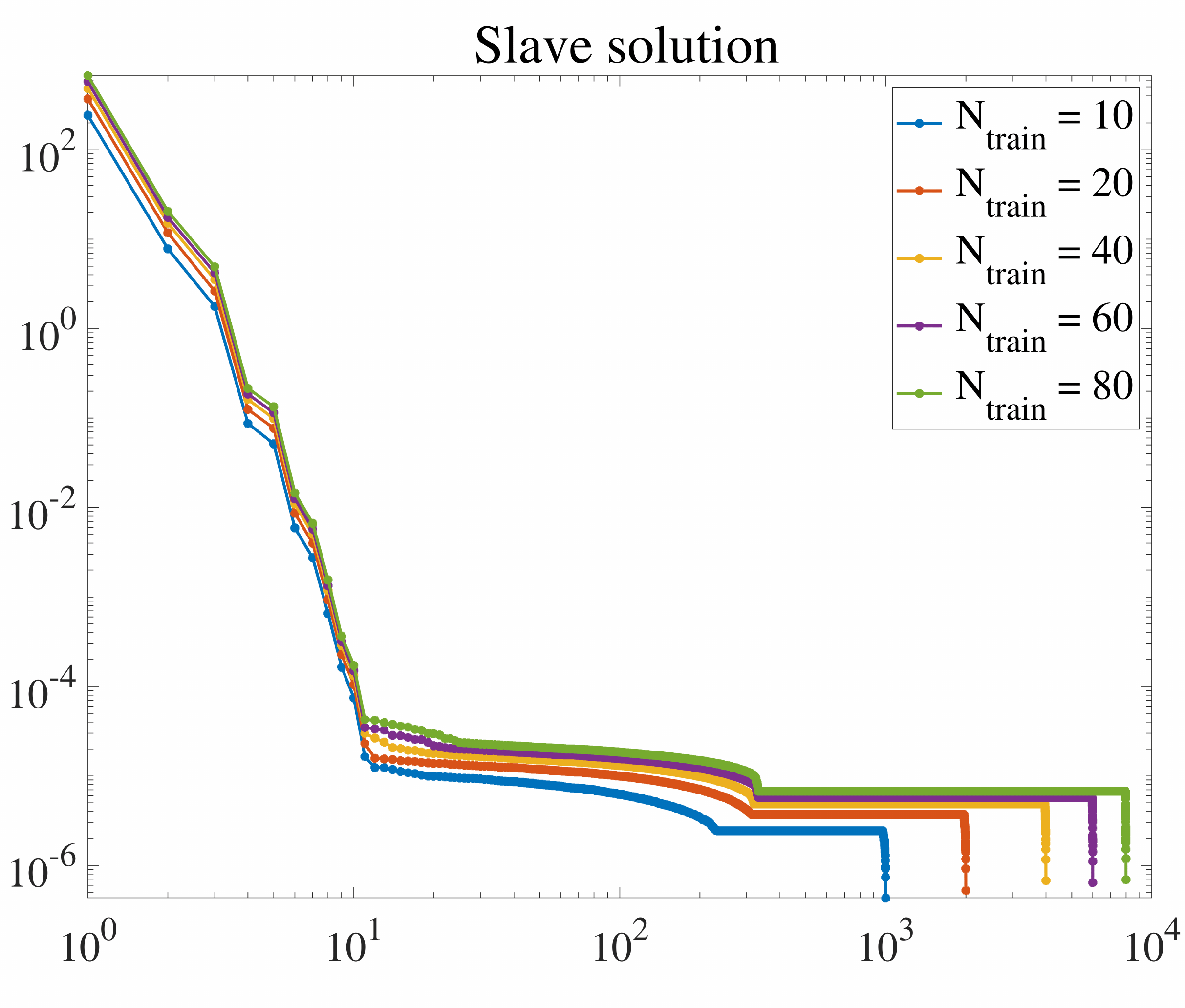}
	\end{subfigure}
	\caption{\emph{Test case ii.} Singular values decay of the master solution (left), interface Dirichlet data (center) and slave solution (right). }
	\label{Fig:singular_value_HD_mesh}
\end{figure}

We select $N_{test} = 5$ values of $\alpha$ to test our procedure and we estimate the reduced error on the slave domain as in test case \emph{i}, considering the mean of the 2-norm error over the $N_tN_{test}$ trial of the ROM and FOM slave solutions. We report the errors in Fig. $\ref{Fig:singular_value_HD_mesh}$ and $\ref{fig:HD_fixed_DEIM_error_order}$. We found that, once again, the reduced slave solution is more dependent from the reduction of the master solution than from that of the interface data. In any case, a good approximation of both quantities is required to obtain a good approximation of the slave solution, as for the time-independent test case \emph{i}.

\begin{figure}[h!]
	\centering
	\begin{subfigure}{.3\textwidth}
		\centering 
		\includegraphics[width=1.2\textwidth]{./Images/HD_error_fix_master_1e-2.tex}
	\end{subfigure}\hspace{40pt}
	\begin{subfigure}{.3\textwidth}
		\centering 
		\includegraphics[width=1.2\textwidth]{./Images/HD_error_fix_master_1e-3.tex}
	\end{subfigure}\\
	\bigskip
	\begin{subfigure}{.3\textwidth}
		\centering 
		\includegraphics[width=1.2\textwidth]{./Images/HD_error_fix_master_1e-4.tex}
	\end{subfigure}\hspace{40pt}
	\begin{subfigure}{.3\textwidth}
		\centering 
		\includegraphics[width=1.2\textwidth]{./Images/HD_error_fix_master_1e-5.tex}
		\label{Fig:HD_fixed_master_5}
	\end{subfigure}
	\caption{\emph{Test case ii.} $L^2(\Omega_2)$ mean slave solution error ($y$-axis) over $N_{test}$ trial fixing the master POD tolerance and varying the DEIM tolerance ($x$-axis) and the slave POD tolerance (legend).}
	\label{fig:HD_fixed_master_error}
\end{figure}
\begin{figure}[h!]
	\centering
	\begin{subfigure}{.3\textwidth}
		\centering
		\includegraphics[width=1.2\textwidth]{./Images/HD_error_fix_DEIM_1e-2.tex}
	\end{subfigure}\hspace{40pt}
	\begin{subfigure}{.3\textwidth}
		\centering
		\includegraphics[width=1.2\textwidth]{./Images/HD_error_fix_DEIM_1e-3.tex}
	\end{subfigure}\\
	\bigskip
	\begin{subfigure}{.3\textwidth}
		\centering
		\includegraphics[width=1.2\textwidth]{./Images/HD_error_fix_DEIM_1e-4.tex}
	\end{subfigure}\hspace{40pt}
	\begin{subfigure}{.3\textwidth}
		\centering
		\includegraphics[width=1.2\textwidth]{./Images/HD_error_fix_DEIM_1e-5.tex}
	\end{subfigure}
	\caption{\emph{Test case ii.} $L^2(\Omega_2)$ mean slave solution error ($y$-axis) over $N_{test}$ trial fixing the DEIM tolerance and varying the master POD tolerance ($x$-axis) and the slave POD tolerance (legend).}
	\label{fig:HD_fixed_DEIM_error_order}
\end{figure}

Fig. $\ref{Fig:HD_error_vs_time}$ and Table $\ref{Tab:HD_mesh_order}$ outline the performances of the ROM model related to a fixed prescribed POD tolerance of $10^{-5}$. The reported time values refer to a complete simulation in time with 100 time steps. Compared to test case \emph{i}, the overall performances of the ROM worsens since some expensive tasks are repeated in the ROM at each time step. In particular, according to the applied BDF formula, the right hand side of the master model depends of the FOM solution that, hence, must be reconstructed at each time step. This task can be avoided considering, for example, a hyper reduction technique for the right hand side. In any case, an overall speed up of about 2 and 3 times can be obtained for each submodel and a total speed up of 7 times  can be achieved for the coupled problem, since the manual interface extraction and interpolation are not required in the online phase -- two tasks that would require almost the 75\% of the FOM CPU time. As before, we point out that considering an accuracy in the reduction of $10^{-5}$ for the three parts of the model will increase of only about  3\% the total computational costs of the simulation with respect to a reduction with POD tolerance fixed to $10^{-5}$ for the master model and to $10^{-2}$ for the slave and the interface subproblems.
\begin{figure}[h!]
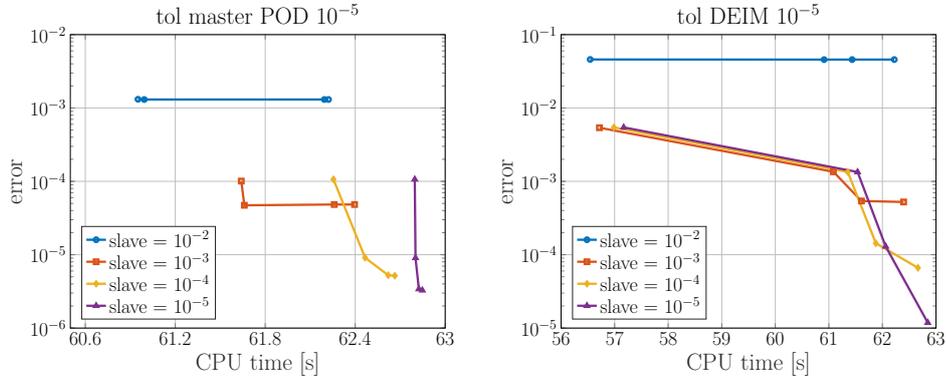

	\centering
	\begin{subfigure}{.3\textwidth}
		\includegraphics[width=1.2\textwidth]{./Images/HD_error_vs_time_fixed_master.tex}
	\end{subfigure}\hspace{40pt}
	\begin{subfigure}{.3\textwidth}
		\includegraphics[width=1.2\textwidth]{./Images/HD_error_vs_time_fixed_DEIM.tex}
	\end{subfigure}
	\caption{\emph{Test case ii.} $L^2(\Omega_2)$ mean slave solution error vs the CPU time fixing the master POD tolerance (left) and the interface DEIM tolerance (right) to $10^{-5}$ and varying the tolerances used for the reduction of the other quantities.}
	\label{Fig:HD_error_vs_time}
\end{figure}

\begin{table}[h!]
	\centering 
	\begin{tabular}{c| cc| cccc}
		\toprule
		& \multicolumn{2}{c| }{High fidelity model} &\multicolumn{4}{c}{Reduced order model} \\ 
		&\#FE &FE solution  &\#RB &Offline &Online &Speed up\\ 
		&DoFs &time &&time &time \\
		\hline &&&&&&\\ [-2ex]
		Master model &36$k$ &$\sim$ $103.55s$ &9 &$\sim$ 4780$s$ &$\sim$ $55.99s$ &\cellcolor{red!25}1.9$x$\\
		Slave model &5$k$ &$\sim$ $19.66s$ &7 &$\sim$ 847$s$ &$\sim$ $6.86s$ &\cellcolor{red!15}2.9$x$\\
		Interface data & &\cellcolor{red!25}$\sim$ $6m$ &15 &$\sim$ 862$s$  & \cellcolor{green!25}$0.00s$&\\ 
		Coupled model & &$\sim$$483.21s$ & &$\sim$ 6489$s$ &$\sim$ $62.84s$ &\cellcolor{green!25}7.7$x$\\
		\bottomrule
	\end{tabular}
	\caption{\emph{Test case ii.} High fidelity and reduced order model dimensions and CPU times.  We highlight the performances of the ROM model with respect to the interface Dirichlet data treatment and the speed up using colors from red (worst) to green (best).}
	\label{Tab:HD_mesh_order}
\end{table}	

\subsection{Test case $iii$: unsteady model - unsteady model}
\label{Sub:fluid_wall_mass_transport}
This last test case addresses a simplified mass transfer problem used to describe the exchange of substances in biology between blood and the arterial wall. In this model the unknowns are the solute concentration convected along the vessel by blood and absorbed by the arterial wall under the blood stress induced on the vascular tissue. 

Introduced in \cite{Karner2021,Quarteroni2002}, this  fluid-wall model is based on an advection-diffusion equation to describe the solute dynamics in the arterial lumen, coupled with a pure diffusive equation accounting for the mass diffusion in the arterial wall. Usual coupling conditions are of Robin type; here, however,  we perform a further  simplification considering an isolated arterial vessel. Hence, we first solve the advection-diffusion equation for the blood transport and, then, the pure diffusive equation in the arterial wall imposing our usual interface Dirichlet conditions.

Specifically, denoting $C_f(\mathbf{x},t)$ and $C_w(\mathbf{x},t)$ the dimensionless concentrations of the solute in the lumen $\Omega_f$ and in the wall $\Omega_w$, respectively, we end up with the following problems:
\begin{equation}
\label{Eq:fluid_equations}
\begin{cases}
\frac{\partial C_f}{\partial t} + \mathbf{v} \cdot \nabla C_f  - \alpha_f \Delta C_f = 0 & \text{in }\Omega_f \times \{0,T\} \\
C_f  = \zeta &\text{on }\Sigma_{f,in} \times \{0,T\} \\
\alpha_f\nabla C_f \cdot \mathbf{n}_f = 0 &\text{on }\Sigma_{f,out} \cup \Gamma \times \{0,T\} \\
C_f(0) = 2.58\cdot 10^{-1} & \text{in }\Omega_f.
\end{cases}
\end{equation}
and 
\begin{equation}
\label{Eq:wall_equations}
\begin{cases}
\frac{\partial C_w}{\partial t} - \alpha_w \Delta C_w  = 0 &\text{in }\Omega_w \times \{0,T\} \\
C_w = C_f &\text{on } \Gamma \times \{0,T\}\\
C_w = 0 &\text{on } \Sigma_{w,0} \times \{0,T\}\\
\alpha_w\nabla C_w\cdot \mathbf{n}_w = 0 &\text{on } \Sigma_{w,in/out} \times \{0,T\}\\
C_w(0) = 2.58\cdot 10^{-1} & \text{in }\Omega_w.
\end{cases}
\end{equation}
where we use subscripts $f$ and $w$ to refer to the fluid or the wall, respectively, in place of the usual indices 1 and 2. Here, $\mathbf{v}$ is the fluid velocity vector and $\alpha_f$ and $\alpha_w$ are the  blood and wall solute diffusivity constants, respectively. 

Then, we define as $\Omega_f$ a small tube of radius $r = 0.3~cm$ and length $1~cm$, while $\Omega_w$ has a thickness equal to 10\% of the vessel lumen (see Fig.~$\ref{Fig:FW_domains}$). We fixed the initial concentration of the solute for both fluid and wall, namely $C_f(0)= C_w(0) = 2.58\cdot 10^{-1}$, and we impose a parabolic profile to the fluid velocity with constant flow rate $Q = 2.0~cm^3 /s$. Moreover, we choose $\alpha_f = 1.2\cdot 10^{-3}~cm^2/s$ and $\alpha_w = 0.9\cdot 10^{-3}~cm^2/s$, so that the P\'{e}clet number of both problems is of order $10^{3}$.

\begin{figure}[h!]
	\centering
	\begin{subfigure}{.48\textwidth}
		\includegraphics[width=1\textwidth]{./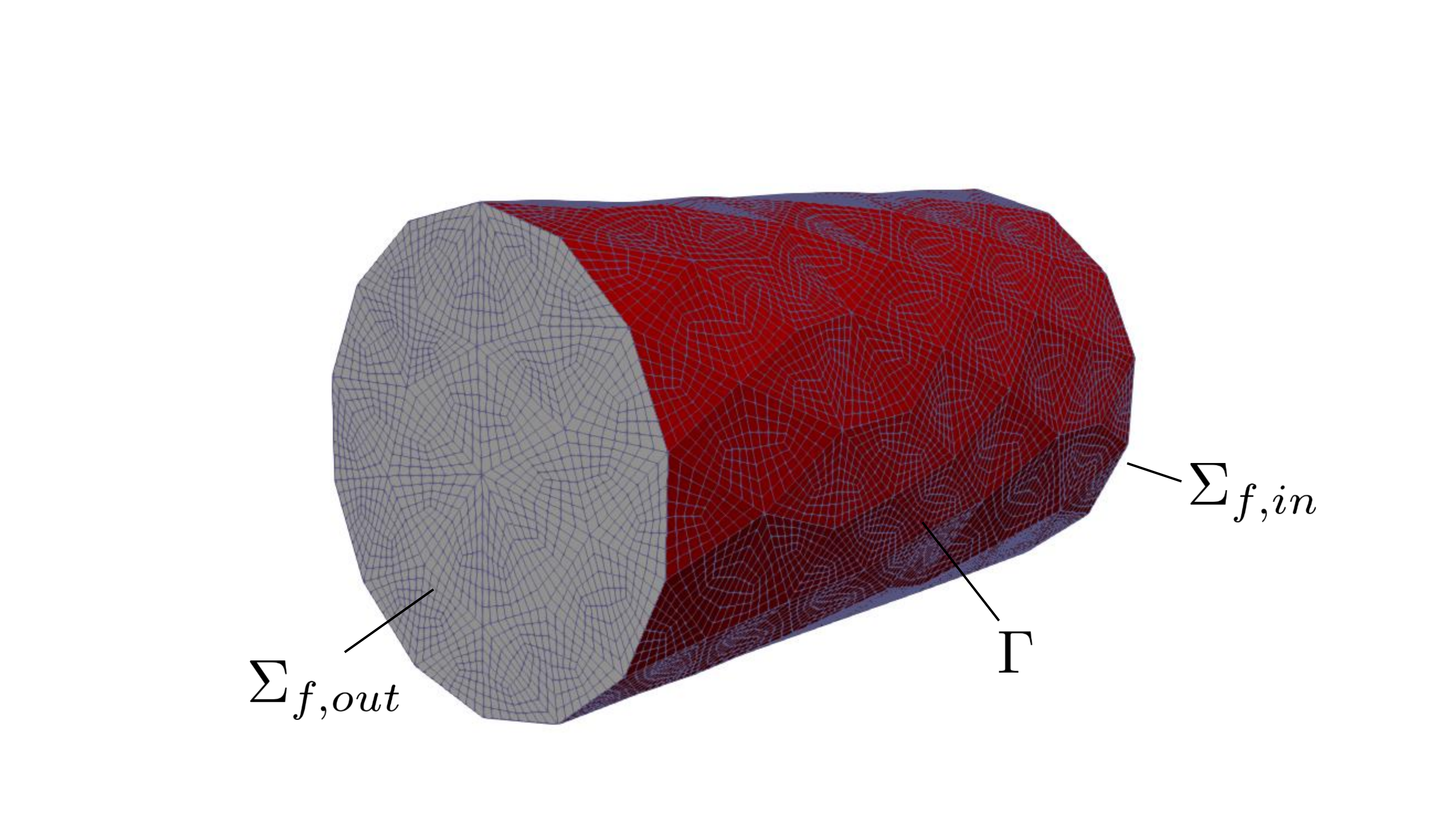}
	\end{subfigure}
	\begin{subfigure}{.48\textwidth}
		\includegraphics[width=1\textwidth]{./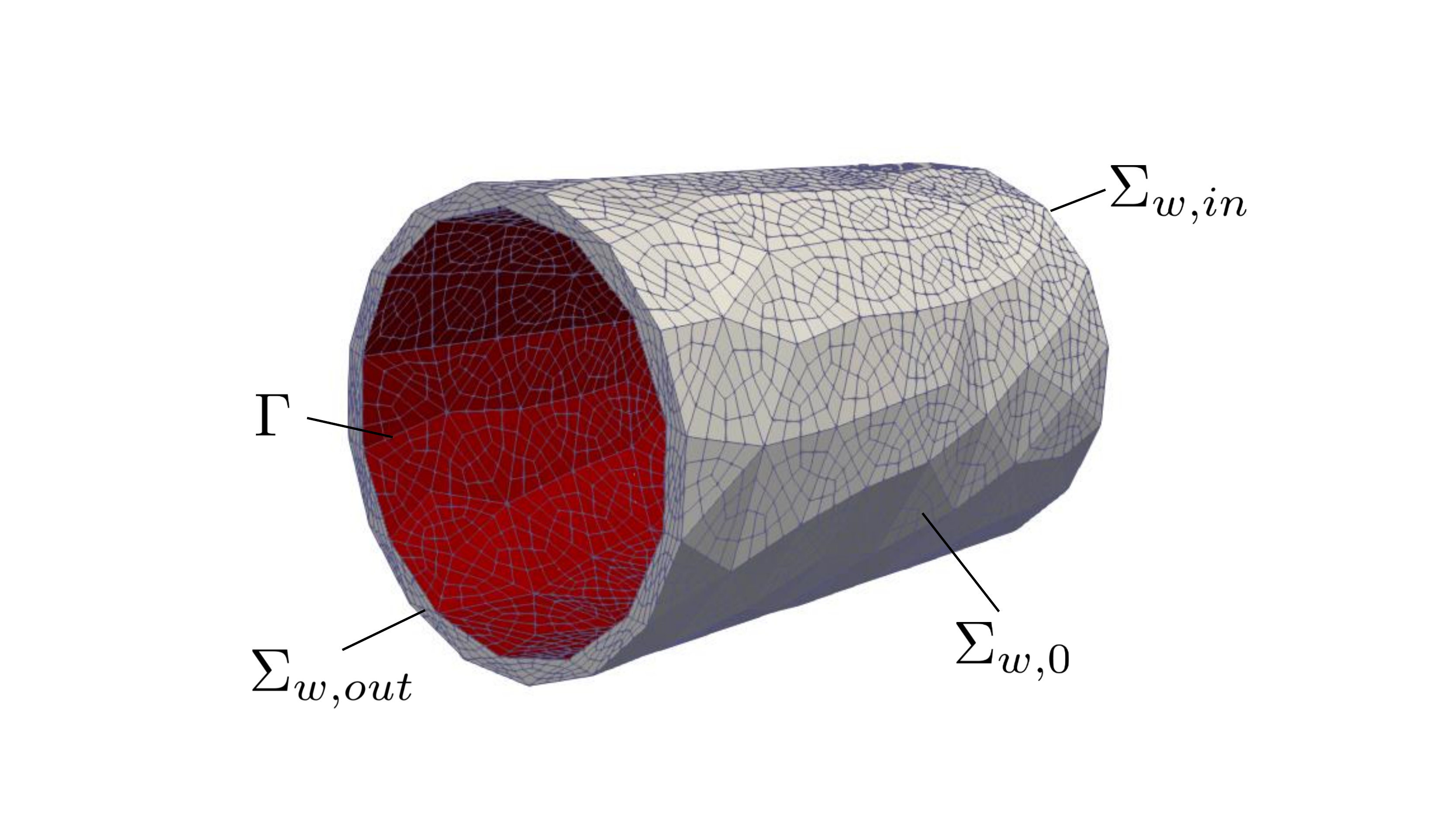}
	\end{subfigure}
	\caption{\emph{Test case iii.} Fluid (left) and wall (right) domains, a small tube of radius $r = 0.3 cm$ with a corresponding wall of thickness equal to the 10\% of the fluid domain lumen. Two different discretizations are considered, \emph{i.e.} $h_f = 0.0863505~cm$ and $h_w = 0.0260143~cm$. In red the interface boundary $\Gamma$.}
	\label{Fig:FW_domains}
\end{figure}

\begin{figure}[h!]
	\centering
	\begin{subfigure}{.2\textwidth}
		\centering
		\hspace{17pt} $t = 0.1s$\\
		\vspace{7pt}
		\includegraphics[width=1.2\textwidth]{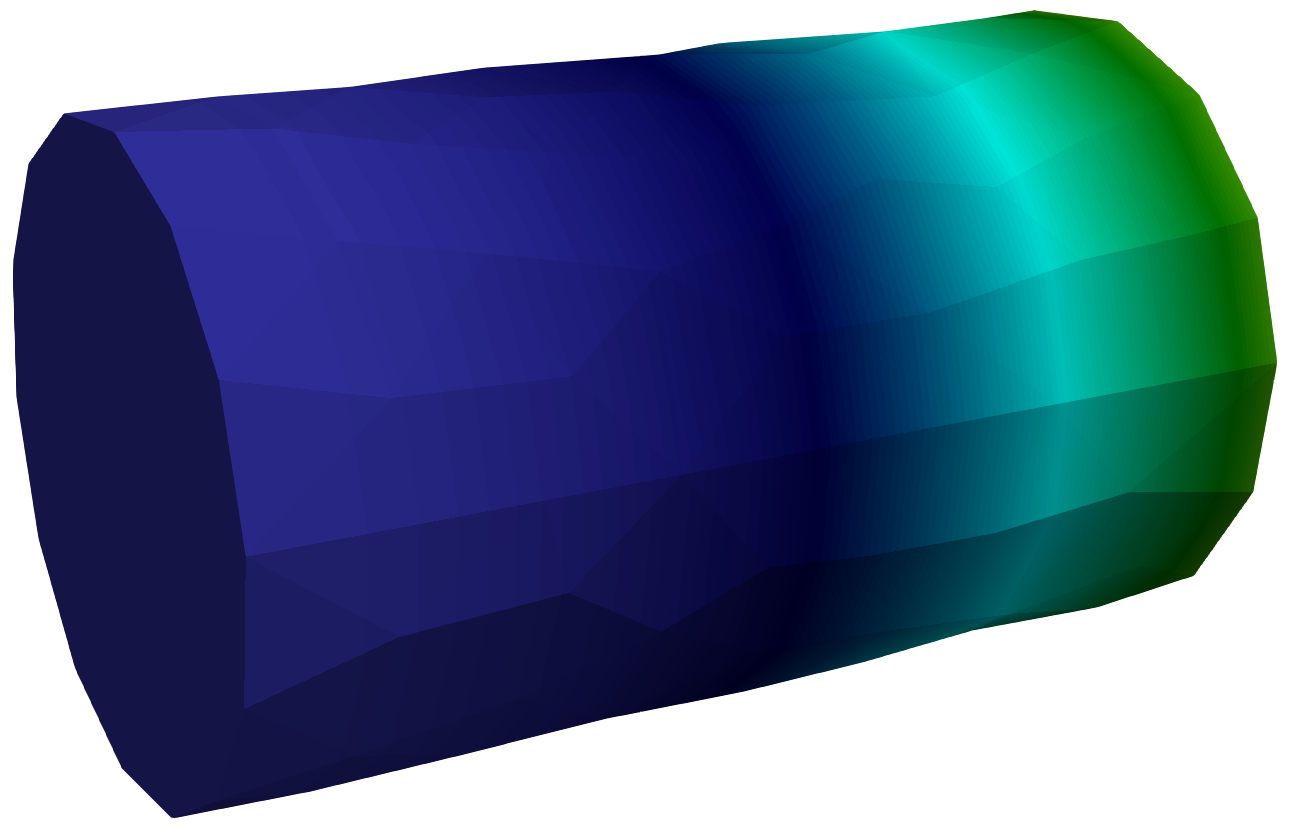}
	\end{subfigure}
	\hspace{15pt}
	\begin{subfigure}{.2\textwidth}
		\centering
		\hspace{17pt} $t = 0.45s$\\
		\vspace{7pt}
		\includegraphics[width=1.2\textwidth]{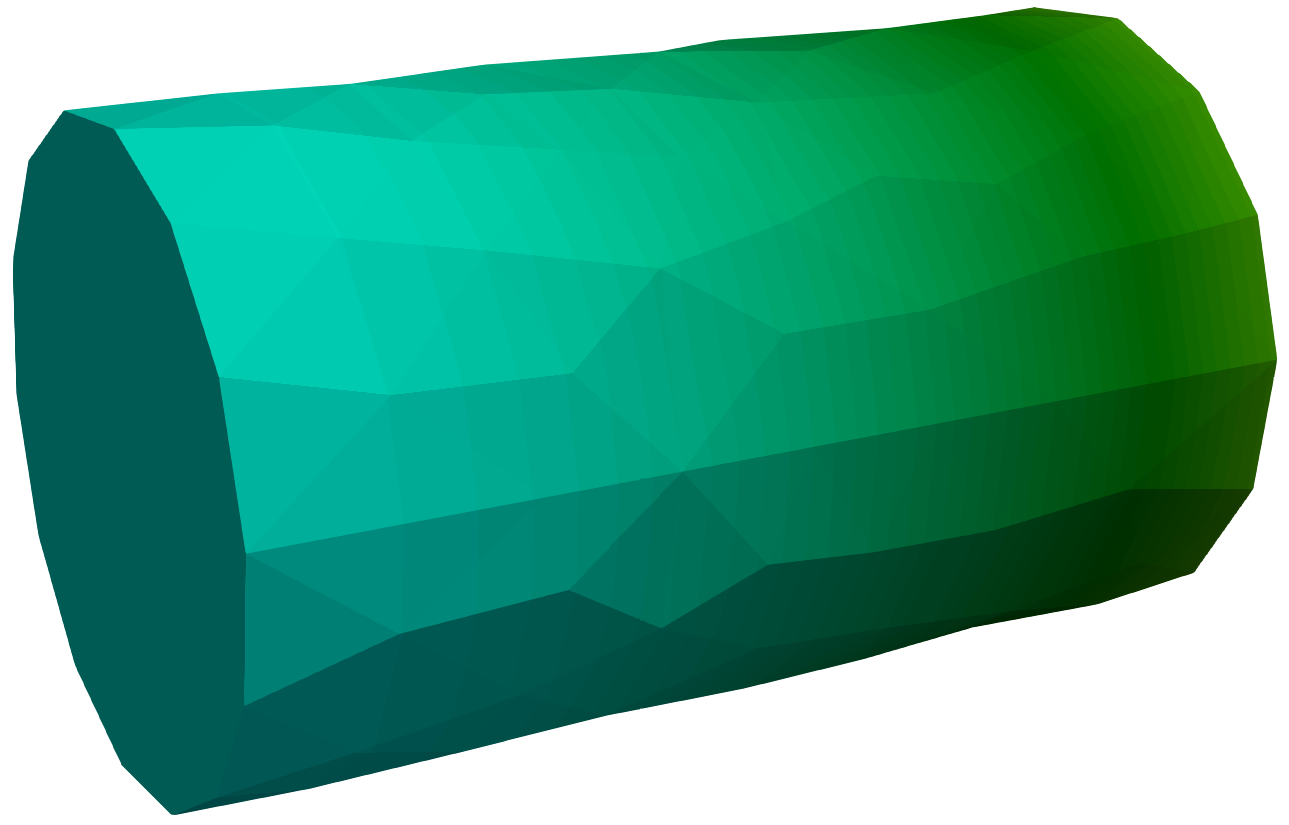}
	\end{subfigure}
	\hspace{15pt}
	\begin{subfigure}{.2\textwidth}
		\centering
		\hspace{17pt} $t = 0.7s$\\
		\vspace{7pt}
		\includegraphics[width=1.2\textwidth]{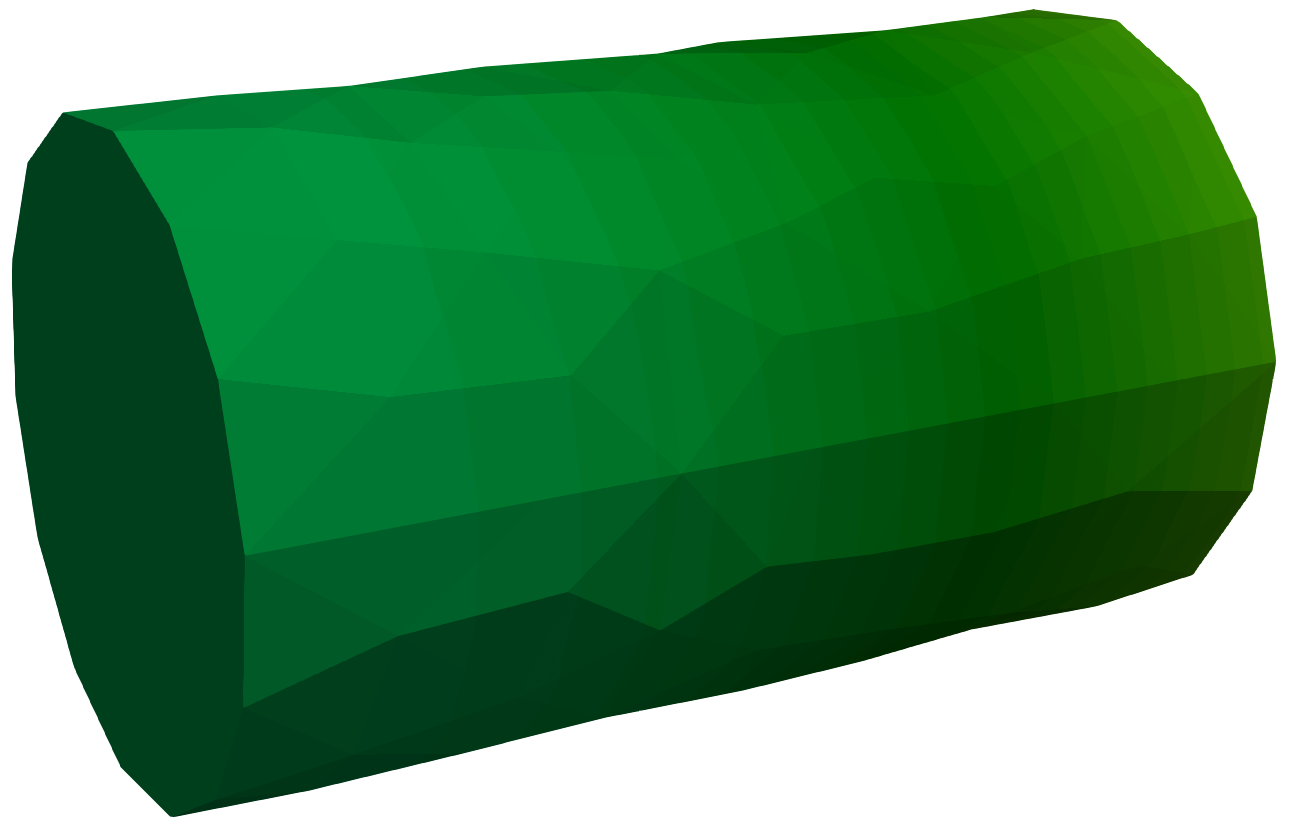}
	\end{subfigure} 
	\hspace{15pt}
	\begin{subfigure}{.2\textwidth}
		\centering
		\vspace{10pt}
		\includegraphics[width=0.6\textwidth]{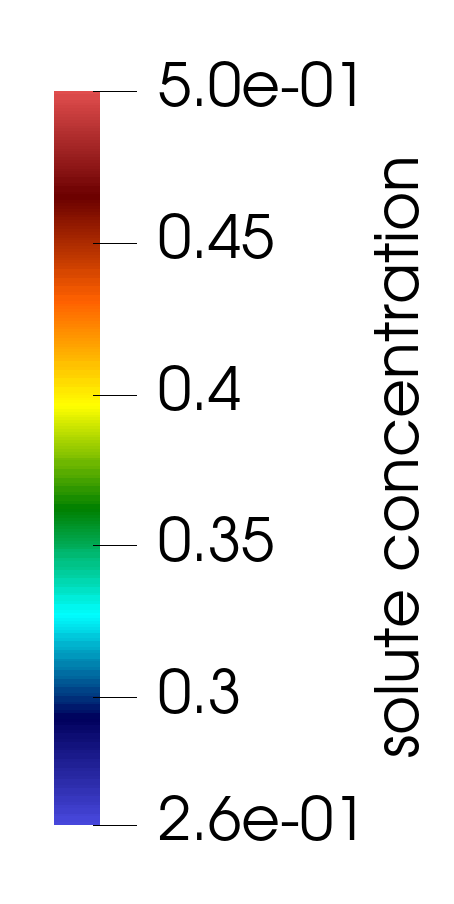}
	\end{subfigure}\\
	\begin{subfigure}{.2\textwidth}
		\centering
		\includegraphics[width=1.2\textwidth]{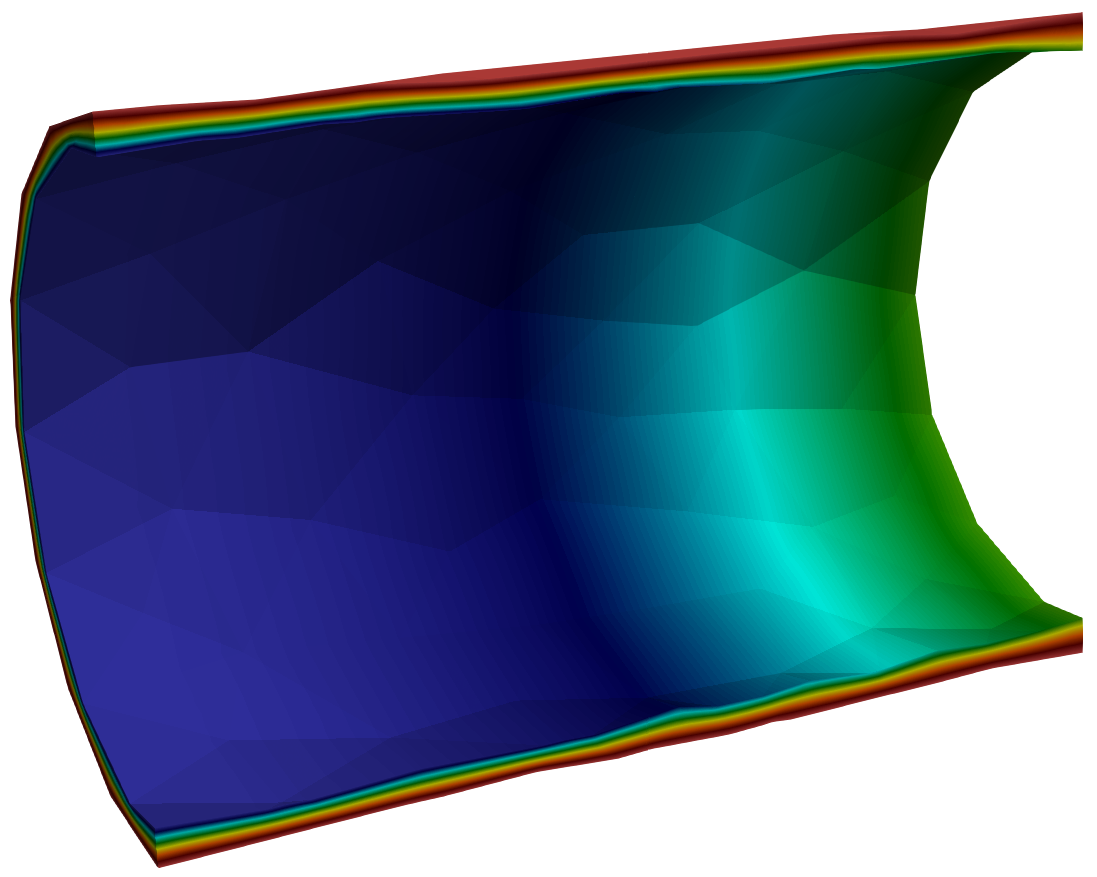}
	\end{subfigure}
	\hspace{15pt}
	\begin{subfigure}{.2\textwidth}
		\centering
		\includegraphics[width=1.2\textwidth]{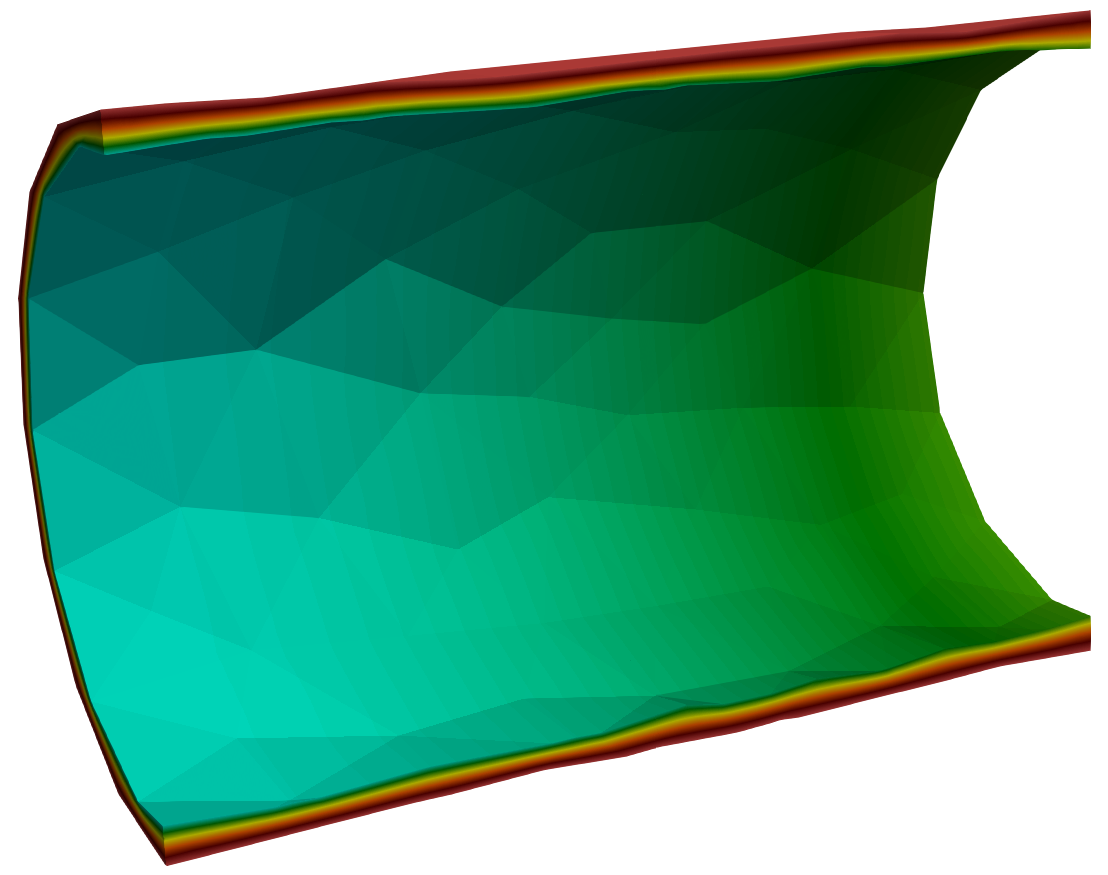}
	\end{subfigure}
	\hspace{15pt}
	\begin{subfigure}{.2\textwidth}
		\centering
		\includegraphics[width=1.2\textwidth]{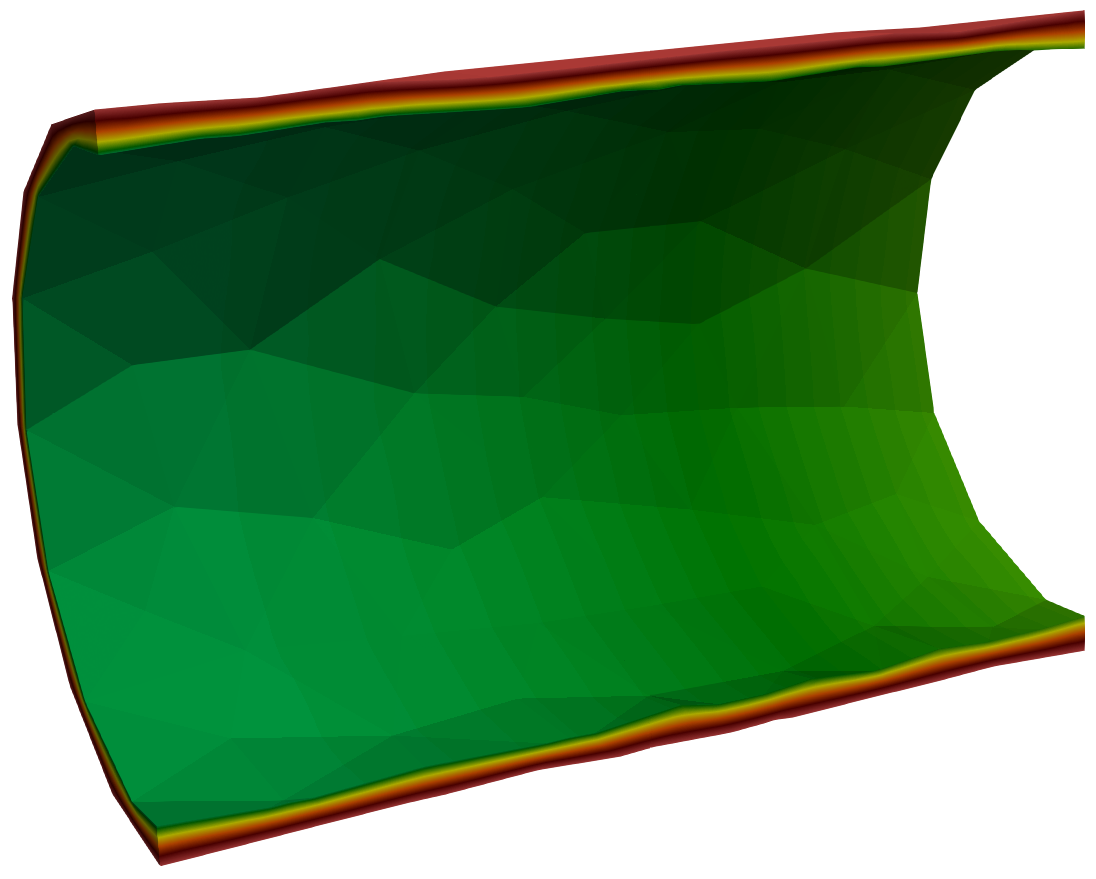}
	\end{subfigure} 
	\hspace{10pt}
	\begin{subfigure}{.2\textwidth}
		\centering
		\includegraphics[width=0.6\textwidth]{Images/bar_fluid_wall.png}
	\end{subfigure}
\begin{subfigure}{.2\textwidth}
	\centering
	\includegraphics[width=1.2\textwidth]{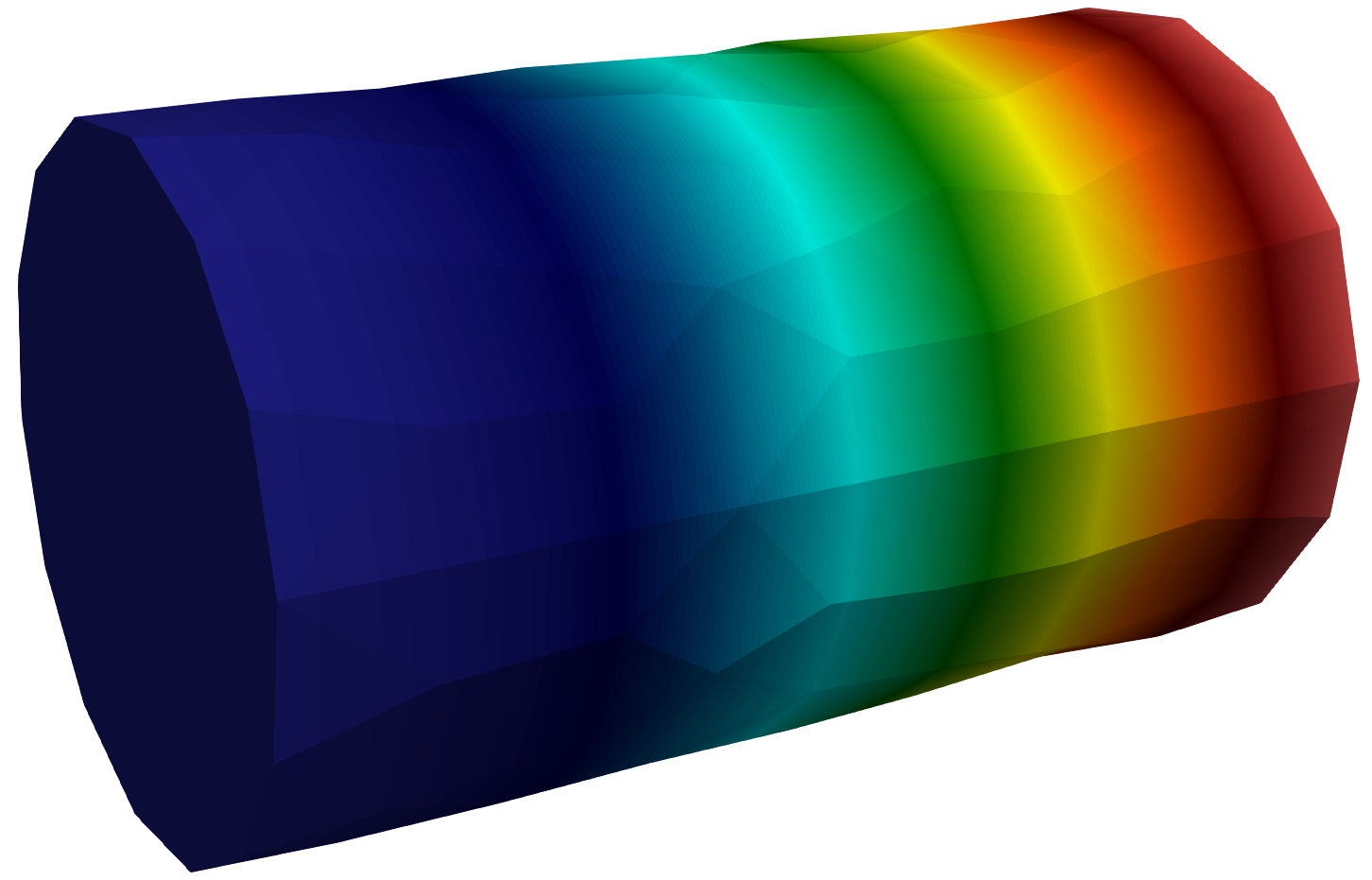}
\end{subfigure}
\hspace{15pt}
\begin{subfigure}{.2\textwidth}
	\centering
	\includegraphics[width=1.2\textwidth]{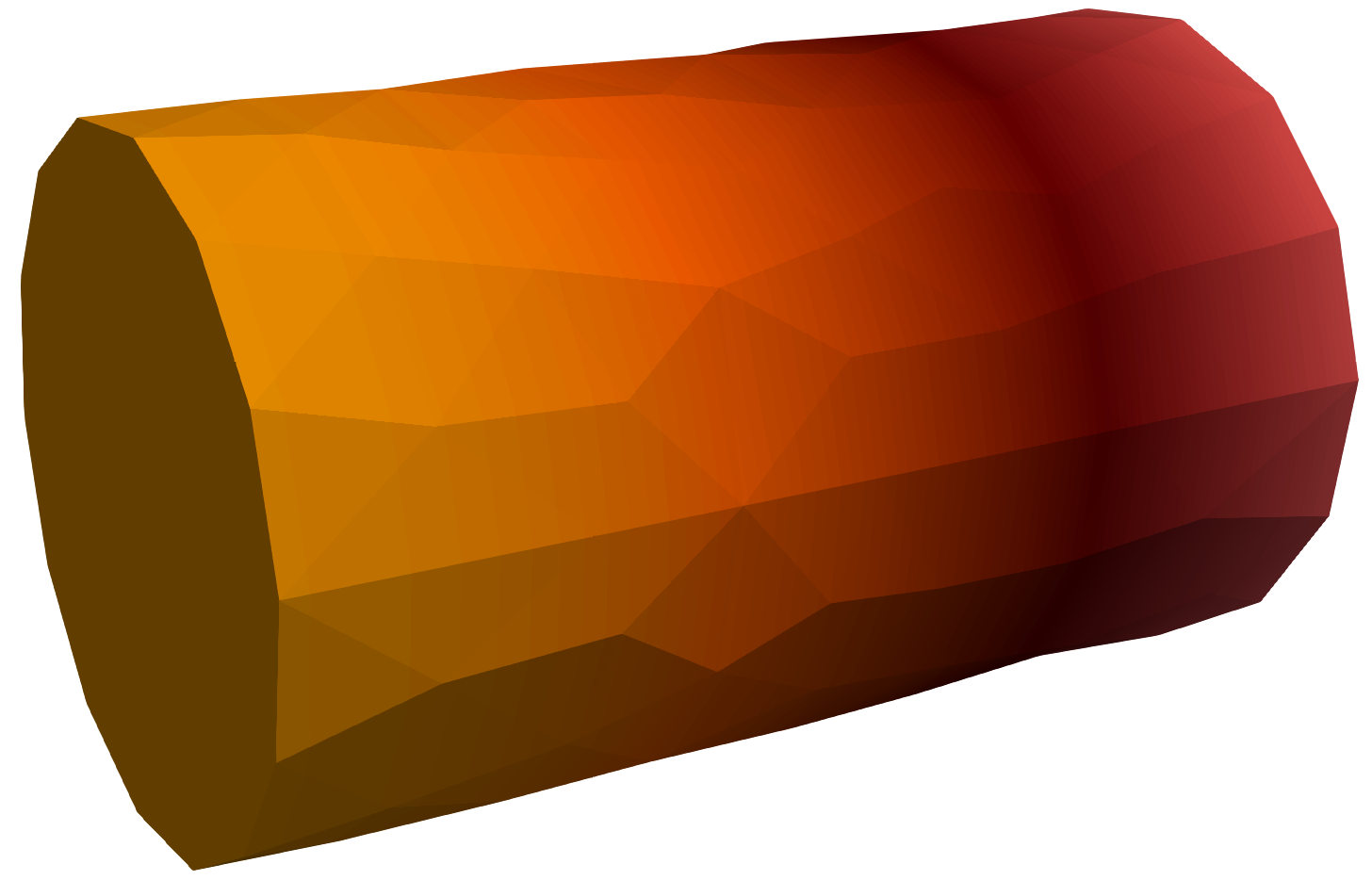}
\end{subfigure}
\hspace{15pt}
\begin{subfigure}{.2\textwidth}
	\centering
	\includegraphics[width=1.2\textwidth]{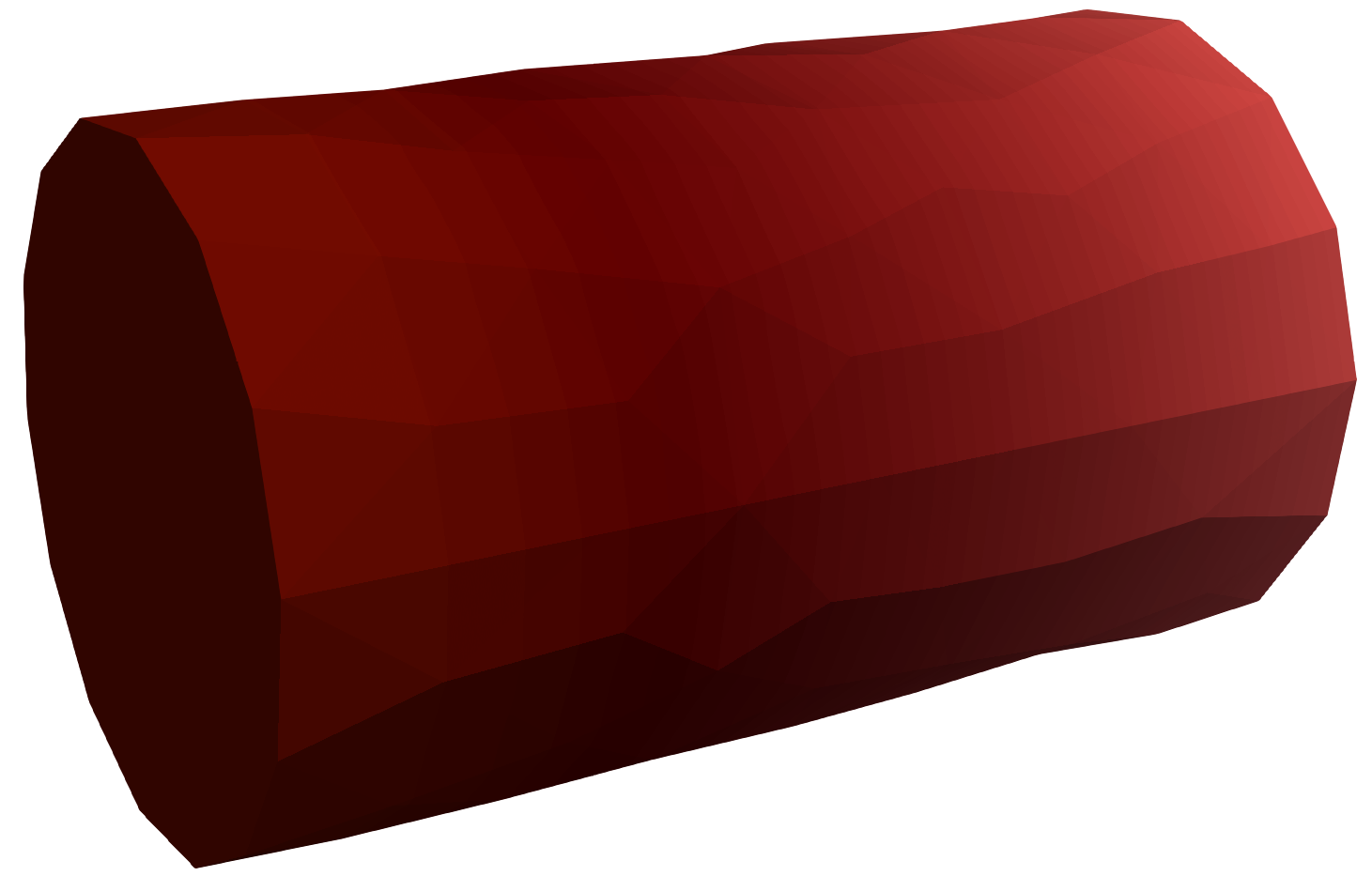}
\end{subfigure} 
\hspace{10pt}
\begin{subfigure}{.2\textwidth}
	\centering
	\vspace{10pt}
	\includegraphics[width=0.6\textwidth]{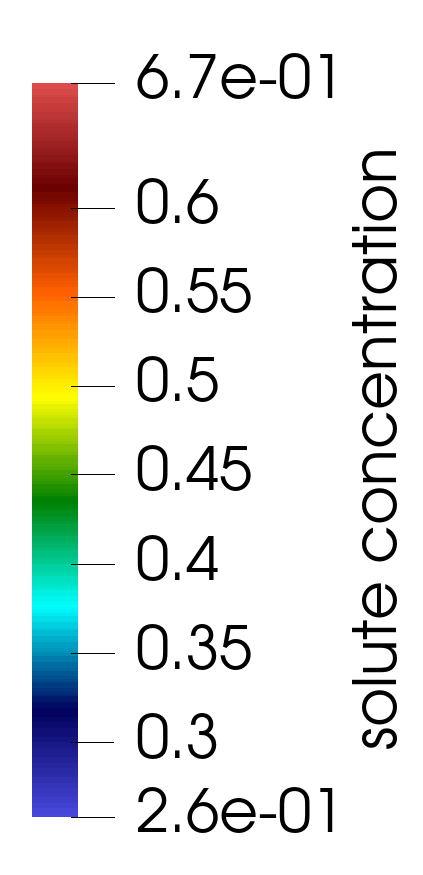}
\end{subfigure}\\
\begin{subfigure}{.2\textwidth}
	\centering
	\includegraphics[width=1.2\textwidth]{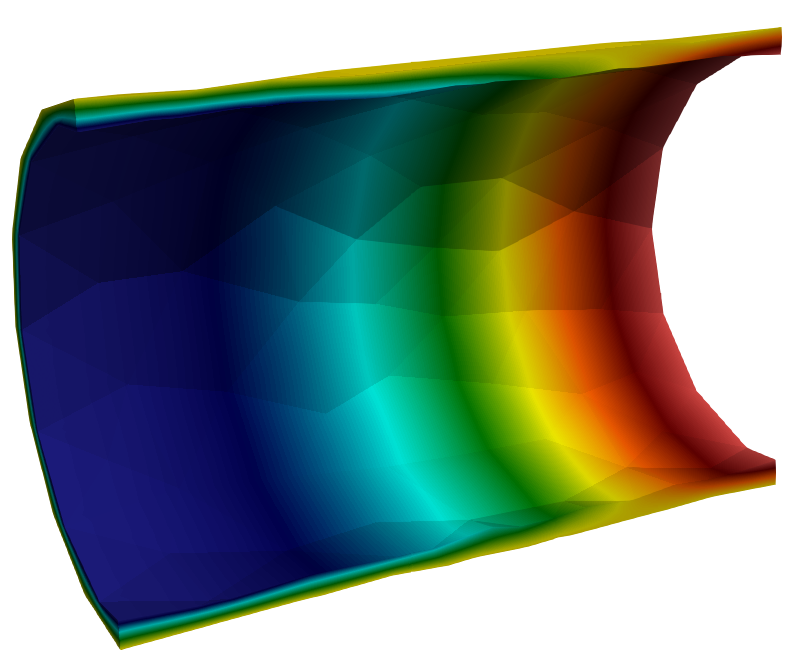}
\end{subfigure}
\hspace{15pt}
\begin{subfigure}{.2\textwidth}
	\centering
	\includegraphics[width=1.2\textwidth]{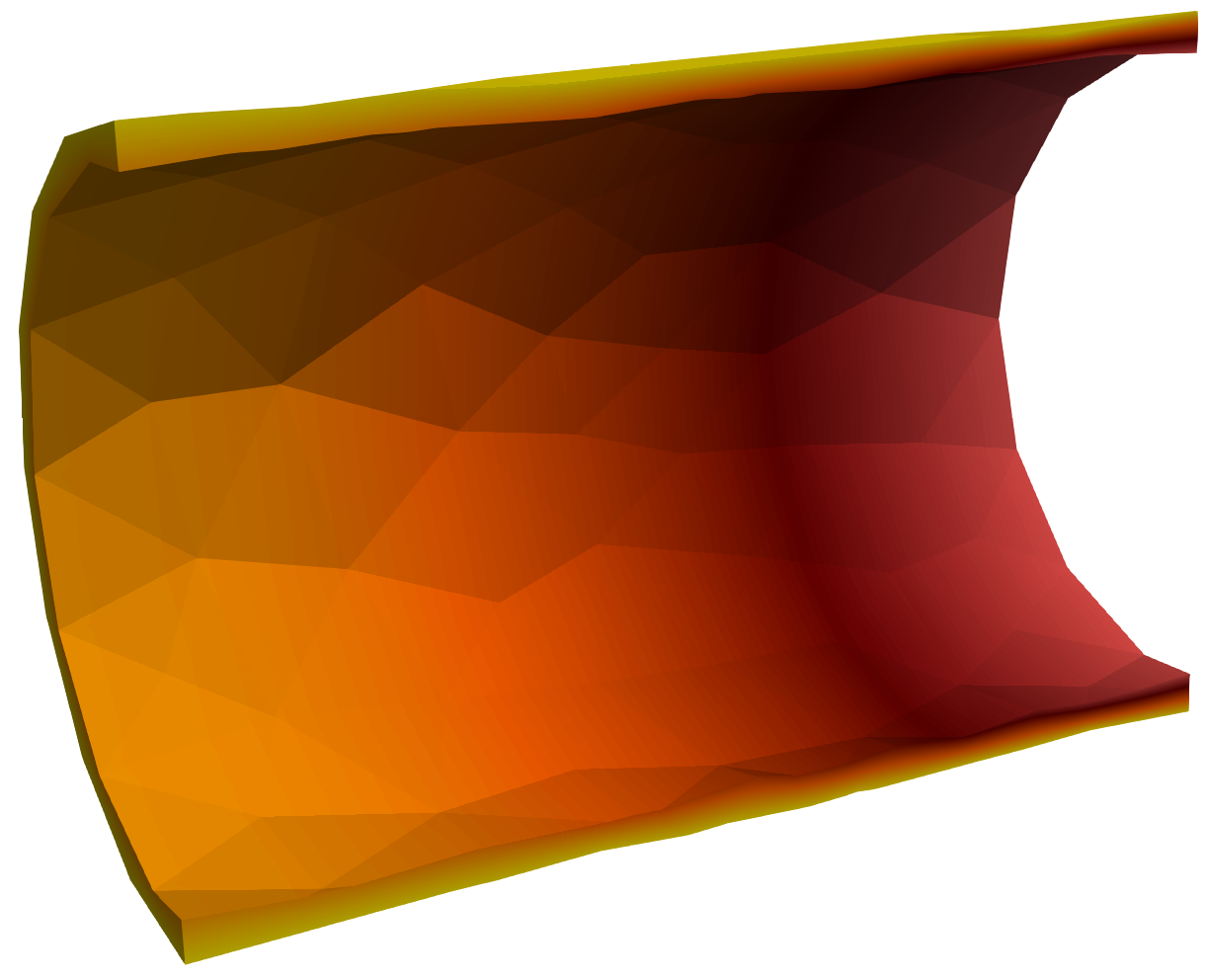}
\end{subfigure}
\hspace{15pt}
\begin{subfigure}{.2\textwidth}
	\centering
	\includegraphics[width=1.2\textwidth]{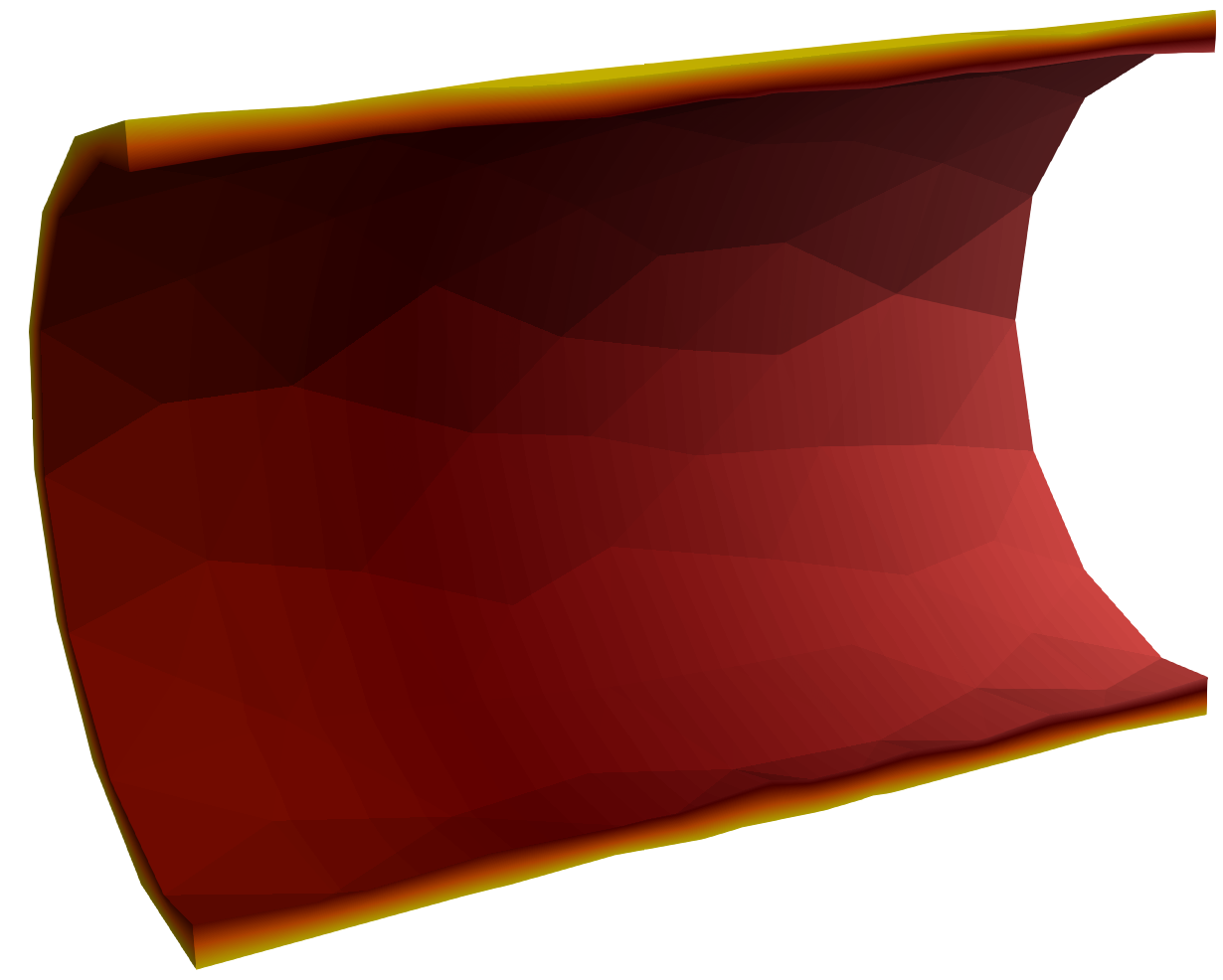}
\end{subfigure} 
\hspace{10pt}
\begin{subfigure}{.2\textwidth}
	\centering
	\includegraphics[width=0.6\textwidth]{Images/bar_fluid_wall_2.png}
\end{subfigure}
	\caption{\emph{Test case iii.} Fluid solute concentration (first and third rows) and wall solution concentration considering a Section of the wall domain (second and fourth rows) for three different time instants (columns), with $\zeta = 0.37$ (first two rows) and $\zeta = 0.67$ (second two rows).}
	\label{fig:snapshots_probl_3}
\end{figure}

Regarding the high-fidelity discretization, we define $T = 0.8~s$ (a cardiac beat in a real biological setting), $\Delta t  = 5\cdot 10^{-3}~s$ and apply a BDF scheme of order 1. We discretize differently $\Omega_f$ and $\Omega_w$, choosing  $h_f = 0.0863505~cm$ and $h_w = 0.0260143~cm$ so that $N_f = 144813$ and $N_w = 29624$ (see Fig. $\ref{fig:snapshots_probl_3}$ for some snapshots of the fluid and wall solution).  For the fluid model we consider as parameters the time variable and $\zeta \in [0.1,1]$, which described the solution constant concentration given to the tube in inlet, while for the wall model we consider a reduction in time and on the interface conditions, as usual. 

Again, first we evaluate the singular values decay related to the three set of snapshots varying the train set dimension $N_{train} = \{5, 10, 15, 20, 25\}$ of $\zeta$ according to a LHS distribution. As for test case \emph{ii}, given the time-dependent nature of the coupled problem, the corresponding set of snapshots has dimension $N_s = N_t N_{train}$, where $N_t = 160$ is the number of time steps considered for each simulation. The eigenvalues decays reported in Fig.$~\ref{Fig:singular_value_FD_mesh}$ show that $N_{train} = 15$ is enough to get a good reduction.
\begin{figure}[h!]
	\centering
	\begin{subfigure}{.32\textwidth}
		\centering
		\includegraphics[width=1.\textwidth]{./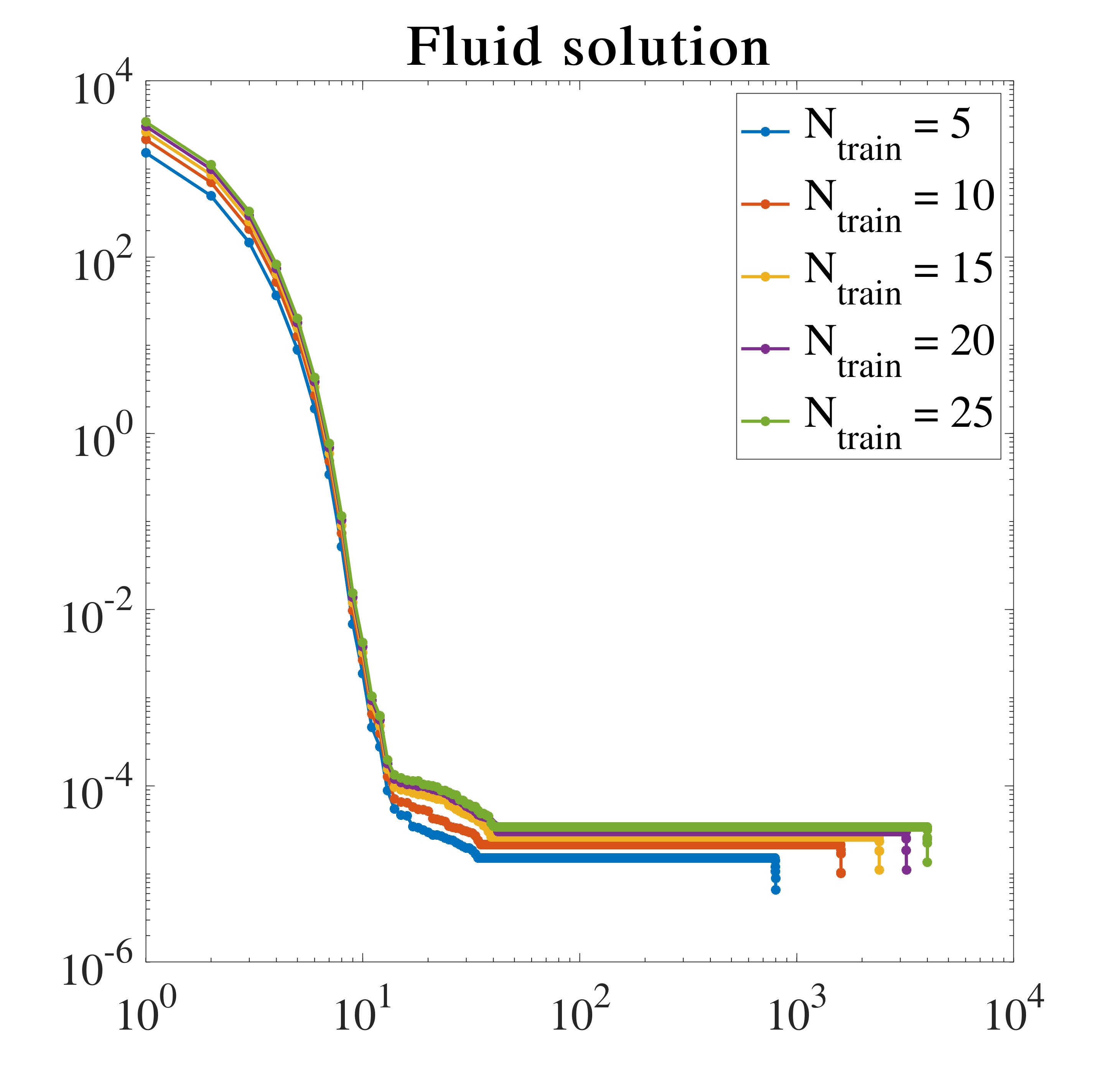}
	\end{subfigure}\hfill
	\begin{subfigure}{.32\textwidth}
		\centering
		\includegraphics[width=1.\textwidth]{./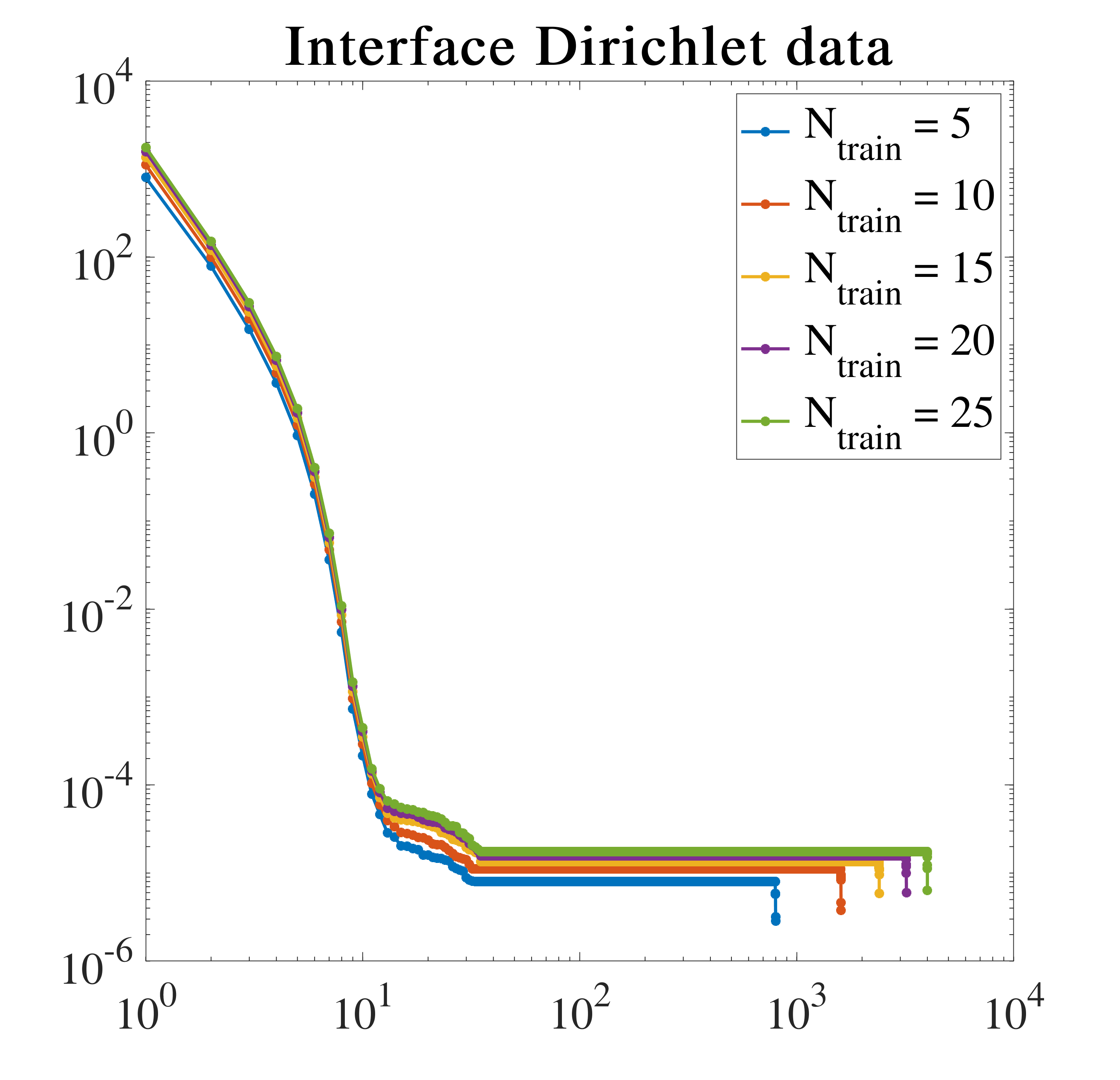}
	\end{subfigure}\hfill
	\begin{subfigure}{.32\textwidth}
		\centering
		\includegraphics[width=1.\textwidth]{./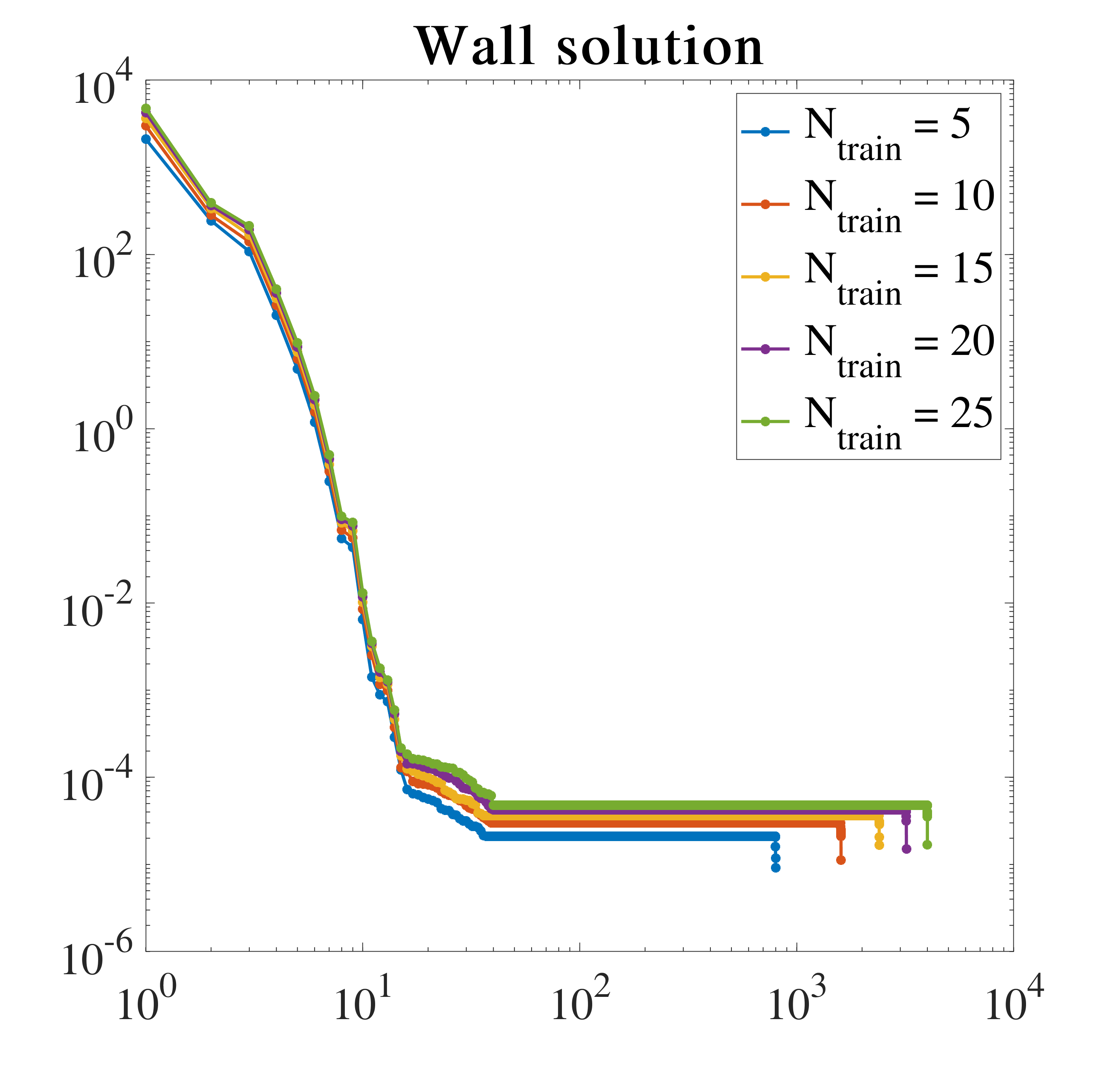}
	\end{subfigure}
	\caption{\emph{Test case iii.} Singular values decay of the fluid solution (left), interface Dirichlet data (center) and wall solution (right). }
	\label{Fig:singular_value_FD_mesh}
\end{figure}

Then, we select $N_{test} = 3$ values of $\zeta$ and we estimate the reduced error on the slave domain. Given the complexity of the fluid-wall problems and the high number of DoFs involved, the absolute mean 2-norm error over the wall solution could lead to an unbalance estimations. Therefore, in this case we rather use a relative mean 2-norm error, meaning that we consider the mean of the relative 2-norm errors 
$$\frac{\|  \mathbf{u}_{2,h_2}(\boldsymbol{\mu}_2) - \mathbb{V}_2  \mathbf{u}_{n_2}(\boldsymbol{\mu}_2)\| _2}{\|  \mathbf{u}_{2,h_2}(\boldsymbol{\mu}_2)\| _2}$$ computed for each reduced solution vector. Fig. $\ref{fig:FW_fixed_master_error}$ and $\ref{fig:FW_fixed_DEIM_error}$  outline the reduced errors obtained prescribing a fix accuracy of the fluid solution and the interface data reduction, respectively. Despite the higher complexity of this test case with respect to the previous ones, we highlight that the interface reduction does not impact on the final solution in terms of accuracy, as shown in test cases \emph{i} and \emph{ii}. In particular, prescribing a POD tolerance for the interface reduction between $10^{-3}$ and $10^{-5}$ does not change the final error on the slave solution, which, instead, is correctly influenced by the slave reduced solution. 

\begin{figure}[h!]
	\centering
	\begin{subfigure}{.3\textwidth}
		\centering 
		\includegraphics[width=1.2\textwidth]{./Images/FW_error_fix_master_1e-2.tex}
	\end{subfigure}\hspace{40pt}
	\begin{subfigure}{.3\textwidth}
		\centering 
		\includegraphics[width=1.2\textwidth]{./Images/FW_error_fix_master_1e-3.tex}
	\end{subfigure}\\
	\bigskip
	\begin{subfigure}{.3\textwidth}
		\centering 
		\includegraphics[width=1.2\textwidth]{./Images/FW_error_fix_master_1e-4.tex}
	\end{subfigure}\hspace{40pt}
	\begin{subfigure}{.3\textwidth}
		\centering 
		\includegraphics[width=1.2\textwidth]{./Images/FW_error_fix_master_1e-5.tex}
		\label{Fig:FW_fixed_master_5}
	\end{subfigure}
	\caption{\emph{Test case iii.} $L^2(\Omega_2)$ mean wall solution error ($y$-axis) over $N_{test}$ trial fixing the fluid POD tolerance and varying the DEIM tolerance ($x$-axis) and the wall POD tolerance (legend).}
	\label{fig:FW_fixed_master_error}
\end{figure}
\begin{figure}[h!]
	\centering
	\begin{subfigure}{.3\textwidth}
		\includegraphics[width=1.2\textwidth]{./Images/FW_error_fix_DEIM_1e-2.tex}
	\end{subfigure}\hspace{40pt}
	\begin{subfigure}{.3\textwidth}
		\includegraphics[width=1.2\textwidth]{./Images/FW_error_fix_DEIM_1e-3.tex}
	\end{subfigure}\\
	\bigskip
	\begin{subfigure}{.3\textwidth}
		\includegraphics[width=1.2\textwidth]{./Images/FW_error_fix_DEIM_1e-4.tex}
	\end{subfigure}\hspace{40pt}
	\begin{subfigure}{.3\textwidth}
		\includegraphics[width=1.2\textwidth]{./Images/FW_error_fix_DEIM_1e-5.tex}
	\end{subfigure}
	\caption{\emph{Test case iii.} $L^2(\Omega_2)$ mean wall solution error ($y$-axis) over $N_{test}$ trial fixing the DEIM tolerance and varying the fluid POD tolerance ($x$-axis) and the wall POD tolerance (legend).}
	\label{fig:FW_fixed_DEIM_error}
\end{figure}

Time performances for a fixed $10^{-5}$ POD tolerance for each reduction are instead reported in Fig. $\ref{Fig:FW_error_vs_time}$ and Table $\ref{Tab:FW_mesh_order}$. As for test case \emph{ii}, the computational costs refer to a complete simulation in time -- in this case, a 160 time-steps solution -- including necessary repeated operations such as the assembling of the reduced right hand side and of the full order solutions for both fluid and wall models. All this bottlenecks  can be eventually overcome considering, especially for the master reduction, a different assembling of the right hand side and hyper-reduction techniques  according to their complexity. Moreover, we point out the the gained speed up is of about 20 times, corresponding to a saving up of about 95\% of the computational costs of the wall simulations due to both the ROM strategy implemented and, more importantly, of the interface non conformity considered. This, with a saving up of 100\% of the interface extraction, ensures a 12 times speed up of the complete coupled problem solution, corresponding to a reduction of the 90\% of the CPU time compared to the FOM solution. Finally, we highlight that an accuracy of $10^{-5}$ in the slave solution requires only an increase of about 0.3\% of the total  cost of the solution of the same models imposing an accuracy of $10^{-5}$ on the master reduction, and of $10^{-2}$ on the slave and interface reduction, \emph{i.e.} passing from a final accuracy of $10^{-2}$ to $10^{-5}$ of the slave solution.

\begin{figure}[h!]
	\centering
	\begin{subfigure}{.3\textwidth}
		\includegraphics[width=1.2\textwidth]{./Images/FW_error_vs_time_fixed_master.tex}
	\end{subfigure}\hspace{40pt}
	\begin{subfigure}{.3\textwidth}
		\includegraphics[width=1.2\textwidth]{./Images/FW_error_vs_time_fixed_DEIM.tex}
	\end{subfigure}
	\caption{\emph{Test case iii.} $L^2(\Omega_2)$ mean wall solution error vs the CPU time fixing the fluid POD tolerance (left) and the interface DEIM tolerance (right) to $10^{-5}$ and varying the tolerances used for the reduction of the other quantities.}
	\label{Fig:FW_error_vs_time}
\end{figure}

\begin{table}[h!]
	\centering 
	\begin{tabular}{c| cc| cccc}
		\toprule
		& \multicolumn{2}{c| }{High fidelity model} &\multicolumn{4}{c}{Reduced order model} \\ %[0.5ex]
		&\#FE &FE solution &\#RB &Offline &Online &Speed up
		\\ &DoFs &time &&time &time \\
		\hline &&&&&&\\ [-2ex]
		Fluid model &144$k$ &$\sim$ $394.68s$ &7 &$\sim$ 5974$s$ &$\sim$ $68.62s$ &\cellcolor{red!25}5.7$x$\\
		Wall model &29$k$ &$\sim$ $250.49s$ &9 &$\sim$ 3884$s$ &$\sim$ $12.26s$ &\cellcolor{green!25}20.4$x$\\
		Interface data & &\cellcolor{red!25}$\sim$ $6m$ &7 &$\sim$ 5525$s$  & \cellcolor{green!25}0.00$s$&\\ 
		Coupled model & &$\sim$$945.17s$ & &$\sim$ 15383$s$ &$\sim$ $80.88s$ &\cellcolor{red!15}11.7$x$\\
		\bottomrule
	\end{tabular}
	\caption{\emph{Test case iii.} High fidelity and reduced order model dimensions and CPU times.  We highlight the performances of the ROM model with respect to the interface Dirichlet data treatment and the speed up using colors from red (worst) to green (best).}
	\label{Tab:FW_mesh_order}
\end{table}	

\section{Conclusion}
In this paper we have proposed a new approach to deal with parametric coupled PDEs. The method, based on RB algorithms, can be used in combination with domain decomposition techniques when one-way coupled problems must be solved independently from each other and in sequence, following the Dirichlet interface conditions direction. The efficiency of the coupled ROM is ensured by the modular nature of the proposed strategy, enabling the possibility to treat in very different ways the master and slave reduction and solution, including different FE degrees. In particular, the main building blocks of this method are the slave and master models, to be reduced with tailored RB strategies, and the interface Dirichlet data, which is treated and passed between the interface domains through DEIM, without applying other expensive techniques, such as, \emph{e.g.}, Lagrange multipliers. Special emphasis has been put in the importance of using DEIM to handle interface data between conforming and, more importantly, non-conforming interface grids.

\emph{A posteriori} reduced error estimates for the proposed method in both the steady and the unsteady cases have been derived, showing the strong relation between the slave error and the master and interface errors. 

Our numerical tests show that our reduction strategy can be applied to very different coupled problems. The efficiency in the coupled ROM online phase outperforms the high-fidelity counterpart, gaining an overall speed up in the complete coupled problem computation from 200 times for the most simple steady case to 12 times for the more complex ones. However, the biggest advantage in CPU time can be seen in the interface treatment, both in the saving up of the 100\% of the interface extraction time and in the general speed up obtained through the slave model reduction, which can ultimately ensure a saving up of the 95\% of CPU time also in the most complex cases, such as in the fluid-wall mass transport problem of Subsection $\ref{Sub:fluid_wall_mass_transport}$. The committed error can be carefully controlled at each step of the reduction with small influence, as a whole, of the accuracy imposed on the interface reduction. 

On the basis of the results obtained with simple partitioned one-way coupled problems, we expect to be able to apply the present strategy to more complex and relevant coupled problems. Moreover, a natural extension of the presented strategy concerns the use of other kinds of interface (e.g.,  Neumann-like) conditions, and the treatment of more challenging two-way coupled problems; both these aspects represent the focus of a forthcoming publication.

\section*{Acknowledgment} 
This project has received funding from the European Research Council (ERC)
under the European Union's Horizon 2020 research and innovation programme. Grant agreement No. 740132: iHEART - An Integrated Heart Model for the simulation of the cardiac function, P.I. Prof. A. Quarteroni. 

%\section*{Declarations}
%\textbf{Conflict of interest} The authors declare no competing interests.

\bibliography{Ref}

\begin{appendices}
\section{\emph{A posteriori} error estimator for unsteady reduced basis models}
\label{App:error_estimation}
To find an \emph{a posteriori} error estimates of a parametrized reduced simulation scheme for time-dependent model, according to \cite{Haasdonk2011}, we assume to have the following parametrized linear dynamical system for a vector $\mathbf{u}(t;\boldsymbol{\mu}) \in \mathbb{R}^n$, for $t \in [0,T]$:
\[ 
\begin{cases}\frac{d}{dt} \mathbf{u}(t;\boldsymbol{\mu}) = \mathbb{A}_{N}(t;\boldsymbol{\mu})\mathbf{u}(t;\boldsymbol{\mu}) + \mathbf{f}_{N}(t;\boldsymbol{\mu}) &\text{in }\Omega \times [0,T]\\
\mathbf{u}(0;\boldsymbol{\mu}) = \mathbf{u}_0(\boldsymbol{\mu}) &\text{on }\Omega \times \{0\}.
\end{cases}
\]
Here the matrix $\mathbb{A}_{N}(t;\boldsymbol{\mu})
\in \mathbb{R}^{N \times N}$ and vector $\mathbf{f}_{N}(t;\boldsymbol{\mu}) \in \mathbb{R}^{N}$, being $N$ the dimension of the reference finite dimensional space, are dependent from the parameters vector $\boldsymbol{\mu} \in \mathscr{P} \subset \mathbb{R}^d$. We assume to fix $\boldsymbol{\mu}$ for each single simulation of the dynamical system. Moreover, we define the projection matrix $\mathbb{V} \in \mathbb{R}^{N\times n}$ defined through RB methods, where $n \leq N$ is the reduced model order. Note that the error estimation technique here reported is not restricted on a particular choice of RB. Then, the reduced dynamical system is:
\begin{equation}
\begin{cases}
\frac{d}{dt} \mathbf{u}_n(t;\boldsymbol{\mu}) = \mathbb{A}_{n}(t;\boldsymbol{\mu})\mathbf{u}_n(t;\boldsymbol{\mu}) + \mathbf{f}_{n}(t;\boldsymbol{\mu}) &\forall t \in [0,T]\\
\mathbf{u}_n(0;\boldsymbol{\mu}) = \mathbf{u}_{n,0}(\boldsymbol{\mu})
\end{cases}
\end{equation}
where $\mathbb{A}_{n}(t;\boldsymbol{\mu}) = \mathbb{V}^T\mathbb{A}_{N}(t;\boldsymbol{\mu})\mathbb{V}$, $\mathbf{f}_{n}(t;\boldsymbol{\mu}) = \mathbb{V}^T\mathbf{f}_{N}(t;\boldsymbol{\mu}) \mathbb{V}$, $\mathbf{u}_n(t;\boldsymbol{\mu})$ is the reduced solution, \emph{i.e} $\mathbf{u}_N(t;\boldsymbol{\mu}) = \mathbb{V}\mathbf{u}_n(t;\boldsymbol{\mu})$, and $\mathbf{u}_{n,0}(\boldsymbol{\mu})$ is the projection on the reduced space of the initial condition $\mathbf{u}_{0}(\boldsymbol{\mu})$. 

The error analysis is residual based, thus first we define the error and the residual as
$$\mathbf{e}(t;\boldsymbol{\mu}) := \mathbf{u}(t;\boldsymbol{\mu}) - \mathbb{V}\mathbf{u}_n(t;\boldsymbol{\mu})$$
and 
$$\mathbf{r}(t;\boldsymbol{\mu}) := \mathbb{A}_N(t;\boldsymbol{\mu})\mathbb{V}\mathbf{u}_n(t;\boldsymbol{\mu}) + \mathbf{f}_{N}(t;\boldsymbol{\mu}) - \mathbb{V} \frac{d}{dt}\mathbf{u}_n(t;\boldsymbol{\mu}).$$ 

Furthermore, a suitable norm must be chosen. Assuming to have a symmetric positive definite inner product matrix $\mathbf{G} \in \mathbb{R}^{N \times N}$, we call the corresponding inner product $\langle \mathbf{u},\mathbf{u}' \rangle_{\mathbf{G}}$, so that we can compute the induced vector and matrix norm $\| \mathbf{u}\| _{\mathbf{G}} := \sqrt{\langle \mathbf{u},\mathbf{u}\rangle_{\mathbf{G}}}$ on $\mathbf{R}^{N}$ and $\| \mathbb{A}\| _{\mathbf{G}} := \text{sup}_{\| \mathbf{u}\| _{\mathbf{G}}}\| \mathbb{A}\mathbf{u}\| _{\mathbf{G}}$, for $\mathbb{A} \in \mathbb{R}^{N\times N}$. For example, if $\mathbf{G} = \mathbb{I}_{N \times N}$, \emph{i.e.} it is the identity matrix, than we obtain the simple 2-norm used in this work. Then, the following \emph{a posteriori} error estimator can be stated:
	
\begin{prop}[\emph{A Posteriori} Error Estimate] Assuming that $\mathbb{A}_N (t;\boldsymbol{\mu}) = \mathbb{A}(\boldsymbol{\mu})$ is time-invariant and has eigenvalues with negative real part for all $\boldsymbol{\mu} \in \mathscr{P}$, than the solution is bounded by
$$ \sup_t \| \exp(\mathbb{A}_N(\boldsymbol{\mu})t)\| _{\mathbf{G}} \leq C_1(\boldsymbol{\mu}),$$
where $C_1(\boldsymbol{\mu})$ is a computable constant.
Then, the following error estimates holds:
\begin{equation}
\label{Eq:general_error_estimate_unsteady}
\| \mathbf{u}(t;\boldsymbol{\mu}) - \mathbb{V}\mathbf{u}_n(t;\boldsymbol{\mu})\| _{\mathbf{G}} \leq C_1(\boldsymbol{\mu}) \left( \| \mathbf{e}(0;\boldsymbol{\mu})\| _{\mathbf{G}} + \int_0^T \| \mathbf{r}(\tau;\boldsymbol{\mu})\| _{\mathbf{G}} d \tau \right).
\end{equation}
\end{prop}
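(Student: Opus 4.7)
The plan is a textbook Duhamel-plus-stability argument. I would first derive the dynamics satisfied by the error $\mathbf{e}(t;\boldsymbol{\mu}) := \mathbf{u}(t;\boldsymbol{\mu}) - \mathbb{V}\mathbf{u}_n(t;\boldsymbol{\mu})$. Differentiating in time and substituting the FOM equation for $\dot{\mathbf{u}}$ together with the defining identity for the residual, $\mathbb{V}\dot{\mathbf{u}}_n = \mathbb{A}_N\mathbb{V}\mathbf{u}_n + \mathbf{f}_N - \mathbf{r}$, produces the linear inhomogeneous ODE
\[
\frac{d}{dt}\mathbf{e}(t;\boldsymbol{\mu}) = \mathbb{A}_N(\boldsymbol{\mu})\,\mathbf{e}(t;\boldsymbol{\mu}) + \mathbf{r}(t;\boldsymbol{\mu}),\qquad \mathbf{e}(0;\boldsymbol{\mu}) = \mathbf{u}_0(\boldsymbol{\mu}) - \mathbb{V}\mathbf{u}_{n,0}(\boldsymbol{\mu}).
\]

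Since the hypothesis makes $\mathbb{A}_N$ time-invariant, I would then invoke the variation-of-constants formula
\[
\mathbf{e}(t;\boldsymbol{\mu}) = \exp\bigl(\mathbb{A}_N(\boldsymbol{\mu})\,t\bigr)\mathbf{e}(0;\boldsymbol{\mu}) + \int_0^t \exp\bigl(\mathbb{A}_N(\boldsymbol{\mu})(t-\tau)\bigr)\,\mathbf{r}(\tau;\boldsymbol{\mu})\,d\tau,
\]
apply $\|\cdot\|_{\mathbf{G}}$ to both sides, and use the triangle inequality for integrals together with submultiplicativity of the induced matrix norm. The uniform stability bound $\|\exp(\mathbb{A}_N(\boldsymbol{\mu})s)\|_{\mathbf{G}} \leq C_1(\boldsymbol{\mu})$ for every $s \in [0,T]$ then factors out of both contributions; finally, extending the upper limit of integration from $t$ to $T$ — legitimate since the integrand is nonnegative — yields exactly the claimed estimate \eqref{Eq:general_error_estimate_unsteady}.

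The only step that is not essentially mechanical is establishing the existence, and as the statement insists the \emph{computability}, of the constant $C_1(\boldsymbol{\mu})$. A negative-real-part spectrum guarantees $\exp(\mathbb{A}_N s) \to 0$ as $s \to \infty$ in any norm, so the map $s \mapsto \|\exp(\mathbb{A}_N s)\|_{\mathbf{G}}$ is continuous on $[0,\infty)$, vanishes at infinity, and hence attains a finite maximum; this suffices for existence. To upgrade this to a genuinely computable quantity, I would rely on a Lyapunov argument: solving $\mathbb{A}_N^\top P + P\,\mathbb{A}_N = -Q$ for symmetric positive definite $P,Q$ and taking $\mathbf{G} = P$ yields $\|\exp(\mathbb{A}_N s)\|_P \leq \exp(-\lambda s)$ with $\lambda = \lambda_{\min}(Q)/(2\lambda_{\max}(P))$, so that one may take $C_1(\boldsymbol{\mu}) = 1$. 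Alternatively, a Jordan-form estimate furnishes an explicit polynomial-times-exponential bound. In either case the Duhamel skeleton of Steps 1–3 is unaffected and the proof concludes in the form \eqref{Eq:general_error_estimate_unsteady}.
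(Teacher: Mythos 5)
Your proposal is correct and follows essentially the same route as the paper: derive the error evolution system $\dot{\mathbf{e}} = \mathbb{A}_N\mathbf{e} + \mathbf{r}$ from the residual definition, write its solution by variation of constants, and bound it using $\sup_s\|\exp(\mathbb{A}_N(\boldsymbol{\mu})s)\|_{\mathbf{G}} \leq C_1(\boldsymbol{\mu})$. Your treatment is in fact slightly more careful than the paper's — you correctly put $t$ as the upper limit and $\exp(\mathbb{A}_N(t-\tau))$ as the kernel in the Duhamel formula (the paper writes $T$ in both places), and you supply a concrete justification (Lyapunov or Jordan-form) for the existence and computability of $C_1(\boldsymbol{\mu})$, which the paper simply asserts.
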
	
\begin{proof}
From the residual definition, we obtain that
$$\mathbb{V}\frac{d}{dt}\mathbf{u}_n(t;\boldsymbol{\mu}) = \mathbb{A}_N(\boldsymbol{\mu})\mathbb{V}\mathbf{u}_n(t;\boldsymbol{\mu})+\mathbf{f}_N(t;\boldsymbol{\mu}) - \mathbf{r}(t;\boldsymbol{\mu}).$$

Subtracting this equation from the original system, we get an evolution system for the error, namely
\begin{equation}
\label{Eq:dynamical_system_errors}
\begin{cases}\frac{d}{dt}\mathbf{e}(t;\boldsymbol{\mu}) = \mathbb{A}_N(\boldsymbol{\mu})\mathbf{e}(t;\boldsymbol{\mu}) + \mathbf{r}(t;\boldsymbol{\mu})\\
\mathbf{e}(0;\boldsymbol{\mu}) = \mathbf{u}_0(\boldsymbol{\mu}) - \mathbb{V} \mathbf{u}_{n,0}(\boldsymbol{\mu}).
\end{cases}
\end{equation}

Then, there exists an explicit solution of this linear system, which is
$$\mathbf{e}(t;\boldsymbol{\mu}) = \text{exp}(\mathbb{A}(\boldsymbol{\mu})t)\mathbf{e}(0;\boldsymbol{\mu}) + \int_{0}^T \exp(\mathbb{A}(\boldsymbol{\mu})(T-\tau))\mathbf{r}(\tau;\boldsymbol{\mu})d\tau.$$

The proof is concluded assuming the boundedness of $\| \exp(\mathbb{A}(\boldsymbol{\mu})s)\| _{\mathbf{G}} \leq C_1(\boldsymbol{\mu})$ for $s \in \mathbb{R}^{+}$.
\end{proof}

Error relations similar to  $\eqref{Eq:general_error_estimate_unsteady}$ can also be found  for time dependent systems, meaning when $\mathbb{A}_{N}(t;\boldsymbol{\mu})$ depends on time, by a suitable modification of $C_1(\boldsymbol{\mu})$. To do this, we first point out that the error evolution system $\eqref{Eq:dynamical_system_errors}$ holds also for time-variants systems. Then, integrating, we get
$$ \mathbf{e}(t;\boldsymbol{\mu}) = \mathbf{e}(0;\boldsymbol{\mu}) + \int_0^{T} \mathbb{A}(\tau;\boldsymbol{\mu})\mathbf{e}(\tau;\boldsymbol{\mu}) + \mathbf{r}(\tau;\boldsymbol{\mu})d\tau.$$

Denoting by $\boldsymbol{\Phi}(t) := \| \mathbf{e}(t;\boldsymbol{\mu})\| _{\mathbf{G}}$, $\boldsymbol{\alpha}(t):= \| \mathbf{e}(0;\boldsymbol{\mu})\| _{\mathbf{G}} + \int_0^T \|  \mathbf{r}(\tau;\boldsymbol{\mu})\| _{\mathbf{G}}d\tau$ and $\boldsymbol{\beta}(t) := \| \mathbb{A}(\tau;\boldsymbol{\mu})\| _{\mathbf{G}}$, we can obtain
$$\boldsymbol{\Phi}(t) \leq \boldsymbol{\alpha}(t) + \int_0^T \boldsymbol{\beta}(\tau)\boldsymbol{\Phi}(\tau)d\tau.$$

Moreover, assuming an upper bound $\| \mathbb{A}(t;\boldsymbol{\mu})\| _{\mathbf{G}} \leq C_3(\boldsymbol{\mu})$ for $t \in [0,T]$, $\boldsymbol{\mu} \in \mathscr{P}$, using the Gronwall inequality, we can write
\[
\begin{split}
\boldsymbol{\Phi}(t) &\leq \boldsymbol{\alpha}(t) + \int_0^T \boldsymbol{\alpha}(\tau)\boldsymbol{\beta}(\tau)\exp \left(\int_s^T \boldsymbol{\beta}(r)dr \right)d\tau \leq \boldsymbol{\alpha}(t)(1 + C_3(\boldsymbol{\mu})t \exp(C_3 t)).
\end{split}
\]
Then, equation $\eqref{Eq:general_error_estimate_unsteady}$ can be found denoting by $C_1 := 1 + C_3(\boldsymbol{\mu})T \exp(C_3 T)$.
\end{appendices}
\end{document}